\numberwithin{equation}{section}
\theoremstyle{plain}
\newtheorem{theorem}{Theorem}
\newtheorem{corollary}[theorem]{Corollary}
\newtheorem{lemma}[theorem]{Lemma}
\theoremstyle{definition}
\newtheorem{remark}[theorem]{Remark}
\newtheorem*{remark*}{Remark}
\newtheorem{observation}{Observation}
\numberwithin{theorem}{section}
\begin{document}
\title[Criteria for CP types]{Some criteria for circle packing types and combinatorial Gauss-Bonnet Theorem}
\author[B. Oh]{Byung-Geun Oh}
\address{Department of Mathematics Education, Hanyang University, 222 Wangsimni-ro, Seongdong-gu, Seoul 04763, Korea}
\email{bgoh@hanyang.ac.kr}

\date{\today}
\subjclass[2010]{Primary 52C15,  05B40, 05C10.}

\begin{abstract}
We investigate criteria for circle packing(CP) types of  disk triangulation graphs embedded into simply connected domains in $ \mathbb{C}$.
In particular, by studying combinatorial curvature and the combinatorial Gauss-Bonnet theorem involving boundary turns,
we show that a disk triangulation graph is CP parabolic if 
\[
 \sum_{n=1}^\infty \frac{1}{\sum_{j=0}^{n-1} (k_j +6)} = \infty,
 \]
where $k_n$ is the degree excess sequence defined by
\[
k_n = \sum_{v \in B_n} (\deg v - 6) 
\]
for  combinatorial balls $B_n$ of radius $n$ and centered at a fixed vertex. It is also shown that the simple random walk on a disk triangulation graph is recurrent if
\[
 \sum_{n=1}^\infty \frac{1}{\sum_{j=0}^{n-1} (k_j +6)+\sum_{j=0}^{n} (k_j +6)} = \infty.
 \]
These criteria are sharp, and generalize a conjecture by He and Schramm in their paper from 1995,
which was later proved by Repp in 2001. We also give several criteria for CP hyperbolicity, one of which generalizes a theorem of He and Schramm, and present a necessary and sufficient 
condition for CP types of layered circle packings generalizing and confirming a criterion given by Siders in 1998.
\end{abstract}

\maketitle

\section{Introduction}

Let $V$ be an index set, and suppose $P=\{ P_v : v \in V \}$ is an infinite circle packing of the plane; i.e., $P$  is a collection of infinitely many closed geometric disks 
$P_v \subset   \mathbb{C}$ such that if $u \ne v$,
then $P_u$ and $P_v$ have disjoint interiors. The \emph{contact graph}, which is also called the \emph{tangency graph} or the \emph{nerve}, of $P$ is the graph $G$ defined as follows: 
the vertex set of $G$ is nothing but the index set $V = V(G)$, 
and for distinct $u, v \in V$ an edge $[u, v]$ appears in the edge set $E= E(G)$  if and only if $P_u$ and $P_v$ intersect (Figure~\ref{F:cp}). Definitely $G$ is a planar graph describing the combinatorial pattern of $P$.

\begin{figure}[tbh]
\begin{center}
 \includegraphics[width=110mm]{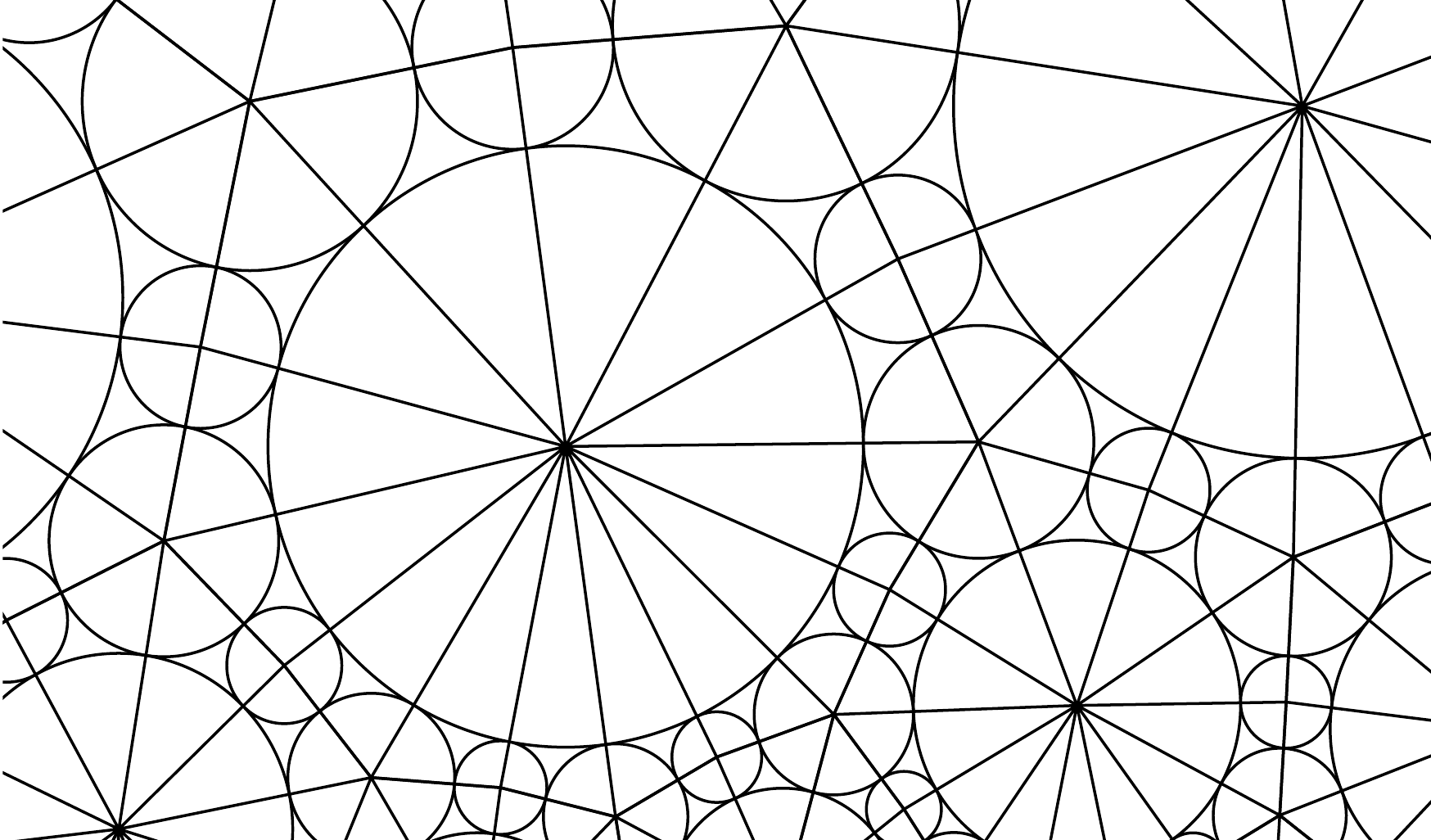}
 \caption{Part of a circle packing and its contact graph.}\label{F:cp}
\end{center}
\end{figure}

Now suppose that  a disk triangulation graph $G$ is given without any reference to circle packings.  (The term `disk triangulation graph'  will  always connote an \emph{infinite} graph 
throughout the paper. See Section~\ref{prelim} for details.) Then it is well known that
there exists a circle packing, unique up to M\"{o}bius transformations,  that  fills either the whole plane $  \mathbb{C}$ or the unit disk $\mathbb{D} := \{ z \in   \mathbb{C}: |z| <1 \}$,
and its contact graph is combinatorially equivalent to $G$ (cf.\;\cite[Corollary 0.5]{HS93}). Because M\"{o}bius transformations, or any other nonconstant analytic functions, 
cannot map the whole plane $  \mathbb{C}$ 
into the unit disk $\mathbb{D}$, the above statement  makes it possible 
to define the \emph{circle packing type} of a disk triangulation graph $G$;  i.e., we call $G$ \emph{circle packing parabolic} (CP parabolic for abbreviation) if there exists
a circle packing which fills the whole plane $ \mathbb{C}$ and whose contact graph is combinatorially equivalent to $G$, and we call $G$  \emph{circle packing hyperbolic} (CP hyperbolic)
if there exists a circle packing which fills the unit disk $\mathbb{D}$ and whose contact graph is combinatorially equivalent to $G$. The  question then arises as to under what condition
it is guaranteed that the given disk triangulation graph $G$ is CP parabolic or CP hyperbolic. There are many papers concerning this question 
\cite{BS91-To, Bo98, DW05, HS95, McC98, Oh15, Rep01, Si98, Wood09} 
(and more). Also see \cite{AHNR16, AHNR18, Asaf20} for probabilistic version of this problem. For those who want to study general theories about circle packings, we recommend
\cite{Ste03, Ste05}.  

\subsection{He and Schramm's criteria for circle packing types and the `gap'}

In \cite{HS95}, He and Schramm proved many criteria that determine whether a given disk triangulation graph is CP parabolic or CP hyperbolic. Among them, perhaps the following
is the most significant.

\begin{theorem}[He and Schramm]\label{HS1}
Let G be a disk triangulation graph. If $G$ is recurrent, then G is CP parabolic. 
Conversely, if   $G$ is transient and of bounded degree, then $G$ is CP hyperbolic. 
\end{theorem}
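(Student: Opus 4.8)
The plan is to translate the dichotomy between the two conformal types into the probabilistic dichotomy between recurrence and transience, using the circle-packing radii as a discrete conformal metric. Throughout I use the existence-and-uniqueness theorem quoted above (\cite[Corollary 0.5]{HS93}): every disk triangulation graph packs either $\mathbb{C}$ or $\mathbb{D}$, and these alternatives are mutually exclusive. Consequently it suffices, for each implication, to rule out the wrong type: to prove the first assertion I will show that a CP hyperbolic graph is transient, and to prove the second I will show that a CP parabolic graph of bounded degree is recurrent. Both reductions rest on expressing recurrence through vertex extremal length (equivalently, effective resistance) from a base vertex $v_0$ to infinity, recalling that recurrence is equivalent to this extremal length being infinite.

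First I would treat the converse, \emph{transient and bounded degree $\Rightarrow$ CP hyperbolic}, by proving its contrapositive: a bounded-degree CP parabolic graph is recurrent. Here I may invoke the Ring Lemma, which for maximum degree $\le \Delta$ forces the radii of adjacent circles to be comparable with a constant depending only on $\Delta$; hence combinatorial balls are comparable to Euclidean disks and the packing has ``bounded geometry.'' I then exhaust the plane by the dyadic annuli $A_k = \{\, z : 2^k \le |z| < 2^{k+1}\,\}$. Any path from $v_0$ to infinity must cross every $A_k$, and by bounded geometry the discrete crossing-resistance of the part of the triangulation lying in $A_k$ is bounded below by a universal positive constant (it behaves like crossing a fixed-modulus Euclidean annulus). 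Treating the vertex sets separating the annuli as successive cutsets and applying the Nash--Williams inequality (adding resistances in series) yields $R(v_0,\infty)\ge \sum_k c = \infty$, so $G$ is recurrent.

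Next I would prove \emph{recurrent $\Rightarrow$ CP parabolic} via its contrapositive, \emph{CP hyperbolic $\Rightarrow$ transient}; note this direction must avoid any degree hypothesis. Given a packing of $\mathbb{D}$ with radii $r_v$ and centers $z_v$, the disjointness of interiors gives the finite-area bound $\sum_{v} r_v^2 \le 1$. The idea is to use $m(v)=r_v$ as a conformal metric: it has finite total area, while a path of tangent circles running out to $\partial\mathbb{D}$ has definite length, since consecutive centers are at distance $r_u+r_v$ and the centers must travel a fixed Euclidean distance before escaping $\mathbb{D}$. Converting this metric into a finite-energy unit flow to infinity (dually, showing the vertex modulus is positive) then certifies transience. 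The delicate point is that, lacking the Ring Lemma, the energy estimate cannot be charged to the degree; it must be powered solely by the finite area $\sum_v r_v^2<\infty$, using planarity to control how the circles overlap the relevant regions.

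I expect this last step---the degree-free passage from finite packing area to a finite-energy flow, and more generally the precise equivalence between the vertex extremal length type and the domain ($\mathbb{C}$ versus $\mathbb{D}$)---to be the main obstacle. It is here that the naive comparison between vertex and edge extremal length, which costs a factor of the maximum degree, breaks down and must be replaced by the genuinely conformal vertex-extremal-length machinery in which the radii serve as the extremal metric. Once that geometric bridge is in place, both implications follow by eliminating the forbidden type through the uniqueness theorem.
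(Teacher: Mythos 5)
The paper does not prove Theorem~\ref{HS1}: it is imported from He and Schramm \cite{HS95} (see also \cite{McC98}), and the only related machinery the paper actually develops is the vertex extremal length toolkit of Theorem~\ref{list}. So I can only assess your outline against the known proof. Your architecture is the right one: use the mutually exclusive dichotomy of \cite{HS93} and prove each implication in contrapositive form. Your treatment of the bounded-degree direction (CP parabolic plus bounded degree implies recurrent, via the Ring Lemma, a uniform lower bound on the crossing resistance of dyadic annuli, and the series/Nash--Williams bound as in Theorem~\ref{T:Nash}) is exactly the standard argument and is sound as sketched; note that bounded degree is used twice there, once in the Ring Lemma and once to pass from a vertex-extremal-length crossing estimate to a resistance estimate.

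The genuine gap is in the degree-free direction, CP hyperbolic $\Rightarrow$ transient. The step you actually write down --- take $\mu(v)=r_v$, observe $\sum_v r_v^2\le 1$ and that every path to infinity has $\mu$-length bounded below --- certifies only a \emph{lower} bound $\mbox{\rm VEL}(v_0,\infty)\ge L^2/\mbox{\rm area}>0$, which is vacuous: $\mbox{\rm VEL}$ is a supremum over all admissible metrics, so hyperbolicity requires an estimate valid for \emph{every} metric (found by averaging over a family of curves in $\mathbb{D}$), or, dually, an explicit finite-energy unit flow. You flag this as ``the main obstacle,'' but it is the entire content of this direction, and the object you propose to convert (a single metric) sits on the wrong side of the duality. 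The standard degree-free fix is to normalize so that $P_{v_0}$ is centered at $0$ and route unit flow along the chains of circles crossed by radial segments of $\mathbb{D}$: the flow through $v$ is then at most the angular width of $P_v$, of order $r_v/|z_v|$, and the energy contributed at $v$ is at most $\bigl(\sum_{e\ni v}\theta(e)\bigr)\cdot\max_{e\ni v}\theta(e)\le (C\,r_v/|z_v|)^2$ \emph{independently of} $\deg v$; summing against $\sum_v r_v^2<\infty$ gives finite energy and hence transience by T.~Lyons' criterion \cite{LyT83}. Finally, your framing sentence that recurrence ``is equivalent to'' the vertex extremal length to infinity being infinite is false without a degree bound: recurrence is equivalent to infinite \emph{edge} extremal length (effective resistance), and only the implication ``recurrent $\Rightarrow$ VEL parabolic'' is degree-free. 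Indeed this paper itself exhibits, right after Theorem~\ref{T:S}, a disk triangulation graph that is CP parabolic (hence VEL parabolic) yet transient, so the equivalence you assert cannot be used as a black box in the degree-free direction.
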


Note that a graph is called either \emph{recurrent} or \emph{transient} if the simple random walk on it is recurrent or transient, respectively, and \emph{of bounded degree}
if the degrees of vertices (the number of edges incident to each vertices) are bounded above. Theorem~\ref{HS1} was also proved independently by McCaughan \cite{McC98} for disk triangulation
graphs of bounded degree.

He and Schramm also proved in the same paper \cite{HS95} the following theorem.
\begin{theorem}[He and Schramm]\label{HS2}
Let $G$ be a disk triangulation graph, $S_0$ a finite nonempty subgraph of $G$, and $g: \mathbb{N} \to (0, \infty)$ a nondecreasing function.
For a subgraph $S$, define $dS$ as the set of vertices in $G$ not belonging to $S$ but having neighbors in $S$.
\begin{enumerate}[(a)]
\item If $G$ is CP parabolic and satisfies the perimetric inequality 
\[
|dS| \geq g(|S|)
\]
for every  connected finite subgraph $S \supset S_0$, where $| d S |$ denotes the cardinality of  the set  $d S$ and $|S|$ denotes the cardinality of the vertex set of $S$, then
\begin{equation}\label{E1}
\sum_{n=1}^\infty \frac{1}{g(n)^2} = \infty.
\end{equation}
\item If \eqref{E1} holds and the perimetric inequality 
\[
|d B_n| \leq g(|B_n|) 
\]
holds  for every $n =0,1, 2, \ldots$, where $B_n$ is the combinatorial ball of radius $n$ and centered at a fixed vertex, then $G$ is CP parabolic.
\end{enumerate}
\end{theorem}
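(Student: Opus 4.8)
The plan is to transfer both parts from the geometry of circle packings to the purely combinatorial setting of \emph{vertex extremal length} (VEL), where the two perimetric hypotheses become one-sided estimates on a discrete length--area functional. For a nonnegative vertex weight $m:V(G)\to[0,\infty)$ set $\mathrm{area}(m)=\sum_v m(v)^2$, let the $m$-length of a path be the sum of $m$ over its vertices, and put $\mathrm{EL}(v_0,\infty)=\sup_m \mathrm{dist}_m(v_0,\infty)^2/\mathrm{area}(m)$, calling $G$ \emph{VEL-parabolic} when this supremum is infinite. The key structural input I would invoke is He and Schramm's equivalence between circle packing type and VEL type: a disk triangulation graph is CP parabolic if and only if it is VEL-parabolic. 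I use VEL rather than the simple random walk of Theorem~\ref{HS1} precisely because the present theorem allows \emph{unbounded} degree, where edge-cutset counts and vertex counts decouple and the random walk no longer records the right geometry. Throughout write $\Sigma_n=B_n\setminus B_{n-1}=dB_{n-1}$ for the combinatorial sphere of radius $n$.

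\textbf{Part (b).} Here I would exhibit an explicit metric certifying VEL-parabolicity: set $m(v)=1/|\Sigma_n|$ for $v\in\Sigma_n$. Then $\mathrm{area}(m)=\sum_n|\Sigma_n|\cdot|\Sigma_n|^{-2}=\sum_n 1/|\Sigma_n|$, while every path from $v_0$ to infinity meets each sphere $\Sigma_n$ and so has $m$-length at least $\sum_n 1/|\Sigma_n|$; hence $\mathrm{EL}(v_0,\infty)\ge\sum_n 1/|\Sigma_n|$. The upper perimetric bound gives $|\Sigma_n|=|dB_{n-1}|\le g(|B_{n-1}|)$, so it suffices to show $\sum_n 1/g(|B_{n-1}|)=\infty$. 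Writing $N_n=|B_n|$ and using $N_n-N_{n-1}=|\Sigma_n|\le g(N_{n-1})$ together with the monotonicity of $g$, each term satisfies $1/g(N_{n-1})\ge (N_n-N_{n-1})/g(N_{n-1})^2\ge\int_{N_{n-1}}^{N_n}dt/g(t)^2$, and summing telescopes to $\int_{N_0}^\infty dt/g(t)^2$, which diverges exactly when \eqref{E1} holds. Thus $\mathrm{EL}(v_0,\infty)=\infty$ and $G$ is CP parabolic.

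\textbf{Part (a).} I would argue by contraposition: assuming $\sum_n g(n)^{-2}<\infty$, I produce a unit flow to infinity of finite vertex energy, which certifies VEL-\emph{non}parabolicity and hence CP hyperbolicity, contradicting CP parabolicity. Build a nested exhaustion $S_0\subseteq S_1\subseteq\cdots$ by $S_{k+1}=S_k\cup dS_k$; since each $S_k$ is connected, finite, and contains $S_0$, the \emph{lower} perimetric hypothesis applies and yields $|S_{k+1}|-|S_k|=|dS_k|\ge g(|S_k|)$. Route a unit flow outward from $v_0$, splitting it essentially equally among the vertices of each shell $dS_k$ (routability being guaranteed by the local connectivity of a triangulation); the flow through a vertex of $dS_k$ is then $O(1/|dS_k|)$, so the vertex energy is $O\!\left(\sum_k 1/|dS_k|\right)\le O\!\left(\sum_k 1/g(|S_k|)\right)$. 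Because the sizes $|S_k|$ grow at least like the solution of $N'=g(N)$, a dyadic comparison of this series with $\int dt/g(t)^2$ shows the energy is controlled by $\sum_n g(n)^{-2}$, hence finite when \eqref{E1} fails, completing the contradiction.

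The genuine obstacle is neither of the two length--area computations but the passage between the geometric CP type and the combinatorial VEL type: this is the deep ingredient, where one must either cite He and Schramm or reprove that a filling packing of $\mathbb{C}$ forces VEL-parabolicity while a packing in $\mathbb{D}$, having finite total area, supplies a finite-energy admissible flow. A secondary technical point is to make the flow/metric duality for VEL precise enough that ``finite vertex energy of a unit flow'' really certifies non-parabolicity, and to execute the reparametrization between the shell index and the cardinality variable carefully, since the $|S_k|$ skip integers; note the asymmetry that part (b) telescopes cleanly in the lower-bound direction whereas part (a) needs a dyadic blocking to bound $\sum_k 1/g(|S_k|)$ from above when $g$ has large jumps. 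Finally, the borderline nature of $\sum_n g(n)^{-2}$ signals sharpness, and one should verify that the two one-sided perimetric hypotheses cannot be merged into a single two-sided bound without extra regularity on $g$.
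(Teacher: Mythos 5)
The paper does not prove Theorem~\ref{HS2} at all; it is quoted verbatim from He and Schramm \cite{HS95} and used as a black box (part (a) in the proof of Theorem~\ref{T:Hyp}, and Theorems~\ref{T:RoSull}/\ref{list} playing the role your part (b) argument reconstructs). So there is no in-paper proof to compare against, and your proposal must stand on its own. Your part (b) does: the metric $m(v)=1/|S_n|$ on the $n$-th sphere is exactly the Rodin--Sullivan metric (Theorem~\ref{T:RoSull}), and the telescoping comparison $1/g(N_{n-1})\ge\int_{N_{n-1}}^{N_n}dt/g(t)^2$ using $N_n-N_{n-1}=|dB_{n-1}|\le g(N_{n-1})$ and monotonicity of $g$ is correct; the only cosmetic issue is that when $\sum 1/|S_n|=\infty$ your metric has infinite area and is inadmissible, so you must truncate at $B_N$ and let $N\to\infty$ (or invoke Theorem~\ref{list}(c)).

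Part (a), however, has a genuine gap at its central step. You apply the lower perimetric bound only along the single exhaustion $S_{k+1}=S_k\cup dS_k$ and then assert that a unit flow can be routed so that each vertex of the shell $dS_k$ carries flow $O(1/|dS_k|)$, with ``routability guaranteed by the local connectivity of a triangulation.'' This is precisely the point where Thomassen-type theorems are hard, and it is false as stated: whether a flow can be spread evenly over consecutive shells depends on the bipartite adjacency structure between $dS_k$ and $dS_{k+1}$, and an imbalanced structure (many vertices of $dS_{k+1}$ seeing only one vertex of $dS_k$) forces concentration of flow far above $1/|dS_k|$. Ruling such bottlenecks out requires applying the hypothesis $|dS|\ge g(|S|)$ to connected sets $S$ \emph{other} than the $S_k$ --- indeed the full strength of ``for every connected finite $S\supset S_0$'' is exactly what distinguishes part (a) from a (false) converse of Nash--Williams, and your argument never uses it beyond the fixed exhaustion. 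He and Schramm's actual proof avoids flows entirely: from VEL-parabolicity they take an extremal metric $m$ of finite area with $\mathrm{dist}_m(v_0,\infty)=\infty$, apply the isoperimetric inequality to the sublevel sets $W_t=\{v:\mathrm{dist}_m(v_0,v)\le t\}$ for a continuum of $t$ (sets adapted to the metric, not chosen in advance), and run a length--area/Cauchy--Schwarz estimate to bound $\mathrm{dist}_m(v_0,\infty)$ by $\|m\|_V\cdot\bigl(\sum_n 1/g(n)^2\bigr)^{1/2}$ up to constants. A secondary, fixable issue is that even if your flow existed, ``finite vertex energy of a unit flow implies VEL-hyperbolic'' is a duality statement you would still need to establish, since the standard flow duality is for \emph{edge} extremal length.
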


Theorem~\ref{HS2}, or the aforementioned paper \cite{HS95} by He and Schramm, was published more than two and half decades ago, 
and since then there have been quite a few developments on the relationship 
between perimetric inequalities and type problems on graphs (about both CP types and recurrence/transience). For instance see~\cite[Chap.~6]{LP16} and references therein.

Using the two statements in Theorem~\ref{HS2}, He and Schramm proved  in \cite{HS95} the following theorem as well, which in fact has initiated our work in this paper.

\begin{theorem}[He and Schramm]\label{T:HSit}
Let $G$ be a disk triangulation graph.
\begin{enumerate}[(a)] 
\item If  at most finitely many vertices in $G$ have degrees greater than 6, then $G$ is CP parabolic and recurrent.
\item If we have the inequality
\begin{equation}\label{LAV}
\sup_{S_0} \inf_{S \supset S_0} \left( \frac{1}{|S|} \sum_{v \in S} \deg v \right) > 6,
\end{equation}
where $S_0$ and $S$ are nonempty connected finite subgraphs of $G$ and $\deg v$ denotes the degree of $v$, then $G$ is CP hyperbolic and transient.
\end{enumerate}
\end{theorem}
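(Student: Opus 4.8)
The plan is to derive both parts of Theorem~\ref{T:HSit} as consequences of Theorem~\ref{HS2}, by manufacturing the right nondecreasing function $g$ from the degree hypotheses and then invoking the appropriate perimetric inequality.

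For part (a), the key observation is that a disk triangulation graph behaves, combinatorially, like a triangulation of the plane, so Euler's formula forces the average degree to be $6$ asymptotically. Concretely, I would use a combinatorial Gauss--Bonnet type count: for a finite simply connected subcomplex, the relation between the number of boundary vertices and the total degree excess $\sum_{v}(\deg v - 6)$ is controlled. If all but finitely many vertices have degree at most $6$, then the degree excess accumulated over any large ball $B_n$ is bounded by a constant $C$ (coming from the finitely many high-degree vertices), which pins the boundary size $|dB_n|$ down to linear growth, i.e. $|dB_n| \le g(|B_n|)$ with $g(m) = a\sqrt{m}+b$ for suitable constants. Then $\sum 1/g(n)^2 \asymp \sum 1/n = \infty$, so Theorem~\ref{HS2}(b) yields CP parabolicity. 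Recurrence would follow from the same linear-in-$n$ isoperimetric control, since a planar triangulation whose balls grow no faster than quadratically (equivalently, whose boundaries grow linearly) supports a recurrent random walk by a standard Nash--Williams / flow argument; alternatively one cites that CP parabolic plus bounded boundary growth gives recurrence via the resistance estimate implicit in Theorem~\ref{HS2}(a).

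For part (b), I would argue by contraposition against Theorem~\ref{HS2}(a). Suppose $G$ were CP parabolic. The hypothesis \eqref{LAV} says that for some base $S_0$, every connected finite $S \supset S_0$ has average degree exceeding $6$ by a fixed amount $6+\delta$ with $\delta>0$. Summing $(\deg v - 6) \ge \delta |S|$ over $S$ and feeding this into the Gauss--Bonnet / Euler count that links degree excess to the boundary size, I expect to obtain a \emph{linear} perimetric lower bound $|dS| \ge g(|S|)$ with $g(m) \asymp m$. For such $g$ we have $\sum 1/g(n)^2 \asymp \sum 1/n^2 < \infty$, contradicting the conclusion \eqref{E1} of Theorem~\ref{HS2}(a). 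Hence $G$ cannot be CP parabolic, so it is CP hyperbolic; transience then follows from Theorem~\ref{HS1}, or directly from the superlinear volume growth forced by the same isoperimetric inequality.

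The main obstacle will be the second step in each part: translating the purely degree-theoretic hypotheses into genuine perimetric inequalities of the form required by Theorem~\ref{HS2}. This hinges on a clean combinatorial identity relating $\sum_{v \in S}(\deg v - 6)$ to the boundary data of $S$ for a simply connected subcomplex of a disk triangulation graph — precisely the combinatorial Gauss--Bonnet bookkeeping advertised in the abstract. The delicate point is handling boundary vertices correctly (interior faces are triangles, but the boundary contributes defect terms), and ensuring the inequality runs in the right direction: a \emph{lower} bound on average degree must yield a \emph{lower} bound on $|dS|$ for part (b), while a global \emph{upper} bound on excess must yield an \emph{upper} bound on $|dB_n|$ for part (a). Once this correspondence is established, the summability computations with $g$ are routine.
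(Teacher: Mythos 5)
The paper does not actually prove Theorem~\ref{T:HSit}; it is quoted from He and Schramm, who (as the paper notes) derived it from Theorem~\ref{HS2}. Your plan is therefore best compared with the paper's own generalizations, and it matches them closely. Part (b) of your proposal is exactly the proof of Theorem~\ref{T:Hyp} specialized to $g(m)=m$ (the paper observes that Corollary~\ref{C:useless} with $\alpha=1$ \emph{is} part (b)): the hypothesis \eqref{LAV} gives $\sum_{v\in S}(\deg v-6)\ge\delta|S|$ for all connected finite $S\supset S_0$, the Gauss--Bonnet bookkeeping of Section~\ref{S:T2} turns this into $\delta|S|\le|dS|$, and Theorem~\ref{HS2}(a) plus Theorem~\ref{HS1} finish. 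For part (a) the paper's Theorem~\ref{T1} is the relevant generalization: $k_n\le C$ gives $a_n\le(C+6)n$, hence $\sum 1/a_n=\sum 1/(a_n+a_{n+1})=\infty$, and one concludes directly from Rodin--Sullivan and Nash--Williams, which is cleaner than your detour through Theorem~\ref{HS2}(b). Three points to watch. First, your bound $|dB_n|\le a\sqrt{|B_n|}+b$ does not follow from $|S_{n+1}|=O(n)$ alone; you also need $|B_n|\gtrsim|S_n|^2$, which does hold because the increment bound $|S_{j+1}|-|S_j|\le k_j+6\le C+6$ forces each of the roughly $|S_n|/(2(C+6))$ spheres preceding $S_n$ to contain at least $|S_n|/2$ vertices. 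Second, the aside that transience in (b) follows ``directly from superlinear volume growth'' is not valid ($\mathbb{Z}^2$ has superlinear growth and is recurrent); use Theorem~\ref{HS1} as you first suggest, since recurrence implies CP parabolicity. Third, the entire weight of the argument rests on the combinatorial Gauss--Bonnet estimates you defer --- in the paper these are \eqref{spheresize}, \eqref{edgesize} and the inequality $6\cdot\tau_i(b_iW)\le|dS|$ --- and they are not automatic: one must pass to carefully pruned or filled-in subgraphs ($A_n$ in Section~\ref{S:T1}; $W$ and $Z$ in Section~\ref{S:T2}) to control multiplicities along the boundary walk. You correctly identify this as the crux, and the inequalities do run in the directions you need.
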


He and Schramm noted a wide gap between the two statements in Theorem~\ref{T:HSit}, and indicated that it would be interesting to narrow the gap. 
For this purpose they proposed the following conjecture, which was later proved by Repp \cite{Rep01}.

\begin{theorem}[Repp]\label{Rep}
Suppose $G$ is a disk triangulation graph of bounded degree. 
If the sequence
\begin{equation}\label{E:DES}
k_n = \sum_{v \in B_n} (\deg v - 6)
\end{equation}
is bounded, where $B_n$ is the combinatorial ball of radius $n$ and centered at a fixed vertex, then $G$ is CP parabolic.
\end{theorem}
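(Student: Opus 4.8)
The plan is to deduce CP parabolicity from Theorem~\ref{HS1} by showing that $G$ is recurrent, using a combinatorial Gauss--Bonnet identity to convert the bound on the degree excess $k_n$ into at most linear growth of the combinatorial spheres. Write $\sigma_n$ for the set of vertices at graph distance exactly $n$ from the fixed center, so that $B_n=\sigma_0\cup\cdots\cup\sigma_n$, $dB_n=\sigma_{n+1}$, and set $b_n=|\sigma_n|$. For $v\in\sigma_n$ let $o_v$ be the number of neighbors of $v$ lying in $\sigma_{n+1}$, and let $E_n$ be the set of edges joining $\sigma_n$ to $\sigma_{n+1}$, so that $|E_n|=\sum_{v\in\sigma_n}o_v$. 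The sets $E_n$ are pairwise disjoint edge cutsets separating the center from $\infty$, so once I control $|E_n|$ the classical Nash--Williams criterion will close the argument.

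First I would record the local structure forced by the hypotheses: since $G$ is a disk triangulation graph of bounded degree, each ball $B_n$ is a combinatorial disk whose boundary $\sigma_n$ is a simple cycle, and for $n\ge 1$ the region between $\sigma_n$ and $\sigma_{n+1}$ is a triangulated annulus with no interior vertex. A short Euler-characteristic count on such an annulus (boundary cycles of lengths $b_n$ and $b_{n+1}$, no interior vertex) gives $|E_n|=b_n+b_{n+1}$. Then comes the heart of the matter, the Gauss--Bonnet step: applying the combinatorial Gauss--Bonnet theorem to the triangulated disk $B_n$ — assigning curvature $6-\deg v$ to interior vertices and a boundary turn to each $v\in\sigma_n$ — and rewriting the turn at $v$ through $\deg v=(t_v+1)+o_v$, where $t_v$ is the number of triangles of $B_n$ at $v$, I expect the identity
\[
\sum_{v\in\sigma_n}(o_v-2)=6+k_n,
\qquad\text{equivalently}\qquad
|E_n|=2b_n+6+k_n .
\]
Comparing this with $|E_n|=b_n+b_{n+1}$ yields the linear recursion $b_{n+1}=b_n+6+k_n$ for $n\ge1$. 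Since $\{k_n\}$ is bounded, say $|k_n|\le K$, telescoping gives $b_n\le b_1+(6+K)(n-1)=O(n)$, hence $|E_n|=2b_n+6+k_n=O(n)$ as well.

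To finish, $\sum_n 1/|E_n|\ge c\sum_n 1/n=\infty$, so by Nash--Williams the effective resistance from the center to infinity is infinite, $G$ is recurrent, and Theorem~\ref{HS1} gives that $G$ is CP parabolic. One could alternatively aim at Theorem~\ref{HS2}(b): the estimate $|dB_n|=b_{n+1}=O(n)$ repackages the boundary growth as a perimetric inequality $|dB_n|\le g(|B_n|)$ for a suitable nondecreasing $g$ with $\sum 1/g(n)^2=\infty$, although the recurrence route above is cleaner because it needs only the \emph{upper} bound on $|E_n|$ and sidesteps any lower bound on $|B_n|$.

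The main obstacle I anticipate is not the arithmetic but the combinatorial bookkeeping behind the two displayed identities: one must justify that the spheres $\sigma_n$ are genuine simple cycles and that the inter-sphere regions are honest triangulated annuli, and must handle the boundary-turn term carefully, including degenerate configurations and the small-$n$ cases where $B_n$ is not yet a disk. This is precisely where the bounded-degree hypothesis enters, keeping the local geometry uniform so the Gauss--Bonnet accounting is valid; pinning down exactly how much of this regularity the accounting truly requires is what eventually allows the hypothesis to be relaxed in the sharper criteria of this paper.
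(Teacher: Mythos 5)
Your overall strategy coincides with the paper's (the paper obtains this statement as a special case of Theorem~\ref{T1}, whose proof likewise runs a combinatorial Gauss--Bonnet identity into Nash--Williams and Rodin--Sullivan), and your arithmetic is consistent with the paper's identity \eqref{degreex} in the case $m=1$. The genuine gap is the structural claim you lean on: that each $B_n$ is a combinatorial disk whose boundary $\sigma_n$ is a simple cycle, and that the region between $\sigma_n$ and $\sigma_{n+1}$ is a chord-free triangulated annulus with no interior vertices. This is false in general, and the bounded-degree hypothesis does not rescue it. Balls in a disk triangulation graph can fail to be simply connected, the boundary walk of $B_n$ can repeat vertices, vertices of $S_n$ can lie in the topological interior of $D(B_n)$, and edges joining two vertices of $S_{n+1}$ can occur as chords of the putative annulus. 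All of this happens as soon as the distance function $v\mapsto d(v_0,v)$ has a local maximum (a nonempty cut locus), which bounded degree together with bounded $k_n$ does not exclude: one only needs a few vertices of degree less than $6$ near the degeneracy, compensated by high-degree vertices elsewhere. What actually forces the spheres to be simple cycles is the much stronger hypothesis $\deg v\ge 6$ everywhere (see Section~\ref{S:des}), which is not assumed here.

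Consequently your two displayed \emph{identities} cannot both be asserted; what survives are the one-sided inequalities $|S_{n+1}|-|S_n|\le k_n+6$ and $|E_n|\le 2|S_n|+k_n+6$, and proving these without the clean structure is precisely the content of most of Section~\ref{S:T1}: one replaces $B_n$ by its main body $A_n$, establishes Observations~\ref{O1}--\ref{O6} about the boundary cycles $\Gamma_1,\dots,\Gamma_m$, proves $|S_n|\ge |bA_n|-(m-1)$ (Lemma~\ref{L:S_n}), and applies GBF-1 to $A_n$ with the multiply-connected correction $6m-12$ absorbed into the estimates. Your final conclusion would indeed follow from the inequalities alone, since you only ever use upper bounds on $b_n$ and $|E_n|$, and your preference for the Nash--Williams route over Theorem~\ref{HS2}(b) is correct because $|B_n|$ may grow only linearly. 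So the skeleton is sound, but the step you defer as ``combinatorial bookkeeping'' and attribute to bounded degree is the actual mathematical content of the proof, and as written it is unjustified.
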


The sequence $k_n$ will be called the \emph{degree excess} or \emph{valence excess} sequence as in \cite{Rep01}.  
The main purpose of this paper is  to narrow  the gap between  the statements in Theorem~\ref{T:HSit}  further; 
for example we wondered if we could replace the boundedness condition  of the sequence $k_n$ 
in  Theorem~\ref{Rep} by some other conditions, or if we could find a condition that is weaker than \eqref{LAV} but still guarantees CP hyperbolicity of a given disk triangulation graph. 

We first have the following theorem,  which generalizes Theorem~\ref{Rep}.

\begin{theorem}\label{T1}
 Suppose $G$ is a disk triangulation graph, and for each $n \in \mathbb{N}$ let $B_n$ be the combinatorial ball of radius $n$ and centered at a fixed vertex in $G$. 
 Let  $k_n$ be the degree excess sequence defined in \eqref{E:DES}, and let $a_n = \sum_{j=0}^{n-1} (k_j +6)$ for $n =1,2, \ldots$. Then $G$ is CP parabolic if 
 \begin{equation}\label{Reci}
 \sum_{n=1}^\infty \frac{1}{a_n} = \infty,
 \end{equation}
 and $G$ is recurrent if
  \begin{equation}\label{Reci2}
 \sum_{n=1}^\infty \frac{1}{a_n+a_{n+1}} = \infty.
 \end{equation}
 \end{theorem}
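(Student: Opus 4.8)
The plan is to funnel both statements through a single combinatorial identity supplied by the Gauss–Bonnet theorem: that $a_n$ is nothing but the length of the $n$-th boundary circle. Write $L_n=|\partial B_n|$ for the number of vertices at distance exactly $n$ from the fixed center, so that $\partial B_n=B_n\setminus B_{n-1}$. I would first prove $a_n=L_n$ for all $n\ge 1$. Assigning the angle $\pi/3$ to every corner of every triangle, the interior vertices of the triangulated disk $B_n$ are exactly those of $B_{n-1}$, each carrying combinatorial curvature $\tfrac{\pi}{3}(6-\deg v)$; hence Gauss–Bonnet gives total boundary turn
\[
T_n=2\pi+\tfrac{\pi}{3}\,k_{n-1}=\tfrac{\pi}{3}(k_{n-1}+6).
\]
Reading the same turn vertex-by-vertex along the cycle $\partial B_n$ (each boundary vertex contributing $\pi-\tfrac{\pi}{3}t_v$, with $t_v$ the number of its triangles lying in $B_n$), together with Euler's formula on the triangulated annulus between $\partial B_n$ and $\partial B_{n+1}$, yields the increment relation $L_{n+1}-L_n=k_n+6$. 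Telescoping from the base value $L_1=\deg(\text{center})=k_0+6$ then gives $a_n=\sum_{j=0}^{n-1}(k_j+6)=L_n$.

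For recurrence the identity makes the edge boundary explicit. The edges leaving $B_n$ are exactly those joining $\partial B_n$ to $\partial B_{n+1}$, and the annulus count above shows there are $L_n+L_{n+1}=a_n+a_{n+1}$ of them. These cutsets are pairwise disjoint and each separates the center from infinity, so the Nash–Williams criterion bounds the effective resistance from below by $\sum_n 1/(a_n+a_{n+1})$; when this diverges the resistance to infinity is infinite and $G$ is recurrent, which is precisely \eqref{Reci2}.

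For CP parabolicity I would invoke Theorem~\ref{HS2}(b). Here $|dB_n|=|\partial B_{n+1}|=a_{n+1}$, while the volume $V_n:=\sum_{j=0}^{n}L_j$ satisfies $V_{n+1}-V_n=L_{n+1}=a_{n+1}$. Taking $g$ to be the step function equal to $a_{n+1}$ on $[V_n,V_{n+1})$ produces a majorant with $g(V_n)\ge a_{n+1}$, and
\[
\sum_m \frac{1}{g(m)^2}=\sum_n \frac{V_{n+1}-V_n}{a_{n+1}^2}=\sum_n \frac{1}{a_{n+1}},
\]
so hypothesis \eqref{Reci} becomes exactly the divergence \eqref{E1} demanded by Theorem~\ref{HS2}(b), forcing $G$ to be CP parabolic.

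I expect the obstacle to be twofold. First, the identity $a_n=L_n$ relies on combinatorial regularity of the balls—that each $\partial B_n$ is a simple cycle, that $B_n$ is a triangulated disk with interior vertex set $B_{n-1}$, and that consecutive circles bound a genuine triangulated annulus—so the angle and edge counts, chords, degenerate small radii, and the base case $e_0=L_1$ (a wheel, not an annulus) must all be handled before the formula is legitimate. Second, the clean use of Theorem~\ref{HS2}(b) above secretly assumed $a_n$ is nondecreasing, so that the step function $g$ is itself nondecreasing; if $L_n=a_n$ oscillates, its least nondecreasing majorant can inflate $\sum 1/g^2$ into convergence even when $\sum 1/a_n$ diverges. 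To make the parabolic half robust I would instead run a discrete length–area argument directly: the metric assigning weight $1/a_n$ to each vertex of $\partial B_n$ has area $\sum_n L_n\,a_n^{-2}=\sum_n 1/a_n$, while every path from the center to infinity has $\rho$-length at least $\sum_n 1/a_n$, so the vertex extremal length from the center to infinity is at least $\sum_n 1/a_n$; its divergence forces CP parabolicity with no monotonicity assumption, and specializes correctly (for the flat hexagonal triangulation $a_n=6n$, recovering both known conclusions).
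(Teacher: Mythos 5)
Your overall strategy is the same as the paper's: use the combinatorial Gauss--Bonnet theorem to convert the degree excess $k_n$ into information about the sizes of the combinatorial spheres $|S_n|$ and the edge cut-sets $|E_n|$, then conclude CP parabolicity via a length--area/Rodin--Sullivan argument and recurrence via Nash--Williams. Your instinct to abandon Theorem~\ref{HS2}(b) because of the monotonicity problem with $g$ and to run the extremal-length argument directly is also sound, and matches what the paper does (it simply invokes Theorem~\ref{T:RoSull}). However, there is a genuine gap: the identities you build everything on, $a_n=|S_n|$ and $|E_n|=a_n+a_{n+1}$, are \emph{false} for a general disk triangulation graph. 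They hold precisely when the function $v\mapsto d(v_0,v)$ has no local maxima (no cut locus; see Theorem~\ref{degree6}), e.g.\ when $\deg v\ge 6$ everywhere. In general $B_n$ need not be simply connected, the boundary walk $bB_n$ need not be a simple cycle, vertices of $S_n$ can repeat on the boundary walk or fail to lie on it at all, and the region between consecutive spheres need not be an annulus --- so the ``triangulated disk/annulus'' picture on which your angle and Euler counts rest simply does not apply. You flag this as an ``obstacle\dots to be handled,'' but handling it is not a routine regularization: it is the actual content of the paper's proof (Observations~\ref{O1}--\ref{O6}, the passage from $B_n$ to the pruned graph $A_n$, and Lemma~\ref{L:S_n}).

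What saves the theorem is that only one-sided inequalities are needed, namely $|S_n|\le a_n$ and $|E_n|\le a_n+a_{n+1}$, and the paper shows that all the corrections coming from multiple boundary components and non-simple boundary cycles point in the favorable direction: Gauss--Bonnet applied to $A_n$ gives $\sum_{v\in B_n}(\deg v-6)=6m-12+(\mbox{extra edges})$ where $m\ge 1$ is the number of complementary components, Lemma~\ref{L:S_n} gives $|S_n|\ge |bA_n|-(m-1)$, and combining these yields $|S_{n+1}|-|S_n|\le k_n+6$ with the slack term $-5m+11\le 6$. Your proposal asserts the clean equalities without proving them and without establishing the inequalities in their place, so as written the argument only covers the no-cut-locus case. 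To repair it you would need to either restrict to that case (which does not prove the theorem) or reproduce the pruning-and-bookkeeping analysis of Section~\ref{S:T1}.
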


The sequence $a_n= \sum_{j=0}^{n-1}(k_j +6)$ in Theorem~\ref{T1} always satisfies the inequality $a_n \geq 3$ for every $n$, because it is a basic property satisfied by any degree excess sequence $k_n$ 
of disk triangulation graphs. See Remark~\ref{rm1} for details. An immediate corollary of Theorem~\ref{T1} is the following.

\begin{corollary}\label{C1}
Let $G$ and $B_n$ be as above. If $$k_n = \sum_{v \in B_n} (\deg v - 6) \leq c \ln n$$ for sufficiently large $n$, where $c$ is a fixed positive constant, then $G$ is CP parabolic and recurrent.
 \end{corollary}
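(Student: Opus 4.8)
The plan is to deduce both conclusions directly from Theorem~\ref{T1} by showing that the hypothesis $k_n \leq c \ln n$ forces the partial sums $a_n$ to grow no faster than a constant multiple of $n \ln n$, which is slow enough that the series \eqref{Reci} and \eqref{Reci2} both diverge.

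First I would fix $N$ so large that $k_j \leq c \ln j$ holds for all $j \geq N$, and split the sum defining $a_n = \sum_{j=0}^{n-1}(k_j + 6)$ into the initial block $\sum_{j=0}^{N-1}(k_j+6)$, which is a constant $C_0$ independent of $n$, and the tail $\sum_{j=N}^{n-1}(k_j + 6) \leq \sum_{j=N}^{n-1}(c \ln j + 6)$. Since $\ln$ is increasing, the tail sum is bounded by the integral $\int_1^n \ln x \, dx = n \ln n - n + 1$ together with the linear term $6n$, giving a bound of the form $a_n \leq C_0 + c\,n \ln n + 6n$. As $n \ln n$ dominates the other terms, I obtain $a_n \leq C' n \ln n$ for some constant $C'$ and all sufficiently large $n$.

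With this bound in hand, the parabolicity assertion follows at once: since $a_n \leq C' n \ln n$, we have $1/a_n \geq (C')^{-1}/(n \ln n)$, and the classical series $\sum_n 1/(n \ln n)$ diverges (by the integral test, as $\int dx/(x \ln x) = \ln \ln x \to \infty$). Hence \eqref{Reci} holds and $G$ is CP parabolic by Theorem~\ref{T1}. For recurrence I would note that $a_{n+1} \leq C'(n+1)\ln(n+1) \leq 2C' n \ln n$ for large $n$, so that $a_n + a_{n+1} \leq 3C' n \ln n$; the same comparison with $\sum_n 1/(n \ln n)$ then forces the divergence of \eqref{Reci2}, whence $G$ is recurrent by Theorem~\ref{T1}.

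There is no serious obstacle here---this is precisely the sense in which the statement is an \emph{immediate} corollary of Theorem~\ref{T1}. The only point requiring a little care is the bookkeeping of the finitely many initial terms $k_0, \ldots, k_{N-1}$, which may be large but contribute only the bounded additive constant $C_0$ to $a_n$ and therefore affect neither the $O(n \ln n)$ growth rate nor the divergence of the two series.
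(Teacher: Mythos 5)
Your proof is correct and follows essentially the same route as the paper: bound $a_n$ by a constant multiple of $n\ln n$ (the paper uses the elementary inequality $\sum_{j=1}^{n}\ln j\leq n\ln n$ where you use the integral test, an immaterial difference) and then invoke the divergence of $\sum 1/(n\ln n)$ together with Theorem~\ref{T1}. Your handling of the finitely many initial terms and your explicit verification that $a_n+a_{n+1}=O(n\ln n)$ for the recurrence part are slightly more careful than the paper's one-line treatment, but the argument is the same.
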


If the degree excess sequence $k_n$  is bounded, 
then it is definitely less than $\ln n$ for sufficiently large $n$, so Corollary~\ref{C1} as well as Theorem~\ref{T1} implies Theorem~\ref{Rep}.
Also note that unlike  Theorem~\ref{Rep},  we do \emph{not} require $G$ to be \emph{of bounded degree}. 

\begin{proof}[Proof of Corollary~\ref{C1}]
Since $$\sum_{j=1}^{n} \ln j \leq n \ln n,$$ 
one can easily see that  
\[
a_n= \sum_{j=0}^{n-1} (k_j +6) \leq  (c+1) n \ln n
\]
for  sufficiently large $n$. Thus $a_n$ satisfies the assumptions in Theorem~\ref{T1} because $\sum_{n=2}^\infty 1/(n \ln n) = \infty$.
Corollary~\ref{C1} now follows from Theorem~\ref{T1}.
\end{proof}

We remark that the conditions \eqref{Reci} and \eqref{Reci2} in Theorem~\ref{T1} are sharp, which can be seen in the following theorem. 

\begin{theorem}\label{T:S}
Suppose $\{ k_n \}_{n=0}^\infty$ is a sequence of integers satisfying 
\begin{equation}\label{ICP}
  a_n = \sum_{j=0}^{n-1} (k_j +6) \geq 3
  \end{equation}
for every $n=1,2, \ldots$. If
$ \sum_{n=1}^\infty 1/a_n < \infty$
then there exists a CP hyperbolic disk triangulation graph $G$ such that 
\begin{equation}\label{DES2}
k_n = \sum_{v \in B_n} (\deg v - 6)
\end{equation}
for every $n=0,1,2, \ldots$, where $B_n$ is the combinatorial ball of radius $n$ and centered at a fixed vertex of $G$. Similarly if 
$\sum_{n=1}^\infty 1/(a_n+a_{n+1}) < \infty$
 then there exists a transient disk triangulation graph $G$ satisfying \eqref{DES2}.
 \end{theorem}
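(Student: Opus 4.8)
\emph{The construction and the bookkeeping.} The plan is to realize the prescribed sequence by an explicit concentric triangulation and then read off its type from a one-dimensional quantity. Fix a center $v_0$, let the vertices at combinatorial distance $n$ form a single cycle $C_n$, and impose $|C_n|=a_n$ (with $|C_0|=1$); the hypothesis \eqref{ICP} that $a_n\ge 3$ is exactly what is needed for each $C_n$ to be an embedded cycle. Between consecutive cycles $C_n$ and $C_{n+1}$ I would triangulate the annulus \emph{without} adding vertices, spreading the ``spokes'' joining $C_n$ to $C_{n+1}$ as evenly as possible. A triangulated annulus with no interior vertices and boundary cycles of lengths $p$ and $q$ necessarily has $p+q$ spokes and $p+q$ triangles (its Euler characteristic is $0$); feeding this into the combinatorial Gauss--Bonnet / Euler computation on the ball $B_n$ yields $\sum_{v\in B_n}(\deg v-6)=|C_{n+1}|-|C_n|-6=a_{n+1}-a_n-6=k_n$, with the center handled separately via $\deg v_0=|C_1|=a_1=k_0+6$. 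Thus $G$ is a disk triangulation graph realizing \eqref{DES2} exactly, \emph{independently} of how the spokes are distributed, and this residual freedom is what I would exploit to control the type.

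\emph{Transience when $\sum 1/(a_n+a_{n+1})<\infty$.} Here I would exhibit a finite-energy unit flow from $v_0$ to infinity. Route the current radially outward, splitting it as evenly as the spoke pattern allows so that each vertex of $C_n$ carries incoming and outgoing current $\approx 1/a_n$. The point is to choose the spokes so that the flow is \emph{locally balanced} at every vertex, which keeps the currents on the cycle edges of $C_n$ of the same order as the spoke currents rather than of order $1/a_n$. The spokes between $C_n$ and $C_{n+1}$ then carry energy $(a_n+a_{n+1})\cdot(a_n+a_{n+1})^{-2}=1/(a_n+a_{n+1})$, and the $a_n$ cycle edges of $C_n$ contribute $O\!\bigl(a_n/(a_n+a_{n+1})^2\bigr)=O\!\bigl(1/(a_n+a_{n+1})\bigr)$; summing gives total energy $\lesssim\sum_n 1/(a_n+a_{n+1})<\infty$, so $G$ is transient. (Nash--Williams applied to the spoke cut-sets gives the matching lower bound on the effective resistance, consistent with the recurrence half of Theorem~\ref{T1}, confirming sharpness.)

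\emph{CP hyperbolicity when $\sum 1/a_n<\infty$.} I would build a concrete radially symmetric circle packing with the above combinatorics, placing the $a_n$ circles of generation $n$, of common radius $r_n$, around the circle of radius $R_n$. The tangency conditions force $r_n\approx \pi R_n/a_n$ and hence $R_{n+1}\approx R_n(1+2\pi/a_n)$, so that $R_n\approx R_1\prod_{j<n}(1+2\pi/a_j)$ stays bounded precisely when $\sum 1/a_n<\infty$. In that case $r_n\to 0$ and the circles accumulate on the entire limit circle $\{|z|=R_\infty\}$, so the carrier of this locally finite packing is the round disk $\{|z|<R_\infty\}$. Since a disk triangulation graph packs either $\mathbb C$ or $\mathbb D$ and no Mobius image of a plane-filling packing has a round disk as carrier, $G$ must be CP hyperbolic. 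As $\sum 1/a_n<\infty$ also forces $\sum 1/(a_n+a_{n+1})<\infty$, this $G$ is in addition transient, in agreement with Theorem~\ref{T1}.

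\emph{The main obstacle.} The delicate work lies in the two ``competitor'' constructions rather than in the bookkeeping. For transience one must choose the spoke pattern so that the flow is genuinely locally balanced and the cycle-edge energy is dominated by $\sum 1/(a_n+a_{n+1})$ and not by $\sum 1/a_n$, which may diverge under the weaker hypothesis. For hyperbolicity the heuristic $R_{n+1}\approx R_n(1+2\pi/a_n)$ must be upgraded to rigorous two-sided bounds on the radii; because the arbitrary integers $a_n$ break exact rotational symmetry, I expect to need Ring-Lemma type control (in the spirit of Rodin--Sullivan) on the ratios of radii of adjacent circles to convert the formal product into a genuinely bounded one, or else to bypass the hand-built packing by invoking the paper's own extremal-length criterion for CP hyperbolicity and checking that the relevant extremal length reduces to $\sum 1/a_n$. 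This radius, respectively extremal-length, estimate is the crux of the argument.
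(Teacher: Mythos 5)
Your construction and bookkeeping coincide with the paper's: the graph built in Section~\ref{S:des} is exactly a concentric triangulation with $|S_n(v_0)|=a_n$ and no interior vertices between consecutive layers, and the identity $k_n=a_{n+1}-a_n-6$ is verified there via Theorem~\ref{degree6} (no cut locus), which is your Euler computation in disguise. The two criteria you invoke (a finite-energy unit flow for transience, an extremal-length computation for hyperbolicity) are also the ones the paper uses. However, both of the steps you yourself flag as ``the delicate work'' are genuine gaps as written, and the paper closes them with a single device you do not mention: it realizes the $n$-th layer as a \emph{ring of $a_n$ squares of side length $1/a_n$} and takes $G$ to be the contact graph of the resulting square tiling. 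This does two things at once. First, Schramm's extremal metric for square tilings (Lemma~\ref{lemma1}) gives $\mathrm{VEL}(S_1,S_n)=\sum_{k=1}^{n}1/a_k$ exactly, so $\sum 1/a_n<\infty$ yields VEL hyperbolicity and hence CP hyperbolicity by Theorem~\ref{list}(d) --- this is your ``fallback'' route, and it is the only one that works here: your primary route via a hand-built packing with radii $r_n\approx \pi R_n/a_n$ needs Ring-Lemma control, which is unavailable because the degrees are unbounded whenever $a_{n+1}/a_n$ is unbounded. Second, the ``water flow'' $\theta(v,w)=\mathrm{length}(Q_v\cap Q_w)$ is \emph{exactly} conserved at every vertex (the inflow and outflow at $v$ both equal the side length $1/a_n$ of $Q_v$), so the current on cycle edges is identically zero rather than merely ``of the same order''; combined with $|E_n|=a_n+a_{n+1}$ and $|\theta(v,w)|\le\min\{1/a_n,1/a_{n+1}\}\le 2/(a_n+a_{n+1})$ this gives energy at most $1/a_1+\sum_n 4/(a_n+a_{n+1})<\infty$. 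Without such a mechanism, your claim that the spokes can be chosen so that the flow is ``genuinely locally balanced'' is precisely the unproved step, since with equal spoke currents a vertex of $C_n$ generally does not receive from below what it must send above.

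In short: same skeleton as the paper, but the two decisive estimates are deferred rather than proved, and the square-tiling realization (which simultaneously supplies the extremal metric and the conserved flow) is the missing ingredient that makes both rigorous.
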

 
In Theorem~\ref{T:S} one cannot omit the condition \eqref{ICP} because it is a basic  property of disk triangulation graphs as mentioned before(cf.\ Remark~\ref{rm1}). 
Also note that if $k_n$ is the sequence such that $k_0=-3$, $k_{2n-1} =-3^n-3$  and $k_{2n} = 3^{n+1}-9$ for $n =1,2,3, \ldots$,  
we have $a_{2n-1} = \sum_{j=0}^{2n-2} (k_j +6) =3^n$ and $a_{2n} =3$ for $n =1,2, \ldots$. Thus in this case the example to be constructed in Theorem~\ref{T:S}
is transient since $ \sum 1/(a_n+a_{n+1}) < \infty$, while Theorem~\ref{T1} implies that it is CP parabolic because $\sum 1/a_n = \infty$.  

Let us briefly sketch our proof for Theorem~\ref{T1}. For a combinatorial ball $B_n$ we study the total combinatorial curvature $\kappa (B_n)$ of $B_n$ 
(see Sections~\ref{prelim} and \ref{S:CGB} for notation and terminology),
and  will check that it is a constant multiple of $k_n$. On the other hand,  by interpreting  combinatorially the total left turns(geodesic curvature)  made on the boundary of $B_n$,
we associate left turns with a formula involving the number of edges connecting $B_n$ to the rest of the graph. Then since the combinatorial Gauss-Bonnet theorem enables us to
convert information about  $k_n$ to information about the number of boundary edges, it will be possible to show 
 \[
 |S_n| \leq  \sum_{j=0}^{n-1} (k_j +6) = a_n \quad \mbox{and} \quad |E_n|\leq a_n + a_{n+1},
 \]
 where $S_n$ is the combinatorial sphere of radius $n$ and centered at a fixed vertex (i.e., the set of vertices with distance $n$ from a fixed vertex), 
 and $E_n$ is the set of edges connecting $B_n$ to the rest of the graph. Hence we can prove Theorem~\ref{T1} using the following versions of  Rodin and Sullivan's theorem 
\cite{RS87} (cf.\ \cite[p.\ 226]{Ste05}) and Nash-Williams' theorem \cite{NW59} (cf.\ \cite[p.\ 37]{LP16} or \cite[p.\ 55]{Soa}).
 
 \begin{theorem}[Rodin and Sullivan]\label{T:RoSull}
 A disk triangulation graph $G$ is CP parabolic if 
 $\sum_{n=1}^\infty 1/|S_n| = \infty$,
 where $S_n$ is the combinatorial sphere of radius $n$ and centered at a fixed vertex.
 \end{theorem}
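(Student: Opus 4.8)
The plan is to prove the contrapositive via the classical \emph{length--area method}. By the uniqueness dichotomy recalled in the introduction, a disk triangulation graph is either CP parabolic or CP hyperbolic but not both, so it suffices to show that if $\sum_{n} 1/|S_n| = \infty$ then $G$ cannot be CP hyperbolic. Accordingly, I would suppose for contradiction that $G$ is CP hyperbolic and fix a circle packing $P = \{P_v\}_{v \in V}$ filling $\mathbb{D}$ whose contact graph is $G$. Writing $r_v$ for the radius of $P_v$ and letting $r_o > 0$ denote the radius of the disk $P_o$ attached to the center vertex $o$, the disjointness of the disks together with their containment in $\mathbb{D}$ gives, upon summing areas, the bound $\sum_{v} r_v^2 \le 1$.

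The heart of the argument is a uniform lower bound on $L_n := \sum_{v \in S_n} r_v$. For each $n \ge 1$ the combinatorial sphere $S_n$ is a simple cycle in $G$ (a structural fact for disk triangulation graphs that I would record in the preliminaries), so the circles $\{P_v : v \in S_n\}$ form a closed ``necklace'' of consecutively tangent disks, and the polygon $\gamma_n$ joining their centers in cyclic order is a closed curve of length $\sum_{[v,w]} (r_v + r_w) = 2 L_n$, each vertex of the cycle contributing its radius exactly twice. Because $S_n$ separates $o$ from the end of $G$, the Jordan curve theorem applied to the planar realization shows that $\gamma_n$ bounds a region containing the entire disk $P_o$; hence the enclosed area is at least $\pi r_o^2$, and the isoperimetric inequality yields $\mathrm{length}(\gamma_n) \ge 2\pi r_o$, i.e.\ $L_n \ge \pi r_o =: c > 0$.

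With this in hand the conclusion is a two-line computation. By Cauchy--Schwarz, $c^2 \le L_n^2 \le |S_n| \sum_{v \in S_n} r_v^2$, so $1/|S_n| \le c^{-2} \sum_{v \in S_n} r_v^2$; summing over $n$ and using the area bound gives $\sum_{n} 1/|S_n| \le c^{-2} \sum_v r_v^2 \le c^{-2} < \infty$, contradicting the hypothesis. Therefore $G$ is not CP hyperbolic, and the dichotomy forces $G$ to be CP parabolic.

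I expect the main obstacle to be the lower bound $L_n \ge c$, since everything else is formal. Two points need care: first, confirming that each sphere $S_n$ is genuinely a simple cycle whose circles form an embedded necklace separating $P_o$ from $\partial \mathbb{D}$ (this is exactly where the one-ended disk-triangulation structure and planarity enter), and second, verifying that the center-polygon $\gamma_n$ lies inside the union of the necklace disks, so that its enclosed region really contains all of $P_o$ rather than merely a single point of it. If $S_n$ failed to be a simple cycle one would instead bound the length of the inner boundary of the necklace and absorb a bounded combinatorial factor, but with the cycle structure the constant is clean. It is worth emphasizing that the CP hyperbolic (bounded-domain) hypothesis enters only through the finiteness of $\sum_v r_v^2$, whereas the fixed positive radius $r_o$ --- available because the packing is non-degenerate --- is precisely what makes $L_n$ bounded below uniformly in $n$.
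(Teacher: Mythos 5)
The paper does not prove this statement: Theorem~\ref{T:RoSull} is quoted as Rodin and Sullivan's theorem, with the proof deferred to the cited literature, so there is no internal argument to compare against. Your length--area proof is essentially the classical one, and its skeleton is sound: the area bound $\sum_v r_v^2\le 1$, the Cauchy--Schwarz step, the uniform lower bound $L_n\ge \pi r_o$ as the crux, and the appeal to the He--Schramm uniqueness dichotomy are all the right ingredients.

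The one genuine flaw is the ``structural fact'' you propose to record in the preliminaries: for a general disk triangulation graph the combinatorial sphere $S_n$ is \emph{not} a simple cycle. A vertex $v\in S_n$ may have all of its neighbors in $B_n$ (i.e., $v$ lies in the cut locus $C(v_0)$ of Section~\ref{S:des}), in which case $v$ sits in the topological interior of $D(B_n)$ and lies on no boundary cycle; moreover $\mathbb{C}\setminus D(B_n)$ can have several components, so the boundary walk $bB_n$ is in general a union of $m\ge 1$ cycles which themselves need not be simple. This is precisely the difficulty that Observations~\ref{O1}--\ref{O6}, the passage from $B_n$ to the main body $A_n$, and Lemma~\ref{L:S_n} are built to handle, so the claim cannot simply be ``recorded.'' Your fallback (``absorb a bounded combinatorial factor'') is also not the right repair, and no such factor is needed: take $\Gamma$ to be the cycle of $bA_n$ bounding the unbounded component of $\mathbb{C}\setminus D(A_n)$. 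By Observation~\ref{O3} it is a simple cycle, by Observation~\ref{O2} its vertices lie in $S_n$, and it separates $v_0$ from infinity; running your necklace argument on $\Gamma$ alone gives $\pi r_o\le \sum_{v\in V(\Gamma)} r_v\le L_n$, after which the rest of your proof goes through verbatim. (Alternatively, a route consistent with the paper's own toolkit avoids the boundary geometry entirely: the truncated metric $\mu_N(v)=1/|S_n|$ for $v\in S_n$ with $n\le N$ and $\mu_N=0$ elsewhere satisfies $L_{\mu_N}(\gamma)\ge\sum_{n=1}^{N}1/|S_n|=\mathrm{area}_{\mu_N}(V)$ for every $\gamma\in\Gamma(v_0,\infty)$, whence $\mathrm{VEL}(v_0,\infty)\ge\sum_{n=1}^{N}1/|S_n|\to\infty$, and Theorem~\ref{list}(d) converts VEL parabolicity into CP parabolicity.)
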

 
 \begin{theorem}[Nash-Williams]\label{T:Nash}
 An infinite graph $G$ is recurrent if  
 $\sum_{n=1}^\infty 1/|E_n| = \infty$,
 where $E_n$ is the set of edges connecting $S_n$ to $S_{n+1}$. (Here $S_k$'s are combinatorial spheres as in Theorem~\ref{T:RoSull}.)
 \end{theorem}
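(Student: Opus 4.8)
The plan is to recast recurrence in the language of electrical networks and then apply the classical cutset estimate. Assign unit conductance (equivalently, unit resistance) to every edge of $G$, fix the basepoint $o$ at which the combinatorial spheres $S_n$ are centred, and recall the standard dichotomy from the theory of reversible Markov chains: the simple random walk on $G$ is recurrent if and only if the effective resistance $R_{\mathrm{eff}}(o \leftrightarrow \infty)$ from $o$ to infinity is infinite. It therefore suffices to show that the hypothesis $\sum_{n=1}^\infty 1/|E_n| = \infty$ forces $R_{\mathrm{eff}}(o \leftrightarrow \infty) = \infty$.

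The main tool is Thomson's principle, which represents effective resistance as a minimum of energy over unit flows:
\[
R_{\mathrm{eff}}(o \leftrightarrow \infty) = \inf_{\theta} \mathcal{E}(\theta), \qquad \mathcal{E}(\theta) = \sum_{e} \theta(e)^2,
\]
the infimum running over all unit flows $\theta$ from $o$ to infinity, i.e.\ antisymmetric edge functions that are conserved at every vertex except $o$, where the net outflow is $1$. The key combinatorial observation is that the edge sets $E_1, E_2, \ldots$ are pairwise disjoint cutsets: since the two endpoints of any edge of $G$ have distances from $o$ that differ by at most one, an edge lying between $S_n$ and $S_{n+1}$ belongs to $E_n$ and to no other $E_m$; moreover deleting $E_n$ disconnects the finite ball $B_n \ni o$ from the rest of $G$.

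Given any unit flow $\theta$, conservation of flow throughout $B_n$ (in which $o$ is the only source) forces the net flow across $E_n$ to equal $1$, so in particular $\sum_{e \in E_n} |\theta(e)| \geq 1$. By Cauchy--Schwarz,
\[
1 \leq \Big( \sum_{e \in E_n} |\theta(e)| \Big)^2 \leq |E_n| \sum_{e \in E_n} \theta(e)^2,
\]
whence $\sum_{e \in E_n} \theta(e)^2 \geq 1/|E_n|$. Because the cutsets are disjoint, summing these estimates gives $\mathcal{E}(\theta) \geq \sum_{n=1}^\infty 1/|E_n| = \infty$ for \emph{every} unit flow $\theta$; taking the infimum yields $R_{\mathrm{eff}}(o \leftrightarrow \infty) = \infty$, and recurrence follows.

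The step I expect to demand the most care is the infinite-network bookkeeping rather than any single inequality. One must justify Thomson's principle and the recurrence/effective-resistance dichotomy on an infinite graph, typically by exhausting $G$ with finite subnetworks (wiring the complement of $B_N$ to a single point) and passing to the limit, and one must verify rigorously that a unit flow to infinity crosses each cutset $E_n$ with net flow exactly $1$. Once these foundational facts are secured, the disjointness of the $E_n$ together with the Cauchy--Schwarz bound make the conclusion immediate.
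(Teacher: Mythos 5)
The paper does not prove this statement: it is quoted as a classical theorem of Nash--Williams and referred to the cited sources (\cite{NW59}, \cite[p.~37]{LP16}, \cite[p.~55]{Soa}) for a proof. Your argument is precisely the standard cutset/energy proof given in those references --- the sets $E_n = \partial B_n$ are pairwise disjoint cutsets, every unit flow has net flux $1$ through each, and Cauchy--Schwarz plus Thomson's principle give $R_{\mathrm{eff}}(o \leftrightarrow \infty) \geq \sum_n 1/|E_n| = \infty$ --- and it is correct as written.
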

 
We next discuss a criterion for CP hyperbolicity. Note that Theorem~\ref{T1} narrows the `gap'  from the parabolic cases,
so we need a theorem which fills the `gap' from the hyperbolic cases.

\begin{theorem}\label{T:Hyp}
 Suppose $G$ is a disk triangulation graph. If there exists a nondecreasing function  $g: \mathbb{N} \to (0, \infty)$  such that $\sum_{n=1}^\infty 1/g(n)^2 < \infty$ and 
\[
\sup_{S_0} \inf_{S \supset S_0} \frac{1}{g(|S|)} \sum_{v \in S} (\deg v - 6) >0,
\]
 where $S$ and $S_0$ are the same as in Theorem~\ref{T:HSit}, then $G$ is CP hyperbolic and transient.
\end{theorem}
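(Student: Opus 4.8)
The plan is to deduce the statement from the contrapositive of He and Schramm's Theorem~\ref{HS2}(a), by converting the combinatorial curvature hypothesis into a genuine perimetric inequality. First I would unwind the supremum--infimum: the hypothesis furnishes a finite connected subgraph $S_0$ and a constant $\epsilon>0$ such that
\[
\sum_{v\in S}(\deg v - 6) \ge \epsilon\, g(|S|)
\]
for every connected finite subgraph $S \supset S_0$. Since the left-hand side is a constant multiple of the negated total combinatorial curvature $\kappa(S)$ of Section~\ref{S:CGB}, this says precisely that every large region carries a definite amount of negative curvature, measured against the gauge $g$.

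Second, I would run the Gauss--Bonnet/planarity bookkeeping. For connected finite $S$ write $E_S$ for the set of edges joining $S$ to its complement and $V_b$ for the number of boundary vertices of $S$. Because the induced graph on $S$ is simple and planar, $E(S)\le 3|S|-6$, which upon counting $\sum_{v\in S}\deg v = 2E(S)+|E_S|$ gives the robust inequality
\[
\sum_{v\in S}(\deg v - 6) \le |E_S| - 12,
\]
and, for the simply connected filled region spanned by $S$, the sharper combinatorial Gauss--Bonnet identity $\sum_{v\in S}(\deg v-6)=|E_S|-2V_b-6$. Either way the curvature hypothesis yields the edge-perimetric inequality $|E_S|\ge \epsilon\, g(|S|)$ for all $S\supset S_0$. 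This already settles transience: feeding $|E_S|\ge \epsilon g(|S|)$ together with $\sum_n 1/g(n)^2<\infty$ into the edge-isoperimetric (Nash--Williams) criterion in the spirit of Theorem~\ref{T:Nash} makes the simple random walk transient. (Alternatively transience is automatic once CP hyperbolicity is in hand, being the contrapositive of the first assertion of Theorem~\ref{HS1}.)

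Third comes the crux, which is upgrading the edge bound to the \emph{vertex} bound that Theorem~\ref{HS2}(a) demands. Traversing the boundary cycle of the filled $S$ and recording the combinatorial left turns, the outside neighbours of consecutive boundary vertices form fans whose shared corners glue into a closed boundary walk of length $|E_S|-V_b\ge \sum_{v\in S}(\deg v-6)$. When this walk is embedded, so that no outside vertex is visited twice, one reads off $|dS|\ge \sum_{v\in S}(\deg v-6)\ge \epsilon\, g(|S|)$ at once. In general, however, the walk may revisit a ``pinch'' vertex, and the hard part will be to bound this over-sharing: I must show that a single outside vertex cannot be adjacent to an arbitrarily large share of the boundary of $S$ without itself absorbing a comparable amount of curvature. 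I expect to control this by a second application of the combinatorial Gauss--Bonnet theorem to the one-vertex neighbourhood $S\cup dS$ combined with the planarity of each fan, thereby producing an absolute constant $c>0$ with $|dS|\ge c\sum_{v\in S}(\deg v-6)$, and hence $|dS|\ge c\,\epsilon\, g(|S|)$ for every connected finite $S\supset S_0$.

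Finally I would conclude. Set $\gamma=c\epsilon g$; then $\gamma$ is nondecreasing, $\sum_n 1/\gamma(n)^2=(c\epsilon)^{-2}\sum_n 1/g(n)^2<\infty$, and the perimetric inequality $|dS|\ge \gamma(|S|)$ holds for all connected finite $S\supset S_0$. If $G$ were CP parabolic, Theorem~\ref{HS2}(a) would force $\sum_n 1/\gamma(n)^2=\infty$, a contradiction; hence $G$ is CP hyperbolic, and transience follows as noted above. The only genuine obstacle in this scheme is the over-sharing estimate of the previous paragraph; the remaining steps are routine manipulation of the Gauss--Bonnet identity and an appeal to Theorem~\ref{HS2}.
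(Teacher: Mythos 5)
Your overall strategy is the paper's: extract a constant $c>0$ and a base graph $S_0$ with $\sum_{v\in S}(\deg v-6)\ge c\,g(|S|)$ for all connected finite $S\supset S_0$, convert this via the combinatorial Gauss--Bonnet theorem into the vertex-perimetric inequality $|dS|\ge c\,g(|S|)$, and conclude by the contrapositive of Theorem~\ref{HS2}(a) (with transience then following from Theorem~\ref{HS1}, as your parenthetical correctly notes --- your primary transience argument, feeding an edge-perimetric \emph{lower} bound into a criterion ``in the spirit of'' Theorem~\ref{T:Nash}, is not a valid use of Nash--Williams as stated). However, the step you yourself flag as ``the only genuine obstacle'' is a genuine gap, and moreover the statement you propose to prove there is false. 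You aim for an absolute combinatorial inequality $|dS|\ge c'\sum_{v\in S}(\deg v-6)$ valid for every connected finite $S$ in every disk triangulation graph. Without a degree bound this fails: one can arrange bounded complementary components (``holes'') of $S\cup dS$ whose boundaries consist of a few very high-degree vertices of $dS$ and whose interiors are packed with low-degree vertices not adjacent to $S$. Each such hole absorbs a large amount of positive curvature while contributing only $O(1)$ vertices to $dS$, so $|dS|$ can be of order $\sqrt{N}$ while $\sum_{v\in S}(\deg v-6)=N$. No amount of ``bounding the over-sharing'' at pinch vertices can recover the inequality, because the obstruction lives inside the holes, not at the pinches.

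The paper's resolution is different and is the idea your sketch is missing: do not try to relate $|dS|$ to the curvature of $S$ itself. Instead, form $T=S\cup dS$ (induced), fill the bounded components of $\mathbb{C}\setminus D(T)$ to get a simply connected $W$, and set $Z=W\setminus V(bW)$, so that $S\subset Z$ and $dZ=V(bW)\subset dS$. Since every vertex of $bW$ has an edge into $Z$, each inner left turn is at most $1/6$, and GBF-2 applied to $W$ gives $-6\kappa(Z)=6\tau_i(b_iW)-6\le|dZ|-6\le|dS|$. The crucial move is then to apply the curvature hypothesis \emph{to the enlarged set $Z$} (legitimate because the hypothesis quantifies over all connected finite supersets of $S_0$, and $Z$ is shown to be connected) together with the monotonicity of $g$: $c\,g(|S|)\le c\,g(|Z|)\le\sum_{v\in Z}(\deg v-6)=-6\kappa(Z)\le|dS|$. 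This sidesteps the over-sharing problem entirely --- the hole vertices and pinch vertices are absorbed into $Z$, where the hypothesis controls their net curvature --- and it is why the theorem is stated with an infimum over all $S\supset S_0$ rather than for a single exhaustion. Your proposal needs this enlargement-and-reapplication step; as written, the crux is both unproven and aimed at a false intermediate claim.
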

 
By taking $g(x) = x^\alpha$ for $\alpha > 1/2$, it follows from Theorem~\ref{T:Hyp} that
 
 \begin{corollary}\label{C:useless}
 Let $G, S$, and $S_0$ be as in the previous theorem. If  there exists $\alpha > 1/2$ such that 
 \[
\sup_{S_0} \inf_{S \supset S_0} \frac{1}{|S|^\alpha} \sum_{v \in S} (\deg v - 6) >0,
\]
then $G$ is CP hyperbolic and transient. 
 \end{corollary}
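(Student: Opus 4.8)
The plan is to deduce Corollary~\ref{C:useless} directly from Theorem~\ref{T:Hyp} by exhibiting an explicit admissible comparison function, namely the power function $g(x) = x^\alpha$ with the given exponent $\alpha > 1/2$. I would verify that this $g$ satisfies both hypotheses of Theorem~\ref{T:Hyp}, and that the curvature condition appearing there then reduces verbatim to the assumption of the corollary, at which point the conclusion is immediate.

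First I would confirm that $g(x) = x^\alpha$ is a legitimate choice, i.e.\ a nondecreasing function from $\mathbb{N}$ to $(0,\infty)$. Since $\alpha > 1/2 > 0$, the map $x \mapsto x^\alpha$ is positive and increasing, so it is in particular nondecreasing on $\mathbb{N}$ and takes values in $(0,\infty)$. Next I would check the summability hypothesis $\sum_{n=1}^\infty 1/g(n)^2 < \infty$. With $g(n) = n^\alpha$ we have $1/g(n)^2 = 1/n^{2\alpha}$, which is a $p$-series with $p = 2\alpha$; because $\alpha > 1/2$ forces $2\alpha > 1$, the series converges. This is precisely where the threshold $\alpha > 1/2$ enters, and it is no coincidence that it matches the exponent in the statement.

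Finally, substituting $g(|S|) = |S|^\alpha$ into the curvature hypothesis of Theorem~\ref{T:Hyp} turns it into
\[
\sup_{S_0} \inf_{S \supset S_0} \frac{1}{|S|^\alpha} \sum_{v \in S} (\deg v - 6) > 0,
\]
which is exactly the assumption of Corollary~\ref{C:useless}. Hence all hypotheses of Theorem~\ref{T:Hyp} are met, and we conclude that $G$ is CP hyperbolic and transient. I do not expect any genuine obstacle here: the only step with any content is the convergence of $\sum 1/n^{2\alpha}$, and even that is the routine $p$-series test. The corollary is nothing more than the specialization of Theorem~\ref{T:Hyp} to power functions, so the entire difficulty of the result resides in Theorem~\ref{T:Hyp} itself, which this plan takes as already established.
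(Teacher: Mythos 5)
Your proposal is correct and is exactly the paper's argument: the paper derives Corollary~\ref{C:useless} by the single line ``taking $g(x) = x^\alpha$ for $\alpha > 1/2$'' in Theorem~\ref{T:Hyp}, with the convergence of $\sum 1/n^{2\alpha}$ being the only point to check. Nothing further is needed.
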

Corollary~\ref{C:useless} can be considered a generalization of Theorem~\ref{T:HSit}(b),  because it is nothing but the case $\alpha=1$ in Corollary~\ref{C:useless}. 
See Section~\ref{S:doubt} for more explanation about this corollary, and some other hyperbolic criteria derived from previously known results.

The strategy for the proof of Theorem~\ref{T:Hyp} is similar to that of Theorem~\ref{T1}; i.e., we will check that the combinatorial Gauss-Bonnet theorem translates the information
about total curvature $\kappa(S)$ of an appropriate subgraph $S\subset G$ into information about the number of boundary edges of $S$, 
and that $\kappa(S)$ is a constant multiple of $\sum_{v \in S} (\deg v - 6)$. Thus in this procedure one can prove certain statements by transferring  conditions about
the quantity $\sum_{v \in S} (\deg v - 6)$ to already known perimetric conditions, 
which would be Rodin-Sullivan's and Nash-Williams' theorems for CP parabolicity and recurrence, respectively, in Theorem~\ref{T1}, 
and  we would use Theorem~\ref{HS2}(a) for the proof of Theorem~\ref{T:Hyp}.

\subsection{Layered circle packings and Siders' criterion}
A \emph{layered circle packing} is a circle packing such that all the circles in each layer have the same contact degrees; i.e., if $G$ is the contact graph of a circle packing, then
it is a layered circle packing if, for each  $n=0, 1,2, \ldots$, we have $\deg v = \deg w$ for $v, w \in S_n$. Here $S_n$ is the combinatorial sphere of radius $n$ and centered at 
a fixed vertex of $G$. See Figure~\ref{F:layered}. In particular, we are interested in the case that $\deg v \geq 6$ for every vertex $v \in V(G)$.
\begin{figure}[t]
\begin{center}
\includegraphics[width=90mm, height=90mm]{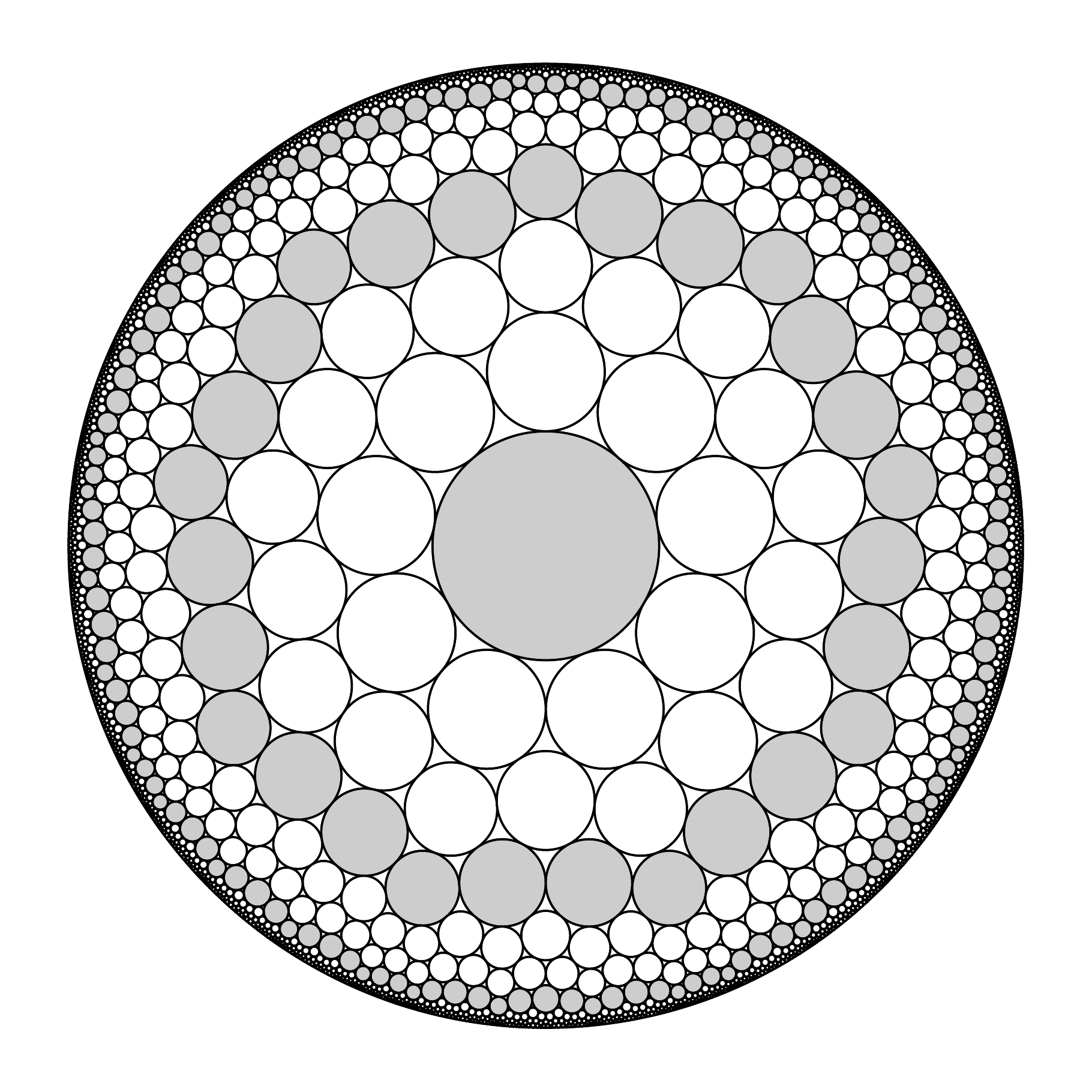}
\caption{A finite portion of a layered circle packing with $\deg v_0 =9$, $h_1 = h_2 =3$, $d_1 =1$ and $d_2 =2$. Circles of contact degrees greater than $6$ are shaded.}\label{F:layered}
\end{center}
\end{figure}

In order to describe the problem precisely, suppose $v_0$ is a vertex of $G$, $S_n$ the combinatorial sphere of radius $n$ and centered at $v_0$, and $(h_k, d_k)$
a sequence of ordered pairs in $\mathbb{N} \times \mathbb{N}$. Then our  assumption is the following:
$\deg v_0  \geq 6$, $\deg v = 6$ if $v \in S_1 \cup S_2 \cup \cdots \cup S_{h_1 -1}$, and $\deg v = d_1+6 \geq 7$ if $v \in S_{h_1}$. 
Similarly, for  $k \geq 2$ we assume that $\deg v = 6$ if $v \in S_m$ for $h_1 + h_2 + \cdots + h_{k-1} +1 \leq m \leq h_1 + h_2 + \cdots +h_{k-1} +h_{k} -1$, 
and $\deg v = d_k +6 \geq 7$ if $v \in S_{h_1 + h_2+ \cdots + h_k}$. In other words, we consider the circle packing such that for every $k\in \mathbb{N}$,
   $h_{k}-1$ layers consisting of degree $6$ vertices are followed by the $(h_1+h_2 + \cdots + h_{k})$-th layer consisting of degree $d_k +6 \geq 7$ vertices. 
Definitely  a layered circle packing is  determined by the sequence  of the pairs $(h_k, d_k)$, $k \geq 0$  (with $h_0=0$ and $d_0 = \deg v_0 -6$). 
  
 In \cite{Si98} Siders studied layered circle packings such that $d_k = 1$ for every $k=1,2, \ldots$, and proved the following result. 
 Note that in this case one only needs to consider the sequence $h_k$ instead of the sequence of pairs $(h_k, d_k)$. 
 
\begin{theorem}[Siders]\label{T:siders}
Suppose $G$ is a disk triangulation graph associated with a layered circle packing determined by the sequence $h_k$ of natural numbers. Then $G$ is CP hyperbolic if
\begin{equation}\label{E:LCP-P}
 \sum_{n=2}^\infty \frac{\ln h_n}{\prod_{k=1}^{n-1} h_k } < \infty,
 \end{equation}
 and CP parabolic if
 \[ 
 \sum_{n=2}^\infty \frac{\ln h_n }{2^n \prod_{k=1}^{n-1} h_k} = \infty.
 \]
\end{theorem}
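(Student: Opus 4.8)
The plan is to reduce both of Siders' conditions to the single quantity $\sum_n 1/|S_n|$ (equivalently $\sum_n 1/a_n$) and then feed this into the parabolic and hyperbolic criteria already at hand. Write $H_j = h_1 + \cdots + h_j$ for the radii of the ``special'' spheres and $s_n = |S_n|$. First I would record the combinatorial skeleton of a layered packing: each $S_n$ is a cycle, a short count of the triangulated annulus between $S_n$ and $S_{n+1}$ gives $|E_n| = s_n + s_{n+1}$, and summing degrees over $S_n$ yields the discrete second difference identity $\sum_{v \in S_n}(\deg v - 6) = s_{n+1} - 2 s_n + s_{n-1}$. Since $\deg v = 6$ off the special spheres and $\deg v = 7$ on them (as $d_k = 1$), this says $s_{n+1} - 2s_n + s_{n-1} = 0$ unless $n = H_j$, in which case it equals $s_{H_j}$. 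Telescoping the same identity also gives the clean closed form $a_n = \sum_{j=0}^{n-1}(k_j+6) = s_n + n - 1$, so that $a_n$ and $s_n$ are comparable and $\sum 1/a_n$, $\sum 1/s_n$ converge or diverge together.

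Next I would solve this recursion blockwise. On each interval $H_{j-1} \le n \le H_j$ the sequence $s_n$ is affine with some slope $\mu_j$, and the jump at a special sphere gives the update $\mu_{j+1} = \mu_j + s_{H_j}$ with $s_{H_j} = s_{H_{j-1}} + \mu_j h_j$. Iterating traps the slope between two products, $\prod_{k<j} h_k \lesssim \mu_j \lesssim \prod_{k<j}(h_k + 2) \le 2^{\,j}\prod_{k<j} h_k$, the upper estimate being the origin of the factor $2^n$ in the parabolic hypothesis. I would then evaluate the radial sum block by block: since $s_n$ is affine with slope $\mu_j$ across block $j$, one has $\sum_{n \in \text{block } j} 1/s_n \asymp \mu_j^{-1}\ln(s_{H_j}/s_{H_{j-1}}) \asymp (\ln h_j)/\mu_j$, and combining with the two-sided bound on $\mu_j$ sandwiches
\[
\sum_{n} \frac{\ln h_n}{2^{n}\prod_{k<n} h_k} \ \lesssim\ \sum_n \frac{1}{s_n} \ \lesssim\ \sum_n \frac{\ln h_n}{\prod_{k<n} h_k}.
\]

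With the sandwich in hand, both halves follow from known results. If $\sum \ln h_n/\prod_{k<n}h_k < \infty$, then $\sum 1/s_n < \infty$, hence $\sum 1/|E_n| < \infty$ because $|E_n| = s_n + s_{n+1}$ is comparable to $s_n$; a radial unit flow supported on the edge-sets $E_n$ then has energy comparable to $\sum 1/|E_n|$, so $G$ is transient, and as $G$ has bounded degree ($\le \max(7, \deg v_0)$) Theorem~\ref{HS1} gives that $G$ is CP hyperbolic. If instead $\sum \ln h_n/(2^n \prod_{k<n} h_k) = \infty$, then $\sum 1/s_n = \infty$, i.e.\ $\sum 1/a_n = \infty$, and Theorem~\ref{T1} (or directly Rodin--Sullivan, Theorem~\ref{T:RoSull}) yields that $G$ is CP parabolic.

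The main obstacle is the growth analysis and the series comparison: the affine slope $\mu_j$ is only governed by a second-order recursion carrying the correction term $s_{H_{j-1}}$, and blocks with small $h_j$ are delicate because the logarithm $\ln h_j$ degenerates (vanishing when $h_j = 1$) while such blocks still contribute to $\sum 1/s_n$ through their initial term $1/s_{H_{j-1}}$. Pinning down the two-sided bound on $\mu_j$ carefully enough to reproduce exactly the discrepancy between $\prod_{k<n} h_k$ and $2^n \prod_{k<n} h_k$ --- which is precisely the gap between Siders' hyperbolic and parabolic thresholds --- is the crux; once that comparison is established, the reductions to Theorems~\ref{HS1}, \ref{T1}, and \ref{T:RoSull} are routine.
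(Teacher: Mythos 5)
Your overall strategy --- turning the layered structure into a second-order recurrence for $|S_n|$, solving it blockwise, and comparing $\sum_n 1/|S_n|$ with Siders' two series before invoking Rodin--Sullivan on one side and a transience argument on the other --- is exactly the route this paper takes: your pair $(\mu_j, s_{H_j})$ is, up to the normalization $6+k_0$, the pair $(\delta_{j-1}, c_j)$ of Section~\ref{S:LCP}, and your blockwise evaluation is the computation behind Theorem~\ref{T:LCP}. Three smaller points first. By Theorem~\ref{degree6} one has $a_n=s_n$ exactly, not $a_n=s_n+n-1$. The block sum is $\asymp \ln(h_j+1)/\mu_j$, not $\ln h_j/\mu_j$; the paper disposes of the discrepancy using $\sum 1/c_n<\infty$, which holds because $c_n\ge 2c_{n-1}$, and for the same reason the literal upper half of your sandwich, $\sum 1/s_n\lesssim\sum\ln h_n/\prod_{k<n}h_k$, fails when many $h_k=1$ (the right side can vanish while the left is positive); the hyperbolic half is cleanest via Corollary~\ref{C:LCP}, i.e.\ $c_{n-1}\ge\prod_{k<n}h_k$. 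Finally, ``a radial unit flow supported on $E_n$ with energy $\asymp\sum 1/|E_n|$'' requires an actual conservative construction (the paper builds one from a square tiling in the proof of Theorem~\ref{T:S}); in the $d_k\equiv1$ case the degree is bounded, so Theorem~\ref{HS1} then applies as you say.

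The genuine gap is the upper bound on the slope. From $\mu_{j+1}=\mu_j+s_{H_j}$ and $s_{H_j}=s_{H_{j-1}}+\mu_jh_j$ one gets $\mu_{j+1}=(h_j+2)\mu_j-\mu_{j-1}$, so $\mu_{j+1}\le\mu_1\prod_{k\le j}(h_k+2)$ is fine; but $h_k+2\le 2h_k$ requires $h_k\ge2$, and when $h_k=1$ the recursion is $\mu_{j+1}=3\mu_j-\mu_{j-1}$, whose growth rate per step is $(3+\sqrt5)/2\approx2.618>2$. Consequently $\mu_j/\bigl(2^j\prod_{k<j}h_k\bigr)$ can tend to infinity (long runs of $h_k=1$ punctuated by rare large values), and your claimed lower half of the sandwich, $\sum\ln h_n/\bigl(2^n\prod_{k<n}h_k\bigr)\lesssim\sum 1/s_n$, is false in general; divergence of the $2^n$-series then does not force divergence of $\sum 1/|S_n|$. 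This is precisely the crux you flag but do not resolve, and the inequality you propose to resolve it with is the one that breaks. The paper in fact never proves Theorem~\ref{T:siders}: it records in Section~1.2 that Siders implicitly assumes the $h_k$ large --- under $h_k\ge2$ your bound, and hence your whole argument, does close --- and instead proves the sharper Theorem~\ref{T:LCP}, in which the product bounds are replaced by the exact quantity $d_{n-1}c_{n-1}$, the correct threshold for arbitrary $h_k$.
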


There is in fact no explicit statement in \cite{Si98}, where the criteria above were formulated in the course of  proof/explanation. Also remark that the formulae in \cite{Si98} were
written using the number of degree $6$ layers (i.e., $h_k -1$ in our notation) instead of $h_k$. Because $h_k$ was, however, implicitly assumed very large
in \cite{Si98}, we believe that the statements written in  Theorem~\ref{T:siders} are not different from what Siders formulated in \cite{Si98}.
It was also asserted in the abstract of \cite{Si98} that a layered circle packing is parabolic or hyperbolic according as the sum $\sum \ln h_k /c_{k-1}$  diverges or converges,
respectively, where $c_k$ is the number of vertices in $S_{h_1 + \cdots + h_k}$. Unfortunately, however,
we could not find  in \cite{Si98} any rigorous proof for this assertion, at least for the hyperbolic case. In this context we provide the next theorem for supplements of Siders' result.

\begin{theorem}\label{T:LCP}
 Suppose $G$ is a disk triangulation graph associated with a layered circle packing determined by the sequence $(h_k, d_k) \in \mathbb{N} \times \mathbb{N}$.
 Then $G$ is CP parabolic if and only if
 \begin{equation}\label{E:sumeq}
 \sum_{n=2}^\infty \frac{\ln h_n}{d_{n-1} c_{n-1}} = \infty,
  \end{equation}
 where $c_n$ is a sequence of integers defined by
 \begin{equation}\label{E:LCP}
 \begin{cases} 
 c_1 = h_1,  \\
 c_n = c_{n-1} + h_n ( 1+d_1 c_1 + d_2 c_2 + \cdots + d_{n-1} c_{n-1}) \   \mbox{ for } n \geq 2.
 \end{cases}
 \end{equation}
 \end{theorem}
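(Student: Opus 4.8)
The plan is to reduce the CP–type question for a layered triangulation to the growth rate of the spheres $s_n := |S_n|$, and then to match $\sum_n 1/s_n$ against the series \eqref{E:sumeq}. First I would extract a recurrence for $s_n$ from the triangulation structure, exactly in the spirit of the combinatorial Gauss--Bonnet bookkeeping used earlier in the paper. Since for a disk triangulation graph each combinatorial sphere $S_n$ is a simple cycle and each annulus bounded by $S_{n-1}$ and $S_n$ is triangulated, Euler's formula forces the number of edges joining $S_n$ to $S_{n+1}$ to equal $s_n+s_{n+1}$; substituting this into $\sum_{v\in S_n}\deg v$ yields
\[
s_{n+1} - 2 s_n + s_{n-1} = \sum_{v \in S_n} (\deg v - 6), \qquad n \ge 2,
\]
with a base-case correction of $+1$ at the center coming from the cone/disk around $v_0$. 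Thus $s_n$ is piecewise linear in $n$: its forward difference $\delta_n = s_{n+1}-s_n$ is constant on each degree-$6$ block and jumps by exactly $d_k\,|S_{H_k}|$ as $n$ crosses the $k$-th special sphere $S_{H_k}$, where $H_k = h_1 + \cdots + h_k$.

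Next I would solve this recurrence and identify it with \eqref{E:LCP}. Writing $c_k := |S_{H_k}|/\deg v_0$ and the normalized slope $\sigma_k := 1 + \sum_{j \le k} d_j c_j$, the piecewise-linear solution gives $|S_{H_k}| - |S_{H_{k-1}}| = h_k\,(\deg v_0)\,\sigma_{k-1}$ together with $\sigma_k = \sigma_{k-1} + d_k c_k$. Eliminating $\sigma$ reproduces precisely the recurrence \eqref{E:LCP} with $c_1 = h_1$, so the sequence $c_n$ of the theorem is nothing but $|S_{H_n}|$ rescaled by $\deg v_0$; in particular $c_n \asymp |S_{H_n}|$. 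On each block the sphere sizes therefore run through an arithmetic progression with common difference $(\deg v_0)\,\sigma_{k-1}$ from $|S_{H_{k-1}}|$ to $|S_{H_k}|$, which is the form I need for the summation estimate.

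I would then invoke two type criteria on opposite sides. For the direction ``\eqref{E:sumeq} diverges $\Rightarrow$ CP parabolic'' I use Rodin--Sullivan (Theorem~\ref{T:RoSull}): once $\sum_n 1/s_n = \infty$ is established, $G$ is CP parabolic. For the converse, ``\eqref{E:sumeq} converges $\Rightarrow$ CP hyperbolic,'' I argue the contrapositive through Theorem~\ref{HS2}(a): from the ball boundary profile I build a nondecreasing $g$ with $g(|B_n|) \approx |S_{n+1}|$, so that telescoping $|B_{n+1}|-|B_n| = s_{n+1}$ against $g(|B_n|)^2 = s_{n+1}^2$ gives $\sum_m 1/g(m)^2 \asymp \sum_n 1/s_n < \infty$; were $G$ CP parabolic, the perimetric inequality $|dS| \ge g(|S|)$ would force $\sum 1/g(n)^2 = \infty$, a contradiction, hence $G$ is CP hyperbolic. (A radially symmetric circle packing realizing $G$ gives an alternative, purely geometric route to the same hyperbolicity statement.)

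The crux is the two-sided comparison $\sum_n 1/s_n = \infty \Leftrightarrow \eqref{E:sumeq}$. Summing the block-$k$ arithmetic progression and bounding it above and below by the corresponding integral gives
\[
\sum_{n = H_{k-1}+1}^{H_k} \frac{1}{s_n} \asymp \frac{1}{(\deg v_0)\,\sigma_{k-1}}\, \ln \frac{|S_{H_k}|}{|S_{H_{k-1}}|},
\]
and the remaining task is to show that this is comparable to $\ln h_k /(d_{k-1}c_{k-1})$, i.e.\ that $\sigma_{k-1} \asymp d_{k-1} c_{k-1}$ and $\ln\bigl(|S_{H_k}|/|S_{H_{k-1}}|\bigr) \asymp \ln h_k$. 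I expect this to be the main obstacle: both comparisons are clean when the products $d_k c_k$ grow geometrically, but they degenerate in the slow-growth regime, where the accumulated slope $\sigma_{k-1}=1+\sum_{j\le k-1}d_jc_j$ is dominated by earlier jumps rather than by its last term, and where $h_k$ is small — most sharply when $h_k=1$, so that $\ln h_k=0$ and block $k$ is a single sphere contributing $1/|S_{H_k}|$ that the series \eqref{E:sumeq} ignores. Showing that these single-sphere blocks never disturb the equivalence (because $c_k$ is then forced to grow at least quadratically, making $\sum 1/c_k$ converge on its own) is the delicate analytic point. The second, softer obstacle is the isoperimetric input for the hyperbolic direction: Theorem~\ref{HS2}(a) demands $|dS|\ge g(|S|)$ for \emph{all} connected finite $S\supset S_0$, so one must verify that in a layered triangulation balls (approximately) minimize boundary for a given volume, which I would establish by a symmetrization/sweeping argument exploiting the radial symmetry of $G$.
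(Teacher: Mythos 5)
Your setup and your parabolic direction track the paper closely: the identification of $c_n$ with the rescaled sphere sizes $|S_{h_1+\cdots+h_n}|/\deg v_0$ via the coupled recurrences for the sphere sizes and their increments is exactly the paper's computation (which rests on the no-cut-locus identity $|S_n|=\sum_{j=0}^{n-1}(k_j+6)$, valid here because $\deg v\ge 6$), the block-by-block comparison of $\sum_n 1/|S_n|$ with $\sum_n \ln h_n/(d_{n-1}c_{n-1})$ is the same estimate, and Rodin--Sullivan finishes that half. You also correctly flag the $h_k=1$ degeneracy; the paper disposes of it by showing the series with $\ln(h_n+1)$ in place of $\ln h_n$ has the same convergence behavior, using $c_n\ge 2c_{n-1}$ so that $\sum 1/c_n<\infty$ automatically.

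The genuine gap is in the hyperbolic direction. Your route through Theorem~\ref{HS2}(a) requires the perimetric inequality $|dS|\ge g(|S|)$ for \emph{every} connected finite $S\supset S_0$, and you defer this to a ``symmetrization/sweeping argument'' showing balls approximately minimize boundary for given volume. That discrete isoperimetric claim is not established anywhere, is not a routine consequence of radial symmetry, and is really the entire content of that implication: in general $\sum_n 1/|S_n|<\infty$ does \emph{not} imply CP hyperbolicity for disk triangulation graphs, so something beyond the sphere-size computation is unavoidable. The paper explicitly notes that the only known argument of this flavor (Dennis--Williams) works just for bounded $d_n$, and it takes a different route entirely: following Siders, it builds an auxiliary graph $\mathcal{A}$ by attaching \emph{separated} $d_k$-fold triangular meshes generation by generation, together with a graph homomorphism $\phi:\mathcal{A}\to G$ in which every vertex has at most two preimages; it then computes the vertex extremal length $\mathrm{VEL}(w_0,\mathcal{T}_n)$ on $\mathcal{A}$ \emph{exactly}, using the square-tiling extremal metrics of Lemma~\ref{lemma1} and the uniqueness of extremal metrics to propagate them across generations, arriving at
\[
\mathrm{VEL}(w_0,\mathcal{T}_n)\;\le\; 1+\frac{\ln(h_1+1)}{k_0+6}+\frac{1}{k_0+6}\sum_{k=2}^{n}\frac{\ln(h_k+1)}{d_{k-1}c_{k-1}},
\]
so that convergence of \eqref{E:sumeq} makes $\mathcal{A}$ VEL hyperbolic and hence $G$ CP hyperbolic by Theorem~\ref{list}(d),(e). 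To salvage your approach you would either have to prove the isoperimetric inequality for arbitrary connected subgraphs of a layered triangulation (a substantial separate theorem) or replace that step by an extremal-length or flow argument of the kind the paper uses.
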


We will show in Section~\ref{S:LCP} that $c_n = (\deg v_0) \cdot |S_{h_1 + h_2 + \cdots + h_n}|$, so Theorem~\ref{T:LCP} is not much different from what Siders claimed in 
the abstract of \cite{Si98}. Moreover, there is  some improvement  in Theorem~\ref{T:LCP} because the 
exact recurrence relation for $c_n$ is given, and more importantly, the criterion is proved without any boundedness assumption for $d_n$.
Although it seems almost impossible to solve the sequence $c_n$ explicitly, the recurrence relation \eqref{E:LCP} is useful because it helps us to prove the following corollary, which claims that
the appropriate quantity we have to consider for CP types of layered circle packings is  essentially the left-hand side of \eqref{E:LCP-P}.

\begin{corollary}\label{C:LCP}
Suppose $G$ is a disk triangulation graph associated with a layered circle packing determined by the sequence $(h_k, d_k)$. Then $G$ is CP hyperbolic if 
\[
\sum_{n=2}^\infty \frac{\ln h_n}{\prod_{k=1}^{n-1} d_k h_k } < \infty.
 \]
 Furthermore, if $\sum_{n=1}^\infty 1/ h_{n}$ is convergent and
 \[
 \sum_{n=2}^\infty \frac{\ln h_n}{ \prod_{k=1}^{n-1} d_k h_k} = \infty,
 \]
 then $G$ is CP parabolic.
 \end{corollary}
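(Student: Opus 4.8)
The plan is to deduce Corollary~\ref{C:LCP} directly from the sharp criterion of Theorem~\ref{T:LCP} by comparing the quantity $d_{n-1}c_{n-1}$, which governs CP type there, with the more transparent product $\prod_{k=1}^{n-1} d_k h_k$. To this end I would introduce the partial sum $Q_n := 1 + d_1 c_1 + d_2 c_2 + \cdots + d_n c_n$ with $Q_0 = 1$, so that the recurrence \eqref{E:LCP} reads $c_n = c_{n-1} + h_n Q_{n-1}$ and $Q_n = Q_{n-1} + d_n c_n$. Since the $c_k, d_k, h_k$ are positive integers (indeed $d_k \ge 1$ and $h_k \ge 1$), these two relations drive all the estimates, and the whole argument reduces to sandwiching $d_{n-1}c_{n-1}$ between constant multiples of $\prod_{k=1}^{n-1} d_k h_k$.

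For the hyperbolic half no growth hypothesis is needed. From $c_n \ge h_n Q_{n-1}$ I obtain $Q_n \ge (1 + d_n h_n)Q_{n-1} \ge d_n h_n Q_{n-1}$, and iterating from $Q_0 = 1$ yields $Q_{n-1} \ge \prod_{k=1}^{n-1} d_k h_k$; combined with $d_{n-1}c_{n-1} \ge d_{n-1} h_{n-1} Q_{n-2}$ this gives the lower bound $d_{n-1}c_{n-1} \ge \prod_{k=1}^{n-1} d_k h_k$. Hence $\sum \ln h_n /(d_{n-1}c_{n-1}) \le \sum \ln h_n / \prod_{k=1}^{n-1} d_k h_k < \infty$, so by Theorem~\ref{T:LCP} the graph is not CP parabolic, that is, CP hyperbolic.

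For the parabolic half I would produce the matching upper bound. Because $Q_{n-1} \ge d_{n-1}c_{n-1} \ge c_{n-1}$, the recurrence gives $c_n \le (1 + h_n)Q_{n-1}$, whence $Q_n \le (1 + d_n + d_n h_n)Q_{n-1} \le d_n h_n (1 + 2/h_n)Q_{n-1}$, using $d_n \ge 1$. Telescoping then yields $Q_{n-1} \le \bigl(\prod_{k=1}^{n-1}(1 + 2/h_k)\bigr)\prod_{k=1}^{n-1} d_k h_k$, and here the hypothesis $\sum 1/h_n < \infty$ is exactly what forces the first product to converge to a finite constant $M$. Thus $d_{n-1}c_{n-1} \le Q_{n-1} \le M \prod_{k=1}^{n-1} d_k h_k$, so that $\sum \ln h_n /(d_{n-1}c_{n-1}) \ge M^{-1}\sum \ln h_n / \prod_{k=1}^{n-1} d_k h_k = \infty$, and Theorem~\ref{T:LCP} delivers CP parabolicity.

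The only genuinely delicate point is this last upper bound: if $h_n$ stays bounded then $\sum 1/h_n$ diverges, the product $\prod(1 + 2/h_k)$ grows like $e^{2\sum 1/h_k}$, and one is left with an unavoidable exponential factor of type $C^{n}$ in front of $\prod_{k=1}^{n-1} d_k h_k$, which would wreck the comparison. The assumption $\sum 1/h_n < \infty$ in the parabolic statement is precisely the device that suppresses this blow-up, and its absence from the hyperbolic statement matches the fact that only the hypothesis-free lower bound is needed there.
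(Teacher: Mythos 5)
Your proposal is correct and follows essentially the same route as the paper: both halves reduce to sandwiching $d_{n-1}c_{n-1}$ between $\prod_{k=1}^{n-1}d_k h_k$ and a constant multiple thereof, with the lower bound coming for free from the recurrence and the upper bound requiring $\sum 1/h_n<\infty$ to tame a convergent error product before invoking Theorem~\ref{T:LCP}. Your bookkeeping via $Q_n=1+\sum_{k\le n}d_kc_k$ is only a cosmetic repackaging of the paper's identity $c_n=(1+\alpha_n)h_nd_{n-1}c_{n-1}$ with $\alpha_n\le 1/h_n+1/h_{n-1}$.
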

 \begin{proof}
Since $c_n \geq (1+ h_n d_{n-1}) c_{n-1} \geq h_n d_{n-1} c_{n-1}$ for every $n \geq 2$, the hyperbolic part easily comes from Theorem~\ref{T:LCP}.
Now suppose $h_n$ grows rapidly so that the series $\sum_{n=1}^\infty 1/ h_{n}$ converges.  From \eqref{E:LCP} we obtain that
\begin{equation}\label{c_n}
\begin{aligned}
c_n & = c_{n-1}+ h_n d_{n-1}   c_{n-1} + h_n ( 1+ d_1 c_1 + \cdots + d_{n-2} c_{n-2}) \\
       & = c_{n-1} + h_n d_{n-1}  c_{n-1} + \frac{h_n}{h_{n-1}} (c_{n-1} - c_{n-2})\\
       & = (1+ \alpha_n) h_n d_{n-1} c_{n-1},
\end{aligned}\end{equation}
where
\[
0 \leq \alpha_n = \frac{1}{h_n d_{n-1}} + \frac{1}{h_{n-1} d_{n-1}} \cdot \frac{ c_{n-1} - c_{n-2}}{c_{n-1}} \leq \frac{1}{h_n} + \frac{1}{h_{n-1}}
\]
for $n \geq 3$, because $d_k \geq 1$ for every $k$ and $c_k$ is an increasing positive sequence. Then since the series $\sum 1/ h_{n}$ converges,  both  the  series  $\sum \alpha_n$
and the infinite product $\prod_{n=3}^\infty ( 1+ \alpha_n)$  converge as well. Thus we get from \eqref{c_n} that
$d_{n} c_{n} \leq \beta \cdot \prod_{k=1}^{n} d_k h_k$ for some absolute positive constant $\beta$. Now the parabolic part of the corollary follows from Theorem~\ref{T:LCP},
and this completes the proof.
\end{proof} 

The condition $\sum 1/h_n < \infty$, or some other conditions similar to it, cannot be omitted from the parabolic part of Corollary~\ref{C:LCP}. 
To see this, let $d_n =1$ for all $n$, and $h_n$ be a sequence of natural numbers with a subsequence $h_{n_k}\geq 2$ which appears scarcely and grows rapidly. 
We also assume that  $h_n =2$ unless $n = n_k$ for some $k$. Then definitely both the infinite sum $\sum 1/h_n$ and the infinite product  $\prod_{n} (1+ \alpha_n)$  diverge
because $\alpha_n \geq 1/h_n$, where $\alpha_n$ is the sequence in \eqref{c_n}. 
This means that $c_{n} \gg \prod_{k=1}^n h_k$ for sufficiently large $n$, hence we can manipulate $h_{n_k}$ so that
\[
 k \cdot \prod_{j=1}^{n_k -1} h_j \leq  c_{n_k-1} \quad \mbox{and} \quad \frac{1}{k} \leq \frac{\ln h_{n_k}}{ \prod_{j=1}^{n_k -1} h_j} \leq \frac{2}{k}.
\]
Now Theorem~\ref{T:LCP} implies that the graph $G$  is CP hyperbolic because 
\begin{align*}
 \sum_{n=2}^\infty \frac{\ln h_n}{c_{n-1}} & = \sum_{n \ne n_k} \frac{\ln h_n}{c_{n-1}} +  \sum_{k=1}^\infty \frac{\ln h_{n_k}}{c_{n_{k}-1}}\\
 & \leq \sum_{n=1}^\infty \frac{\ln 2}{ 2^{n-1}} + \sum_{k=1}^\infty \frac{2}{k^2} < \infty,
  \end{align*}
where we used the inequality $c_{n-1} \geq \prod_{j=1}^{n -1} h_j \geq 2^{n-1}$, but definitely we have 
 \[
 \sum_{n=2}^\infty \frac{\ln h_n}{ \prod_{j=1}^{n-1}  h_j} \geq \sum_{k=1}^\infty \frac{\ln h_{n_k}}{ \prod_{j=1}^{n_k-1} h_j} \geq 
 \sum_{k=1}^\infty \frac{1}{k} = \infty.
 \]

\medskip

We will prove Theorem~\ref{T:LCP} in Section~\ref{S:LCP}, where it is first proved that the left-hand side of \eqref{E:sumeq} converges if and only if the sum
$\sum 1/|S_n|$ converges. Therefore one direction of Theorem~\ref{T:LCP} comes from Theorem~\ref{T:RoSull} (Rodin and Sullivan's Theorem).
We need some extra works for the other direction, however, since we do not assume that vertex degrees are bounded. Hence we will modify Siders' method and show that
the vertex extremal distance (cf.\ Section \ref{S:VEL}) of the graph is finite when the left-hand side of \eqref{E:sumeq} is finite, with some aids of Schamm's work on
square tilings \cite{Sch93}. Because vertex extremal distance is finite if and only if the considered disk triangulation graph $G$ is CP hyperbolic, this will complete the proof. 

\subsection{Structure of the paper}
This paper is organized as follows. In Section~\ref{prelim} we explain some terminology and notation used in this paper, among which there are several unusual notation such 
as the region $D(S)$ corresponding to a subgraph $S$, and various boundaries $b S$, $d S$, and $\partial S$ of $S$. In Section~\ref{S:CGB} the concept of
combinatorial curvatures and two versions of the combinatorial Gauss-Bonnet theorem will be introduced. Though there is no serious proof in  Section~\ref{S:CGB},
the combinatorial Gauss-Bonnet theorem will play the most important role in this paper, and in \cite{Oh20+}.
Our main results, Theorem~\ref{T1} and Theorem~\ref{T:Hyp}, will be proved in Section~\ref{S:T1} and Section~\ref{S:T2}, respectively. The pros and cons of
Theorem~\ref{T:Hyp} (or Corollary~\ref{C:useless}) are explained in Section~\ref{S:doubt}, where two more criteria for CP hyperbolicity will be deduced from some
previously known results. Experts may skip Section~\ref{S:VEL}, because the materials in this section are vertex extremal length, square tilings, 
and some previously known results. In Section~\ref{S:des} we construct an example for Theorem~\ref{T:S}. The paper finishes in Section~\ref{S:LCP}, where we study
layered circle packings and prove Theorem~\ref{T:LCP}.  

\section{Preliminaries}\label{prelim}
A graph $G$ is a pair of the vertex set $V= V(G)$ and the edge set $E = E(G) \subset V \times V$, and because we only consider undirected graphs, we always assume
that an edge $[v, w] \in E$ is the same as the edge $[w, v]$, where $v, w \in V$. A graph is called finite or infinite according as the cardinality of the vertex set. 
A \emph{planar} graph is a graph that is already embedded into the plane $\mathbb{C}$ (if it is infinite) or the Riemann sphere $\hat{\mathbb{C}} = \mathbb{C} \cup \{ \infty \}$
(if it is finite)  \emph{locally finitely},  which means that every compact set in $\mathbb{C}$ or $\hat{\mathbb{C}}$, respectively, intersects only finitely many vertices and edges of the graph.  
Strictly speaking, a planar graph is different from its embedded graph, but we do not distinguish them and use the term `planar graph' for embedded graphs. 

Let $G$ be a planar graph embedded into $D$, where $D$ is either $\mathbb{C}$ or $\hat{\mathbb{C}}$. The closure of a component of $D \setminus G$ is called a \emph{face}
of $G$, and we denote by $F = F(G)$ the set of faces of $G$. Since each planar graph is determined by the triple $(V, E, F)$, we identify it with $G$ and use the notation
$G=(V, E, F)$.

Note that every face of $G$ is a closed set with respect to the Euclidean topology. Similarly we will always regard each 
vertex and edge of $G$ as a closed set.  Two objects
in $V \cup E \cup F$ are called \emph{incident} if one is a proper subset of the other, and two distinct vertices $v, w \in V$ are called \emph{neighbors}
if they are incident to  the same edge, that is, if $[v, w] \in E$. A \emph{path}  is a sequence $[\ldots, v_{n-1}, v_{n}, v_{n+1}, \ldots ]$ of vertices in $V$ such that
$[v_n, v_{n+1}] \in E$ for all $n$, where the sequence could be finite, or infinite in one direction or both directions. 
If it is finite,  the \emph{length} of the path is the number of edges traversed by those moving from the initial vertex to the terminal vertex; i.e., the length of the path $[v_0, v_1, \ldots, v_n]$ is $n$. 
 A \emph{cycle} is a finite path $[v_0, v_1, \ldots, v_n]$ such that $v_0 = v_n$,
and a  path is called \emph{simple} if every vertex appears in the sequence at most once except the case that the path is a cycle and the only repetition is
the initial and terminal vertices. A \emph{facial walk} of a face $f \in F$ is the cycle (or a union of cycles) 
which is obtained by walking along the topological boundary of $f$ in the positive direction (Figure~\ref{F:facialwalk}).

\begin{figure}[t]
\centerline{
\hfill \subfigure[a triangular face]{\scalebox{0.8}{\input{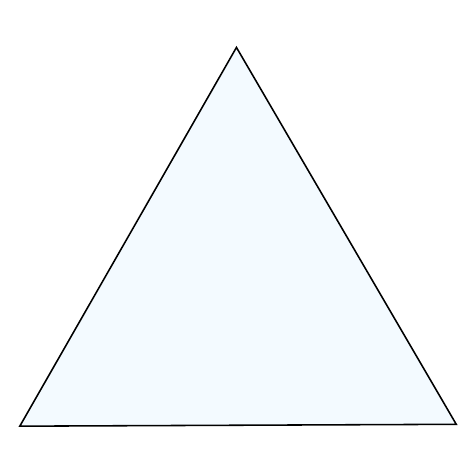_t}}} \hfill
\subfigure[a face in a non tessellation graph]{\scalebox{0.8}{\input{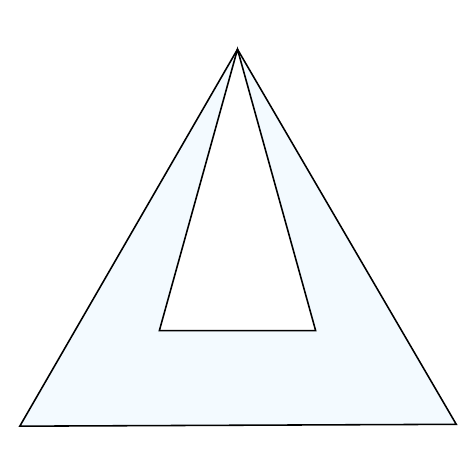_t}}} \hfill
\subfigure[a face in a disconnected graph]{\scalebox{0.8}{\input{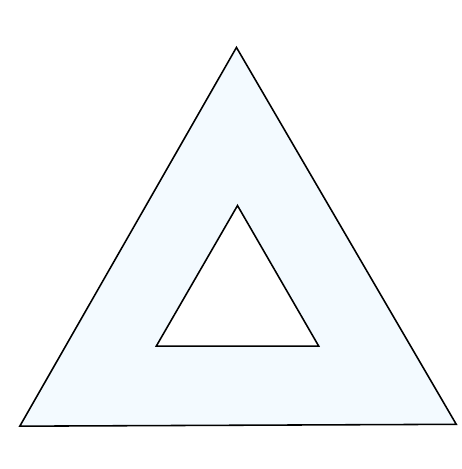_t}}} \hfill
}
\caption{Some faces of planar graphs. The facial walks are, from the left, $[ v_1, v_2, v_3, v_1]$, $[v_1, v_2, v_3, v_1, v_5, v_4, v_1]$, and $[v_1, v_2, v_3, v_1] \cup [w_1, w_3, w_2, w_1]$.
(The faces in (b) and (c) will not be considered in subsequent sections.)}\label{F:facialwalk}
\end{figure}

In graph theory a \emph{path} usually refers to a \emph{simple path}, and the term \emph{walk} is used for an object that is essentially the same as a \emph{path}  
in our definition (cf.\ \cite{BoMur, Dies}). But we use the term \emph{path} even though it has some repetitions of vertices, 
because  we believe that this is a convention for the community of  circle packing theory. 
On the other hand, we will use the word `walk' for either a path or a union of paths, such as a facial walk defined  above.
For cycles, we do not require  them to be simple, and we even allow cycles of length zero or two; i.e., a cycle could be of the form $[v_0]$ or $[v_0, v_1, v_2=v_0]$,
respectively. (If $G$ has a self loop $[v, v]$ then a cycle of length one is also allowed, but we will not be bothered by this `bad' case since our works will stay mostly on triangulations, or
at least tessellations, defined below.)

The \emph{degree} of a vertex $v \in V$ is the number of  edges incident to $v$, for which we use the notation $\deg v$.
Similarly the \emph{degree} or \emph{girth} of a face $f \in F$ is the number of edges incident to $f$, denoted by $\deg f$. In fact,  for the degrees
we have to count the number of incident edges using \emph{multiplicities}; i.e., for example if a vertex $v \in V$ is incident to a self loop $e=[v, v] \in E$, then $e$ should be counted twice in
the computation of $\deg v$. (As mentioned in the previous paragraph, this bad case will not appear in this paper except the current section.)

A graph is  called \emph{connected} if every two vertices can be joined by a path, and  \emph{simple} if it has no self loop nor multiple edges; i.e., every edge is
incident to two distinct vertices and there is at most one edge connecting two vertices. Following \cite{BP01, BP06}, we call a connected simple planar graph a \emph{tessellation} or \emph{tiling}
if the following three conditions hold: (a) every edge is incident to two distinct faces; (b) every two faces are either disjoint or intersect in one vertex or one edge; and (c) every face is homeomorphic to a closed disk (i.e., the facial walk of  each face is a simple cycle of length at least three--the face in Figure~\ref{F:facialwalk}(b) is not allowed for tessellations). 
Note that if the given graph is a tessellation,  then we have $\deg v \geq 3$  for every vertex $v$ and $\deg f \geq 3$ for every face $f$. Furthermore we have $\deg v < \infty$
for every $v \in V$ since we have assumed that a planar graph is already embedded into $\mathbb{C}$ (or $\hat{\mathbb{C}}$) locally finitely. 
Finally with the term \emph{disk  triangulation graph}, we mean a tessellation in $\mathbb{C}$ with infinitely many vertices such that $\deg f = 3$ for every face $f$.

 For $v, w \in V$, the combinatorial distance between $v$ and $w$, denoted by $d(v, w)$,
is the minimum of the lengths of paths connecting $v$ and $w$. If we need to consider two or more graphs simultaneously,  we will use the notation $d_G (v, w)$ instead of $d(v, w)$.
 For $n \in \mathbb{N}$ and $v_0 \in V$, the combinatorial ball $B_n(v_0)$ (or the combinatorial sphere $S_n(v_0)$) 
of radius $n$ and centered at $v_0$ is defined by the set of vertices whose combinatorial distance from $v_0$ is at most $n$ (or is equal to $n$, respectively). If there is no confusion,
the notation $B_n$ and $S_n$ will be used instead of $B_n (v_0)$ and $S_n (v_0)$ as we already did in the introduction. We also have the convention that $B_0 = S_0 = \{ v_0 \}$.

Now let $G=(V, E, F)$ be a connected (infinite) planar graph. A planar graph $S$ is called a \emph{subgraph} of $G$ if $V(S) \subset V$ and $E(S) \subset E$. In this case we assume that
the face set $F(S)$ of $S$ is the \emph{subset} of $F$ which is defined as follows: $f \in F$ belongs to $F(S)$ if and only if  all the edges incident to $f$ are in $E(S)$. This notation might be confusing,
since  $F(S)$ is in fact the intersection of $F$ and the set of closures of components of $\mathbb{C} \setminus S$ (i.e., the intersection of $F$ and the face set of $S$, where $S$ is considered 
a planar graph by itself). We use the notation $S \subset G$ if $S$ is a subgraph of $G$. We call $S \subset G$ an \emph{induced} subgraph if it is induced by its vertex set; that is,
$S$ is an induced subgraph if we have $[v, w] \in E(S)$ whenever $v, w \in V(S)$ and $[v, w] \in E = E(G)$. 

If $X$ is a subset of $V$, we will treat $X$  not only as a subset of $V$ but also as an induced subgraph of $G$. For example we have defined $B_n (v_0)$ as the set of vertices  whose combinatorial
distance from $v_0$ is at most $n$, but we will also consider $B_n$  the subgraph of $G$ induced by the vertices in $B_n$. Conversely if $S$ is an induced subgraph of $G$, then
we do not distinguish $S$ from $V(S)$ and use the notation $|S|$ for $|V(S)|$, where $| \cdot |$ is the cardinality of the given set. (This convention might be used 
even when $S$ is not induced.) Also we will treat a path $[\ldots, v_{n-1}, v_{n}, v_{n+1}, \ldots ]$  as a subgraph of $G$ with vertices $v_i$ and edges $[v_i, v_{i+1}]$.

Let $S$ be a subgraph of $G$. We define $d S$, the vertex boundary of $S$, as the set of vertices in $V \setminus V(S)$ which have neighbors in $V(S)$. We also define
$\partial S$, the edge boundary of $S$, as the set of edges in $E$ that are incident to one vertex in $V(S)$ and another vertex in $V \setminus V(S)$. Finally we define $b S$ as the 
boundary walk of $S$; i.e., $b S$ is a union of cycles which are obtained as one walks along the topological boundary of 
\begin{equation}\label{D(S)}
D(S) := S \cup  \left(\bigcup_{f \in F(S)} f \right)=   \left( \bigcup_{v \in V(S)} v  \right) \cup \left( \bigcup_{e \in E(S)} e \right) \cup  \left( \bigcup_{f \in F(S)} f  \right)
\end{equation}
 in the positive direction. 
Then one can easily see that if $S$ is connected and $\mathbb{C} \setminus D(S)$ has $m$ components,  $b S$ can be written as a union of $m$ cycles,
each of which corresponds to the topological boundary of a component of $\mathbb{C} \setminus D(S)$ (Figure~\ref{F:D(S)}). Note that $b S$ is considered a subgraph of $G$,
so notation like $V(b S)$ or $E(b S)$ will be used in subsequent sections.

\begin{figure}[t]
\begin{center}
\input{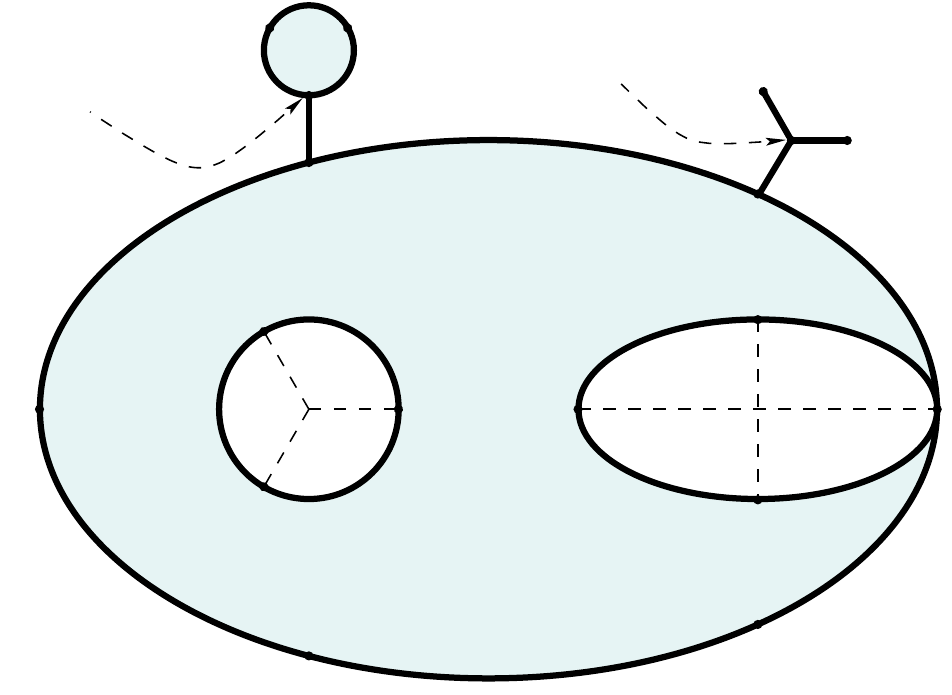_t}
\caption{The region $D(S)$ corresponding to a subgraph $S$. In this figure
the boundary walk $bS$  can be written as $\Gamma_1 \cup \Gamma_2 \cup \Gamma_3$, where 
$\Gamma_1=[v_1, v_2, \ldots, v_7, v_6, v_8, v_6, v_5, v_9, \ldots, v_{12}, v_{10}, v_9, v_1]$, $\Gamma_2 = [u_1,u_2,u_3, v_4, u_1]$,
and $\Gamma_3 = [w_1,w_2,w_3,w_1]$, each of which corresponds to the topological boundary of a component of $\mathbb{C} \setminus D(S)$.}\label{F:D(S)}
\end{center}
\end{figure}

\section{Combinatorial Gauss-Bonnet Theorem} \label{S:CGB}
Let  $G= (V, E, F)$ be a tessellation such that $\deg f < \infty$ for every $f \in F$. 
For $v \in V$, we define the \emph{combinatorial curvature} at $v$  by
\[
\kappa (v) := 1 - \frac{\deg v}{2} + \sum_{f \sim v} \frac{1}{\deg f},
\]
where the summation is over all the faces incident to $v$. 

To explain the meaning of combinatorial curvature, we associate each face $f \in F$ with a  regular ($\deg f$)-gon of side lengths one, and 
paste them along sides exactly by the way that the corresponding faces are lying on $\mathbb{C} \supset G$. Then the resulting surface, which we denote by $\mathcal{S}_G$,
becomes so-called a \emph{polyhedral surface}, which naturally contains $G$ and is locally Euclidean everywhere except  the singularities at vertices of $G$. 
(See \cite{AZ, Res} for more about polyhedral surfaces.) Moreover,
one can easily check that the \emph{total angle} $T(v)$ at each vertex $v \in V \subset \mathcal{S}_G$ is $\sum_{f \sim v} (\pi - 2\pi/ \deg f)$, hence the atomic curvature at $v$ is 
\begin{equation}\label{curvertex}
\omega \bigl( \{ v \} \bigr) = 2 \pi - T(v) = 2 \pi - \sum_{f \sim v} \left( \pi - \frac{2 \pi}{\deg f} \right)  = 2 \pi \cdot \kappa(v).
\end{equation}
Here $\omega$ denotes the integral curvature defined on each Borel set in $\mathcal{S}_G$.
Thus the combinatorial curvature is just the usual integral curvature of the surface $\mathcal{S}_G$, but it is normalized so that $2 \pi$ becomes $1$. (The normalization was done 
because we just deal with combinatorial properties and do not want to have $2 \pi$ in every formula.) 

It is not clear when the concept of combinatorial curvature emerged. For 3-polytopes,  it seems to have been considered even by Descartes (without the normalization) \cite{Fed82}. 
Moreover, the quantity $-2 \kappa (v)$, called \emph{excess}, appears in a book by Nevanlinna  \cite[p.\ 311]{Nev},  which was originally published in the early 20th century. 
However, it is rather recently that combinatorial curvature has been   studied extensively, especially related to planar graphs. See for example  
\cite{BP01, BP06, Chen09, CC08,  DeMo07, Hig01, HJ15, HuS19,  Keller11, Oh14, Oh17, OS16, Sto76,  Woess98,  Zuk97} and the references therein.

When studying geometry of 2-surfaces, 
perhaps the Gauss-Bonnet Theorem is one of the most famous and useful theorems  related to curvature. It is not different in the study of geometric properties of planar graphs. 
For example, the following form of the Gauss-Bonnet Theorem can be found in many literature \cite[and more]{Chen09, DeMo07, Oh17}.

\begin{theorem}[Combinatorial Gauss-Bonnet Theorem-Basic Form]
Suppose $G$ is a connected simple finite graph embedded into a compact 2-manifold $\Omega$ without boundary. Then we have
\[
\kappa(G) = \sum_{v \in V(G)} \kappa(v) = \chi(\Omega),
\]
where $\chi(\Omega)$ denotes the Euler characteristic of $\Omega$.
\end{theorem}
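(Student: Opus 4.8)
The plan is to reduce the identity $\sum_{v} \kappa(v) = \chi(\Omega)$ to Euler's formula $|V| - |E| + |F| = \chi(\Omega)$ by a double-counting argument. First I would substitute the definition of $\kappa(v)$ and split the vertex sum into three pieces:
\[
\sum_{v \in V} \kappa(v) = \sum_{v \in V} 1 - \frac{1}{2} \sum_{v \in V} \deg v + \sum_{v \in V} \sum_{f \sim v} \frac{1}{\deg f}.
\]
The first sum is simply $|V|$. For the second, I would invoke the handshake lemma $\sum_{v} \deg v = 2 |E|$, which holds because $G$ is simple and each edge contributes $1$ to the degree of each of its two distinct endpoints; hence the middle term equals $-|E|$.

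The crux is the third, double sum. Here I would interchange the order of summation, viewing it as a sum over all incident vertex-face pairs (equivalently, over the corners of the faces):
\[
\sum_{v \in V} \sum_{f \sim v} \frac{1}{\deg f} = \sum_{f \in F} \frac{1}{\deg f} \sum_{v \sim f} 1 = \sum_{f \in F} \frac{1}{\deg f} \cdot \deg f = |F|,
\]
where the middle equality uses that a face of degree $\deg f$ has exactly $\deg f$ corners; that is, its facial walk visits $\deg f$ vertices counted with multiplicity, matching the $\deg f$ edges traversed. Combining the three evaluations yields $\sum_{v} \kappa(v) = |V| - |E| + |F|$.

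Finally I would appeal to Euler's formula for a $2$-cell embedding of a graph into a compact surface $\Omega$ without boundary, namely $|V| - |E| + |F| = \chi(\Omega)$, to close the argument. The main point to be careful about is the corner-counting in the third step: one must count each vertex with the multiplicity with which it occurs in the facial walk of $f$, so that $\sum_{v \sim f} 1 = \deg f$ precisely, and one must know that the embedding decomposes $\Omega$ into closed cells so that Euler's formula is applicable. For the tessellations and disk triangulation graphs that concern the remainder of the paper these hypotheses are automatic (indeed every facial walk is a simple cycle and every face is a disk), so this subtlety creates no real obstruction; the content of the theorem is thus essentially the bookkeeping that converts the angle-defect formulation of curvature into the Euler characteristic.
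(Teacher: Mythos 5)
Your proof is correct: the paper itself offers no proof of this statement (it is quoted from the literature, e.g.\ \cite{Chen09, DeMo07, Oh17}), and your double-counting reduction to Euler's formula --- $\sum_v 1 = |V|$, the handshake lemma giving $-|E|$, and the corner count $\sum_{v\sim f}1=\deg f$ giving $|F|$ --- is exactly the standard argument found there. You are also right to flag that the embedding must be cellular (and incidences counted with multiplicity) for Euler's formula and the corner count to apply; this hypothesis is implicit in the statement and automatic for the tessellations used in the rest of the paper.
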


The Gauss-Bonnet Theorem we  need, however, is a little bit more complicated and delicate than the basic form. In fact, we need those involving left turns(geodesic curvature) 
of the topological boundary of each subgraph. To explain it, let $G$ be an infinite tessellation in $\mathbb{C}$, 
and suppose $\Gamma = [v_0, v_1, v_2, \ldots, v_{n-1}, v_n=v_0]$ is a cycle in $G$ of length $n \geq 3$. 
For each $k=1,2, \ldots, n$, let $f_1^{(k)}, f_2^{(k)}, \ldots, f_{s_k}^{(k)}$ be the faces of $G$ incident to $v_k$
and \emph{lying on the right} of the path $[v_{k-1}, v_k, v_{k+1}]$, where we interpret $v_{n+1} = v_1$ if $k=n$. See Figure~\ref{F:leftturn}.
Then the \emph{outer left turn} occurred at $v_k$ is defined by
\begin{equation}\label{outerLT}
\tau_{o} (v_k) =  \sum_{j=1}^{s_k} \left( \frac{1}{2} - \frac{1}{\deg f_j^{(k)}} \right) - \frac{1}{2},
\end{equation}
and the outer left turn of $\Gamma$ is defined by the sum $\tau_{o} (\Gamma) = \sum_{k=1}^n \tau_{o} (v_k)$. Here we remark that if $v_{k-1} = v_{k+1}$ for some $k$, then 
depending on the orientation of $\Gamma$, either all the faces incident to $v_k$ should be considered lying on the right of the path or none of them should be 
considered on the right. However, we will not study the latter case throughout the paper, so in this case all the faces incident to $v_k$ should be 
considered on the right, hence we will have $s_k = \deg v_k$ and  $\tau_{o} (v_k) + \kappa (v_k) = 1/2$.
Finally if $\Gamma$ is a cycle of length zero, that is, if $\Gamma = [v_0]$ for some $v_0 \in V$,  we define $\tau_{o} (v_0) = \tau_{o} (\Gamma) = 1 - \kappa(v_0)$.

 \begin{figure}[t]
\centerline{
\hfill \subfigure[the case $v_{k-1} \ne v_{k+1}$]{\scalebox{.95}{\input{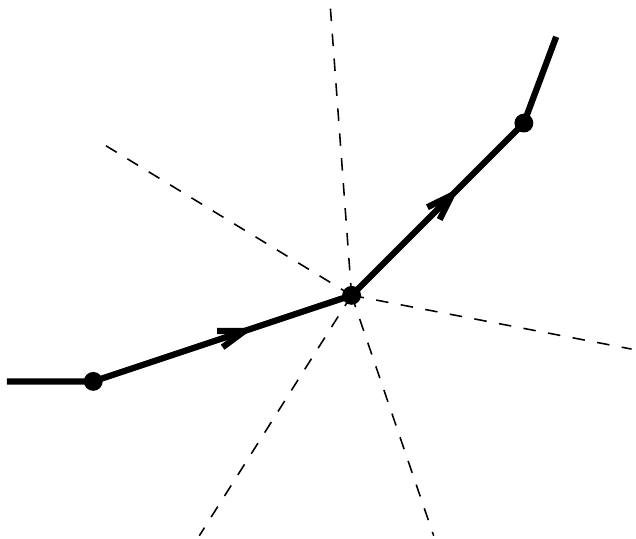_t}}} \hfill
\subfigure[the case $v_{k-1} = v_{k+1}$]{\scalebox{.95}{\input{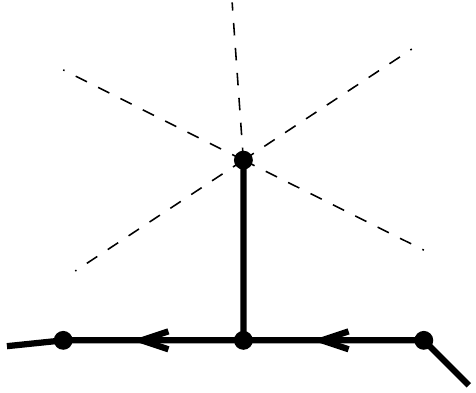_t}}} \hfill
}
\caption{Faces incident to $v_k$: the faces $f_j^{(k)}$ are on the right of $[v_{k-1}, v_k, v_{k+1}]$, and the faces $g_j^{(k)}$ are on the left of $[v_{k-1}, v_k, v_{k+1}]$.  
We do not consider the case (b) for inner left turns. That is, a path like  (b)  with opposite orientation will not appear throughout the paper.}\label{F:leftturn}
\end{figure}

Perhaps it would be better to use the notation like $\tau_o(v_k; v_{k-1}, v_{k+1})$ instead of $\tau_{o} (v_k)$, because the quantity in \eqref{outerLT}
 is not a function for $v_k \in V$ but a function for the triple $[v_{k-1}, v_k, v_{k+1}]$. However,  we will stick to the notation $\tau_o (v_k)$ for simplicity, as long as it does not cause confusion.
The same rule will apply to the next object,  \emph{inner left turn} of a cycle. For this,
we assume that the cycle $\Gamma = [v_0, \ldots, v_n= v_0]$ is of length $n \geq 3$ and $v_{k-1} \ne v_{k+1}$ for all $k =1,2, \ldots, n$, where we interpret $v_{n+1} = v_1$ as before.
For each $k$, let $g_1^{(k)}, g_2^{(k)}, \ldots, g_{t_k}^{(k)}$ be the faces of $G$ incident to $v_k$
and \emph{lying on the left} of the path $[v_{k-1}, v_k, v_{k+1}]$. Then the \emph{inner left turn} occurred at $v_k$ is
\begin{equation}\label{innerLT}
\tau_{i} (v_k) =   \frac{1}{2} - \sum_{j=1}^{t_k} \left( \frac{1}{2} - \frac{1}{\deg g_j^{(k)}} \right),
\end{equation}
and the inner left turn of $\Gamma$ is the sum $\tau_{i} (\Gamma) = \sum_{k=1}^n \tau_{i} (v_k)$. Note that if $\Gamma$ is simple, we have
$\tau_{i} (v_k) - \tau_{o} (v_k) = \kappa (v_k)$ for all $k$.

The meaning of outer  left turn is the following. Suppose a cycle $\Gamma = [v_0, \ldots, v_n]$ is given on the polyhedral surface $\mathcal{S}_G$.
Imagine that  a person stands  one step to the right  from $\Gamma$,  and walks side by side along $\Gamma$.
Then  $2 \pi \cdot \tau_{o} (v_k)$ would be the angle by which he or she turns to the left near $v_k$, and 
the total left turn made after a complete rotation along $\Gamma$ would be $2 \pi \cdot \tau_{o} (\Gamma)$. For the inner left turn, we think that this person stands one step to the left from $\Gamma$
and walks, and observe that $2 \pi \cdot \tau_{i} (v_k)$ is the left turn made near $v_k$.  Thus $2 \pi \cdot \tau_{i} (\Gamma)$ will beome the total left turn made after a complete rotation along $\Gamma$.

\begin{figure}[t]
\begin{center}
\input{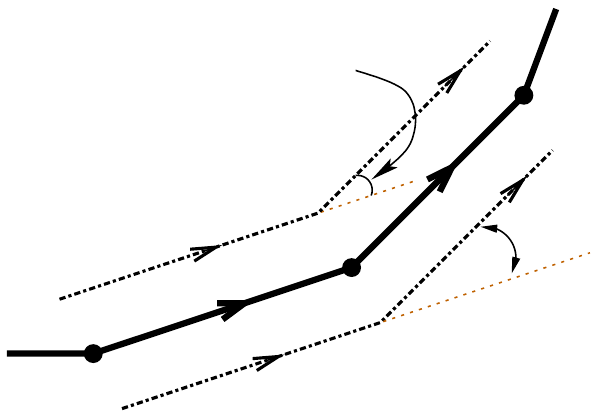_t}
\caption{Meaning of outer left turn and inner left turn.}\label{F:leftturn-meaning}
\end{center}
\end{figure}

We next suppose a connected finite subgraph $S \subset G$ is given, and let us assume that  $\mathbb{C} \setminus D(S)$ has $m$ components, where $D(S)$ is as in \eqref{D(S)}. 
Here $S$ does not have to be induced, but as we mentioned at the end of the previous section we can write
$bS = \Gamma_1 \cup \Gamma_2 \cup \cdots \cup \Gamma_m$,
where each $\Gamma_j$ is a cycle corresponding to the topological boundary of a component of $\mathbb{C} \setminus D(S)$.
Now we define the \emph{outer left turn}  of $b S$ by $\tau_{o} (b S) = \sum_{j=1}^m \tau_{o} (\Gamma_j)$ and obtain the following theorem.

\begin{theorem}[Combinatorial Gauss-Bonnet Theorem-Type I]\label{CGBT1}
Suppose $G$ is an infinite tessellation and $S \subset G$ a connected finite subgraph of $G$. Then we have
\begin{equation}\label{GBF-1}
\kappa(S) + \tau_{o} (b S) = \chi (S),
\end{equation}
where  $\chi (S) = |V(S)| - |E(S)| + |F(S)|$, the Euler characteristic of $S$, and $\kappa (S) = \sum_{v \in V(S)} \kappa (v)$.
\end{theorem}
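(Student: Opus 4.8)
The plan is to prove \eqref{GBF-1} by a direct double counting of the normalized interior angles of the faces in $F(S)$, using Euler's formula to produce $\chi(S)$ and the definition of $\tau_o$ to absorb the boundary contribution. Write $\beta(f) = \tfrac12 - \tfrac{1}{\deg f}$ for the normalized interior angle of a regular $(\deg f)$-gon, so that each $f \in F(S)$ carries total angle $\deg f \cdot \beta(f) = \tfrac{\deg f}{2} - 1$. The starting observation is that, since every vertex of a tessellation in $\mathbb{C}$ carries its full cycle of faces, the curvature rewrites as $\kappa(v) = 1 - \sum_{f \sim v}\beta(f)$, the sum being over all faces of $G$ incident to $v$; this is just \eqref{curvertex} reorganized.

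First I would compute $A := \sum_{f\in F(S)}\bigl(\tfrac{\deg f}{2} - 1\bigr)$ in two ways. Summing by faces and counting edge--face incidences (each edge of $E(S)$ meets two, one, or zero faces of $F(S)$) expresses $A$ through $|E(S)|$ split into interior, boundary, and ``whisker'' edges; combined with $|F(S)|$ this is where $\chi(S) = |V(S)| - |E(S)| + |F(S)|$ enters. Summing by vertices, I would group at each $v\in V(S)$ the faces of $F(S)$ incident to $v$ and write their total angle as $(1 - \kappa(v))$ minus the angle $B(v) := \sum_{f\sim v,\, f\notin F(S)}\beta(f)$ of the exterior faces at $v$. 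Equating the two evaluations of $A$ yields
\[
\kappa(S) - \chi(S) = \tfrac12 E_{b} + E_{w} - \sum_{v\in V(S)} B(v),
\]
where $E_{b}$ counts boundary edges (one incident $F(S)$-face) and $E_{w}$ counts whisker edges (no incident $F(S)$-face) of $S$.

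The second half of the plan is to recognize the right-hand side as $-\tau_o(bS)$. Walking along $bS = \Gamma_1\cup\cdots\cup\Gamma_m$, at each appearance of a vertex $v_k$ the faces ``on the right'' of $[v_{k-1},v_k,v_{k+1}]$ are precisely the exterior faces in the angular sector swept out there; over all appearances of a given $v$ each exterior face incident to $v$ is seen on the right exactly once (interior $F(S)$-faces always lie on the left), so $\sum_k \sum_j \beta\bigl(f_j^{(k)}\bigr) = \sum_v B(v)$. The $-\tfrac12$ in each $\tau_o(v_k)$ sums, over all vertex appearances, to $\tfrac12$ times the length of $bS$, which equals $\tfrac12 E_{b} + E_{w}$ since each boundary edge is traversed once and each whisker edge twice. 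Hence $\tau_o(bS) = \sum_v B(v) - \tfrac12 E_{b} - E_{w}$, and adding this to the displayed identity gives $\kappa(S) + \tau_o(bS) = \chi(S)$.

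The main obstacle I expect is the boundary bookkeeping in the degenerate cases, not the algebra: cut vertices of $S$ appear several times in $bS$ and their incident exterior faces must be correctly partitioned among those appearances, and edges of $S$ bounding no face of $F(S)$ are traversed twice, which is exactly the situation $v_{k-1}=v_{k+1}$ for which the convention $s_k=\deg v_k$, $\tau_o(v_k)+\kappa(v_k)=\tfrac12$ was reserved. Verifying ``each exterior face is counted once on the right'' and that the boundary length is $E_b + 2E_w$ in full generality is where the care is needed; once these incidences are pinned down, the theorem follows from Euler's formula. (Geometrically, the whole computation is the piecewise-geodesic Gauss--Bonnet theorem applied to the region $D(S)$ on the polyhedral surface $\mathcal{S}_G$, with $2\pi\kappa$ the concentrated curvature at vertices and the boundary turning recorded by the left turns.)
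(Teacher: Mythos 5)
Your proof is correct, but it takes a genuinely different route from the paper's. The paper proves \eqref{GBF-1} geometrically: it passes to the polyhedral surface $\mathcal{S}_G$, takes an $\epsilon$-neighborhood of $D(S)$, sharpens corners to get a polygonal region $P$ with $\tau(bP)=2\pi\tau_o(bS)$ and $\omega(P^\circ)=2\pi\kappa(S)$, and then invokes the Gauss--Bonnet theorem for polyhedral surfaces \eqref{GBT-S} together with the identification $\chi(P)=\chi(S)$. You instead give a purely combinatorial proof by double counting the normalized angles $\beta(f)=\tfrac12-\tfrac1{\deg f}$ over $F(S)$ and feeding the result into Euler's formula; the two evaluations of $A=\sum_{f\in F(S)}(\tfrac{\deg f}{2}-1)$ check out (the face-side count via edge--face incidences gives $|E(S)|-\tfrac12 E_b-E_w-|F(S)|$, the vertex-side count gives $|V(S)|-\kappa(S)-\sum_v B(v)$, using that a plane tessellation vertex carries a full cycle of faces so $\kappa(v)=1-\sum_{f\sim v}\beta(f)$), and your identification $\tau_o(bS)=\sum_v B(v)-\tfrac12 E_b-E_w$ is right, including in the whisker case $v_{k-1}=v_{k+1}$ where the paper's convention $s_k=\deg v_k$ makes the tip contribute $B(v)-\tfrac12$ and the edge is traversed twice. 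What your approach buys is self-containedness: no appeal to the Aleksandrov--Zalgaller Gauss--Bonnet theorem, no $\epsilon$-neighborhoods or corner-sharpening, and no need to argue that the topological Euler characteristic of $P$ equals the combinatorial quantity $|V(S)|-|E(S)|+|F(S)|$. What it costs is exactly the incidence bookkeeping you flag yourself --- that over all appearances of a cut vertex in $bS$ the right-sectors partition the exterior faces at that vertex, and that the total length of $bS$ is $E_b+2E_w$ --- which a complete write-up would have to verify carefully; the paper's geometric argument sidesteps this by letting the surface-level Gauss--Bonnet theorem absorb all the local combinatorics.
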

\begin{proof}
For a sufficiently small $\epsilon >0$, let $P$ be the polygonal region in $\mathcal{S}_G$ which is obtained from the $\epsilon$-neighborhood of $D(S) \subset \mathcal{S}_G$ by \emph{sharpening corners} (Figure~\ref{F:sharpening}). 
\begin{figure}[t]
\begin{center}
\input{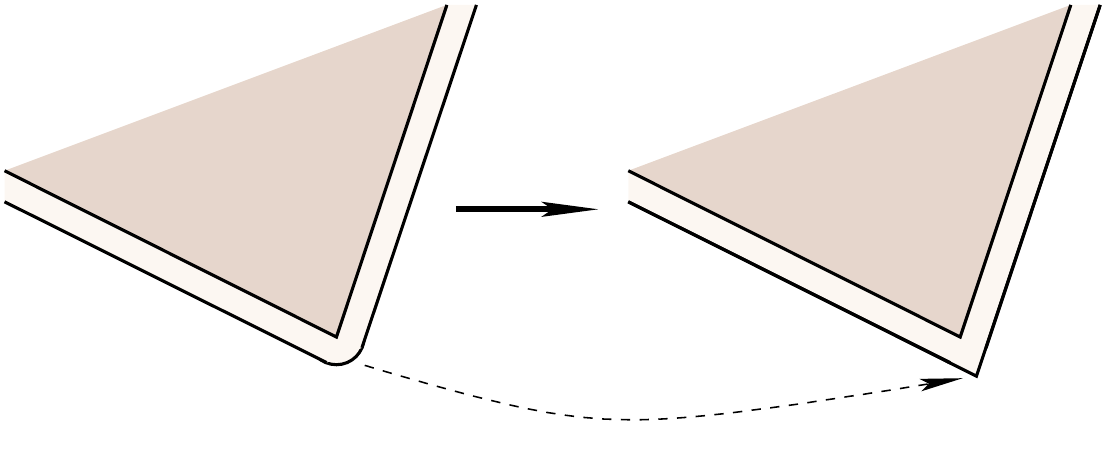_t}
\caption{Obtaining the polygonal region $P$ by sharpening corners. This operation does not change geodesic curvature of the boundary.}\label{F:sharpening}
\end{center}
\end{figure}
Let $b P$ be the topological boundary of $P$ and $\tau (b P)$ be the total left turn, or the geodesic curvature, of $b P$. Then one can easily check that
\[
\tau (b P) = 2 \pi \cdot \tau_{o} (b S).
\]

Note that the polyhedral surface $\mathcal{S}_G$ is locally Euclidean except the vertices of $G$. Thus we have $\omega (P^\circ) = 2 \pi \cdot \kappa (S)$ by \eqref{curvertex}, 
where   $P^\circ$  is the interior of $P$. Now 
Theorem~\ref{CGBT1} follows from the Gauss-Bonnet Theorem for polyhedral surfaces (cf. \cite[p.\ 214]{AZ})
\begin{equation}\label{GBT-S}
\omega (P^\circ) + \tau (b P) = 2 \pi \cdot \chi(P),
\end{equation}
because we have $\chi (P) = \chi (S)$.
\end{proof}

We will call \eqref{GBF-1}  the first combinatorial Gauss-Bonnet formula involving boundary turns, or GBF-1 for abbreviation. To deduce another combinatorial Gauss-Bonnet formula we need,
let a  finite subgraph $S \subset G$ be given, which is not necessarily connected. We remove from $S$ all the vertices and edges not incident to faces in $F(S)$, and obtain another subgraph $S_0$.
Note that $S_0$ consists of faces of $S$; i.e., we have $D(S_0) = \bigcup_{f \in F(S)} f$. Now in $\mathcal{S}_G$, we assume that $D(S_0)^\circ$, the interior of $D(S_0)$, has $m$ components.
Then we can write $b S_0 = \Gamma_1 \cup \cdots \cup \Gamma_m$, where each $\Gamma_j$ corresponds to the (topological) boundary of  a component of $D(S_0)^\circ$.
Thus each $\Gamma_j$ must be a union of cycles, so we can write $\Gamma_j = \Gamma_j^1 \cup \Gamma_j^2 \cup \cdots \cup \Gamma_j^{l_j}$, where each $\Gamma_j^k$ is a cycle
corresponding to a component of complements of the component of $D(S_0)^\circ$.

For example let $S$ be the graph in Figure~\ref{F:D(S)}. From $S$ we remove the vertices $v_6, v_7, v_8$ and edges $[v_5, v_6], [v_6, v_7], [v_6, v_8], [v_9, v_{10}]$ 
to obtain $S_0$. Then the region $D(S_0)^\circ$ has two components, and we can write $b S_0 = \Gamma_1 \cup \Gamma_2= (\Gamma_1^1 \cup \Gamma_1^2) \cup \Gamma_2$, 
where $\Gamma_1^1 = [v_1, v_2, v_3, v_4, u_1, u_2, u_3, v_4, v_5, v_9, v_1]$, $\Gamma_1^2 = [w_1, w_2, w_3, w_1]$, and $\Gamma_2 = [v_{10}, v_{11}, v_{12}, v_{10}]$.

Note that  every $\Gamma_j^k$ should be of length at least $3$ and every vertex in $\Gamma_j^k$ has distinct front and back vertices, 
hence  $\tau_i (\Gamma_j^k)$  is well-defined for all $k$. Thus $\tau_i (\Gamma_j) = \sum_{k=1}^{l_j} \tau_i (\Gamma_j^k)$ is also well-defined. 
Let $b_i S = b S_0$, which we call  the \emph{inner boundary walk} of $S$,  and define the \emph{inner left turn} of $b_i S$ by the sum $\tau_i (b_i S) = \sum_{j =1}^m \tau_i (\Gamma_j)$.
(To distinguish the boundary walk $b S$ from the inner boundary walk $b_i S$, we will call $b S$ the \emph{outer} boundary walk of $S$ whenever needed.)
In the proof of Theorem~\ref{CGBT2} below we will \emph{shrink} $D(S)$ slightly, so that the inner left turn $\tau_i (b_i S)$ 
becomes $2 \pi$ times the geodesic curvature of the boundary of the shrunken domain.
Finally let $S_{-} = V(S) \setminus V(b S)$, the set of \emph{interior vertices} of $S$, and $\chi (S_{-}) = \chi (D(S)^\circ)$, the Euler characteristic of the topological interior of $D(S)$.
Now we finish this section with the following form of the Gauss-Bonnet formula.

\begin{theorem}[Combinatorial Gauss-Bonnet Theorem-Type II]\label{CGBT2}
Suppose $G$ is an infinite tessellation and $S \subset G$ a finite subgraph of $G$. Then we have
\begin{equation}\label{GBF-2}
\kappa(S_{-}) + \tau_{i} (b_i S) = \chi (S_{-}),
\end{equation}
where $\kappa (S_{-}) = \sum_{v \in S_{-}} \kappa (v)$.
\end{theorem}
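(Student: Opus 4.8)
The plan is to mirror the proof of Theorem~\ref{CGBT1}, with the single change that instead of thickening $D(S)$ outward I \emph{shrink} the face region $D(S_0)=\bigcup_{f\in F(S)}f$ inward. Concretely, I would first pass from $S$ to $S_0$ exactly as in the paragraph preceding the statement, discarding every vertex and edge of $S$ that is not incident to a face of $F(S)$, so that $D(S_0)$ is a finite union of closed faces and $b_iS=bS_0$. For a sufficiently small $\epsilon>0$ I would then form, inside $\mathcal{S}_G$, the polygonal region $Q$ obtained by pushing the boundary $bS_0$ a distance $\epsilon$ into the interior of $D(S_0)$ and sharpening corners, in complete analogy with Figure~\ref{F:sharpening}; this $Q$ is a compact subsurface of $\mathcal{S}_G$ with piecewise geodesic boundary that deformation retracts onto the open region $D(S_0)^\circ$.

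The proof then rests on two local/global identities. First, a purely local computation at each vertex $v_k$ of the inner boundary walk shows that the geodesic curvature contributed near $v_k$ is $2\pi\,\tau_i(v_k)$: walking along $bQ$ positively, with $Q$ (hence the faces of $S_0$) on the left, the faces $g_1^{(k)},\dots,g_{t_k}^{(k)}$ lying to the left of $[v_{k-1},v_k,v_{k+1}]$ are precisely the faces of $S_0$ met near $v_k$, and summing their angle contributions reproduces \eqref{innerLT}; hence $\tau(bQ)=2\pi\,\tau_i(b_iS)$. This is the exact analogue of the identity $\tau(bP)=2\pi\,\tau_o(bS)$ used in Theorem~\ref{CGBT1}, and it must be verified component by component along the decomposition $bS_0=\bigcup_j\Gamma_j=\bigcup_{j,k}\Gamma_j^k$. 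Second, since $\mathcal{S}_G$ is locally Euclidean away from the vertices of $G$, the integral curvature $\omega(Q^\circ)$ is concentrated at the vertices enclosed by $Q$; I would check that these are exactly the interior vertices $S_-$, using that $v\in D(S)^\circ$ iff every face incident to $v$ lies in $F(S)$ iff $v\notin V(bS)$, and conclude $\omega(Q^\circ)=2\pi\,\kappa(S_-)$ from \eqref{curvertex}.

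Finally I would apply the Gauss--Bonnet theorem for polyhedral surfaces \eqref{GBT-S} to $Q$, namely $\omega(Q^\circ)+\tau(bQ)=2\pi\,\chi(Q)$, and combine it with the two identities above. The topological bookkeeping is closed by observing that the dangling vertices and edges removed in forming $S_0$ lie on $bS$ and hence do not affect the interior, so $D(S_0)^\circ=D(S)^\circ$ and $\chi(Q)=\chi(D(S_0)^\circ)=\chi(D(S)^\circ)=\chi(S_-)$; dividing by $2\pi$ yields \eqref{GBF-2}. The main obstacle, as in Type~I, is the careful geodesic-curvature computation for $\tau(bQ)$: one must track orientations so that the \emph{left} faces are the ones counted, confirm that sharpening corners leaves the total turn unchanged, and make sure the multi-component, multi-boundary structure $\{\Gamma_j^k\}$ (including nested holes) is summed correctly. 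The enclosed-vertex identification and the Euler-characteristic matching are then routine.
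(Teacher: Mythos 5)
Your proposal is correct and follows essentially the same route as the paper's own proof: pass to the face region $D(S_0)$, shrink it slightly (the paper writes $D(S)_\epsilon = D(S)\setminus (bS)_\epsilon$, which coincides with your inward-pushed region since the discarded dangling vertices and edges lie on $bS$), sharpen corners to get a polygonal region with $\tau(bP_{-})=2\pi\,\tau_i(b_iS)$, and apply the polyhedral Gauss--Bonnet formula \eqref{GBT-S}. The paper states these steps more tersely, but your local verification of the turning angles, the identification of the enclosed vertices with $S_{-}$, and the Euler-characteristic matching are exactly the ``simple computation'' it invokes.
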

\begin{proof}
If $b S$ is  just a simple cycle $[v_0, \ldots, v_n]$, then equation \eqref{GBF-2} can be deduced from GBF-1 and the fact $\tau_{i} (v_k) - \tau_{o} (v_k) = \kappa (v_k)$.
In general we can obtain \eqref{GBF-2} by shrinking $D(S)$ slightly. Namely, for sufficiently small $\epsilon>0$, let $(bS)_\epsilon$ be the $\epsilon$-neighborhood of $b S \subset \mathcal{S}_G$
and let $D(S)_\epsilon = D(S) \setminus (b S)_\epsilon$. Then by sharpening corners of $D(S)_ \epsilon$ we obtain a polygonal region $P_{-}$ such that
$\tau (b P_{-}) = 2 \pi \cdot \tau_i (b_i S)$. Now the Gauss-Bonnet formula \eqref{GBT-S} for polyhedral surfaces implies \eqref{GBF-2} by a simple computation as in Theorem~\ref{CGBT1}.
\end{proof} 

The formula \eqref{GBF-2} will be called the second combinatorial Gauss-Bonnet formula involving boundary turns, or GBF-2 for abbreviation.  

\section{Proof of Theorem~\ref{T1}}\label{S:T1}

In this section we will prove Theorem~\ref{T1}. Let $G=(V, E, F)$ be a disk triangulation graph, and $k_n$ and $a_n$ be the sequences defined in Theorem~\ref{T1}
for a fixed vertex $v_0 \in V$ and combinatorial balls $B_n = B_n (v_0)$. Also let $S_n = S_n (v_0)$ be the combinatorial spheres.

Now we fix $n \geq 1$, and present a series of observations below.
 
\begin{observation}\label{O1}
Every vertex $v$ in $b B_n$ is incident to an edge going outside. That is, if $v$ is a vertex such that a sequence of the form $[\cdots, u, v, w, \cdots]  \subset b B_n$
corresponds to the boundary of a component of $\mathbb{C} \setminus D(B_n)$, 
then there exists at least one edge in $\partial B_n \subset E \setminus E(B_n)$  incident to $v$ and lying on the right of $[u, v, w]$, 
or equivalently, there are at least two faces in $F \setminus F(B_n)$ that are incident to $v$ and lie  on the right of $[u, v, w]$. 
\end{observation}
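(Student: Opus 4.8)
The plan is to read off the conclusion directly from the two structural features of a disk triangulation graph: every face is a triangle, and $B_n$ is treated as an \emph{induced} subgraph, so that for $v \in V(B_n)$ and an edge $[v,x] \in E$ one has $[v,x] \in E(B_n)$ if and only if $x \in V(B_n)$. First I would fix the orientation convention: walking along $bB_n$ in the positive direction keeps the region $D(B_n)$ of \eqref{D(S)} on the left, so every face incident to $v$ and lying on the right of $[u,v,w]$ sits in a component of $\mathbb{C}\setminus D(B_n)$ and hence cannot belong to $F(B_n)$. Since the boundary edges $[u,v]$ and $[v,w]$ lie in $E(B_n)$, I would record at the outset that $u,v,w \in V(B_n)$.

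Next I would establish the ``two faces'' assertion by ruling out $s_k = 1$. If $u \ne w$ and there were exactly one face $f$ on the right, then $f$ would be the triangle with vertices $v,w,u$, whose three edges are $[u,v],[v,w] \in E(B_n)$ together with $[u,w]$; but $u,w \in V(B_n)$ forces $[u,w] \in E(B_n)$ by the induced-subgraph property, so all edges of $f$ lie in $E(B_n)$ and therefore $f \in F(B_n)$, contradicting that $f$ lies outside $D(B_n)$. (In the degenerate spike case $u=w$ all $\deg v \ge 3$ faces at $v$ count as lying on the right, so $s_k \ge 2$ is automatic.) This yields at least two faces on the right, each of which belongs to $F \setminus F(B_n)$.

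From $s_k \ge 2$ I would then extract the outgoing edge. Let $[v,x_1]$ be the first edge on the right strictly after $[v,w]$, and let $f_1$ be the triangular face between $[v,w]$ and $[v,x_1]$, so $f_1$ has vertices $v,w,x_1$. As $f_1$ lies on the right it is not in $F(B_n)$, so some edge of $f_1$ is missing from $E(B_n)$; since $[v,w] \in E(B_n)$, either $[v,x_1]$ or $[w,x_1]$ is absent, and in either case the induced-subgraph property together with $v,w \in V(B_n)$ forces $x_1 \notin V(B_n)$. Hence $[v,x_1] \in \partial B_n$ is the desired edge going outside. The reverse direction of the stated equivalence is immediate: an interior right-edge at $v$ is flanked by two distinct faces incident to $v$, both outside $D(B_n)$ and hence both in $F\setminus F(B_n)$.

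I expect the only real subtlety to be bookkeeping rather than depth: one must pin down the orientation so that ``right'' genuinely means ``outside $D(B_n)$'', treat the spike case $u=w$ separately, and invoke the induced-subgraph convention at precisely the right moments. The triangulation hypothesis is what makes the argument go through, since it guarantees that the offending face is a triangle whose third edge can be forced into $E(B_n)$.
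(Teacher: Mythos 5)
Your proof is correct and takes essentially the same route as the paper: assume only one face lies on the right of $[u,v,w]$, conclude it must be the triangle on $u$, $v$, $w$, and use the induced-subgraph property to force that face into $F(B_n)$, contradicting that it sits in a component of $\mathbb{C}\setminus D(B_n)$. The additional bookkeeping you supply (the spike case $u=w$ and the explicit passage from two right-hand faces to an edge of $\partial B_n$) merely spells out what the paper leaves implicit.
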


If Observation~\ref{O1} is not true,  there should be only one face, say $f$, lying on the right of $[u, v, w]$. Then  $f$ must be a triangle with vertices $u, v$, and $w$,
because $G$ is a disk triangulation graph. Thus $f \in F(B_n)$ by the definition of the face set $F(B_n)$, because we assumed that $B_n$ was an \emph{induced} subgraph. This implies
that the sequence $[\ldots, u, v, w, \ldots]$ cannot correspond to  a component of $\mathbb{C} \setminus D(B_n)$, 
since  in this case we should have had $[\ldots, u, w, \ldots]$ instead.  This contradiction confirms Observation~\ref{O1}. Also note that
such $u$ and  $w$ do not have to be unique for a fixed $v \in V(b B_n)$, but Observation~\ref{O1} holds for every sequence of the form $[\ldots, u, v, w, \ldots]$ shown in $b B_n$
that corresponds to the boundary of a component of $\mathbb{C} \setminus D(B_n)$.

\begin{observation}\label{O2}
Every vertex $v$ in $b B_n$ belongs to $S_n$; i.e., the combinatorial distance between $v_0$ and $v$ must be $n$.
\end{observation}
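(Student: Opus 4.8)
The plan is to argue by contradiction, showing that a vertex of $bB_n$ lying at distance strictly less than $n$ from $v_0$ would be forced to be an interior vertex of $D(B_n)$ and hence could not appear on the boundary walk. Since $v \in V(bB_n) \subset V(B_n)$, the definition of the combinatorial ball immediately gives $d(v_0, v) \le n$, so it suffices to rule out the possibility $d(v_0, v) \le n-1$.

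So suppose $d(v_0, v) \le n-1$. First I would note that every neighbor $w$ of $v$ then satisfies $d(v_0, w) \le d(v_0, v) + 1 \le n$, so every neighbor of $v$ lies in $V(B_n)$. The crucial step is to combine this with two structural facts: that $G$ is a disk triangulation graph, and that $B_n$ is taken as an \emph{induced} subgraph. Indeed, every face $f$ incident to $v$ is a triangle, so its three vertices are $v$ together with two neighbors $a, b$ of $v$; all three of $v, a, b$ then lie in $V(B_n)$, and since $B_n$ is induced, the three edges $[v,a]$, $[v,b]$, $[a,b]$ all belong to $E(B_n)$. By the definition of $F(B_n)$ (a face lies in $F(B_n)$ precisely when all its incident edges lie in $E(B_n)$), this forces $f \in F(B_n)$.

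Consequently every face incident to $v$ belongs to $F(B_n)$. Because $G$ is a tessellation, the faces incident to $v$ fill a full neighborhood of $v$ in $\mathbb{C}$, so this neighborhood is contained in $D(B_n)$; hence $v$ lies in the topological interior of $D(B_n)$ and cannot lie on the boundary of any component of $\mathbb{C} \setminus D(B_n)$, contradicting $v \in V(bB_n)$. Therefore $d(v_0, v) \ge n$, and combined with $d(v_0, v) \le n$ this yields $d(v_0, v) = n$, i.e. $v \in S_n$.

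The argument is essentially routine once these two ingredients are in place; the step most easily overlooked is the use of the induced-subgraph convention, which is exactly what guarantees that the edge $[a,b]$ joining two neighbors of $v$ lies in $E(B_n)$ and thereby places the triangular face $f$ into $F(B_n)$. Without this convention a triangle could be ``missing'' from $F(B_n)$ even though all its vertices lie in $V(B_n)$, and the interior-vertex conclusion would break down.
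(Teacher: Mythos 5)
Your proof is correct, but it takes a different (and self-contained) route from the paper's. The paper disposes of this observation in one line as a corollary of Observation~\ref{O1}: since every $v\in V(bB_n)$ is incident to an edge of $\partial B_n$, it has a neighbor $w$ with $d(v_0,w)\geq n+1$, whence $d(v_0,v)\geq n$, and the reverse inequality is automatic. You instead prove the contrapositive directly: if $d(v_0,v)\leq n-1$ then all neighbors of $v$ lie in $B_n$, so (triangulation plus the induced-subgraph convention) every face incident to $v$ lies in $F(B_n)$, making $v$ an interior point of $D(B_n)$ and hence absent from the boundary walk. The two arguments rest on exactly the same two structural ingredients --- that $G$ is a disk triangulation graph and that $B_n$ is induced --- which the paper packages once into Observation~\ref{O1} and you deploy in localized form; your version is independent of~\ref{O1} but essentially re-derives its content around the single vertex $v$, while the paper's version reuses~\ref{O1} and is correspondingly shorter. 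Your closing remark correctly identifies the induced-subgraph convention as the step that cannot be omitted; the one point you could make slightly more explicit is that the faces incident to $v$ cover a full punctured neighborhood of $v$ because the embedding is locally finite, which is what licenses the jump from ``every incident face is in $F(B_n)$'' to ``$v$ is interior to $D(B_n)$.''
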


Observation~\ref{O2} is trivial since every $v \in V(b B_n)$ is incident to an edge in $\partial B_n$ by Observation~\ref{O1}, hence $v$ has a neighbor in $V \setminus B_n$.

\begin{observation}\label{O5}
For any set $K \subset V(b B_n)$, vertices in $B_n \setminus K$ can be connected  to each others by paths lying in $B_n \setminus K$.
\end{observation}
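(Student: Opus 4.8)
The plan is to prove a slightly cleaner statement that implies the observation directly: \emph{every} vertex of $B_n \setminus K$ can be joined to the center $v_0$ by a path lying entirely in $B_n \setminus K$. Since any two vertices connected to a common vertex are connected to each other (by concatenating the two paths), this at once yields that $B_n \setminus K$ is connected, which is exactly the claim. The only tool needed is the monotonicity of combinatorial distance along a geodesic to $v_0$.

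First I would verify that the center itself survives the deletion, i.e.\ $v_0 \in B_n \setminus K$. By Observation~\ref{O2} we have $V(b B_n) \subseteq S_n$, so $K \subseteq V(b B_n) \subseteq S_n$; and since we have fixed $n \geq 1$, the center $v_0 \in S_0$ does not lie in $S_n$, whence $v_0 \notin K$. This is the one place where the hypothesis $n \geq 1$ and Observation~\ref{O2} are used.

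Next, fix an arbitrary $u \in B_n \setminus K$ and set $m = d(v_0, u) \leq n$. I would choose a geodesic $u = x_0, x_1, \ldots, x_m = v_0$, so that $d(v_0, x_i) = m - i$ for each $i$. For $1 \leq i \leq m$ this gives $d(v_0, x_i) = m - i \leq n-1$, so $x_i \in B_{n-1}$ and in particular $x_i \notin S_n \supseteq K$; moreover $x_0 = u \notin K$ by the choice of $u$. Every $x_i$ satisfies $d(v_0, x_i) \leq n$, so all of them lie in $V(B_n)$, and because $B_n$ is treated as an \emph{induced} subgraph, each consecutive edge $[x_i, x_{i+1}]$ automatically belongs to $E(B_n)$. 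Thus the geodesic is a path in $B_n \setminus K$ joining $u$ to $v_0$, as desired.

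I do not anticipate any genuine obstacle here: the entire argument rests on the elementary fact that the interior vertices of a geodesic to $v_0$ strictly decrease their distance to $v_0$ and hence fall into $B_{n-1}$, away from the boundary layer $S_n$ in which $K$ is contained. The only subtlety worth flagging is the location of $K$ inside $S_n$, which is precisely the content of Observation~\ref{O2}; once that is in hand, joining two arbitrary vertices of $B_n \setminus K$ through $v_0$ finishes the proof.
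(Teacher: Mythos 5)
Your proof is correct and follows essentially the same route as the paper: both arguments use Observation~\ref{O2} to place $K$ inside $S_n$, then join an arbitrary vertex of $B_n \setminus K$ to $v_0$ by a geodesic whose interior vertices lie in $B_{n-1}$ and hence avoid $K$. The only difference is that you spell out the distance bookkeeping and the check that $v_0 \notin K$, which the paper leaves implicit.
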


Observation~\ref{O5} is also trivial, because $K$ is a subset of  $S_n$ by Observation~\ref{O2} and every vertex in $B_n$ can be connected to $v_0$ by 
a shortest path lying in $B_n$, which cannot pass through vertices in $S_n$ except the ends.

\medskip

We next want to remove some edges and faces from $B_n$ in order to avoid  pathological situations. But because the face set of a subgraph $S \subset G$ is determined
by its vertex and edge sets as we saw in Section~\ref{prelim}, we will only discuss how to remove edges from $B_n$.
Let $D_0$ be the component of $D(B_n)^\circ$ containing $v_0$, where $(\cdot)^\circ$ denotes the topological interior of the given set. Define $A_n$ as the subgraph of $B_n$
consisting of vertices and edges in $\overline{D}_0$, the topological closure of $D_0$. Then it is not difficult to see that 
$D(A_n) = \overline{D}_0$, which we call the \emph{main body} of $D(B_n)$.

Now suppose $\Gamma$ is a simple cycle in $b B_n$, which might correspond to more than one component of $\mathbb{C} \setminus D(B_n)$, such that $D(B_n) \setminus \Gamma$
is disconnected. Then except the component of $D(B_n) \setminus \Gamma$  containing $v_0$,  the other
components of $D(B_n) \setminus \Gamma$ cannot contain vertices since otherwise $B_n \setminus V(\Gamma)$ would have more than one component containing vertices,
contradicting Observation~\ref{O5}. 
It is also clear from the definition of $A_n$ that each edge in $E(A_n)$ either lies on $\Gamma$ or belongs to the component of  $D(B_n) \setminus \Gamma$  containing $v_0$.
That is, except the component including the main body,  every component of $D(B_n) \setminus \Gamma$ is composed of edges and faces in $D(B_n) \setminus D(A_n)$
with vertices deleted (and perhaps some edges deleted, depending on $\Gamma$). Conversely, edges in $E(B_n) \setminus E(A_n)$ can definitely be separated 
by simple cycles in $b A_n \subset b B_n$ from the main body. Therefore  $A_n$ is the subgraph of  $B_n$ which can be obtained from $B_n$ by 
deleting all the edges separated from the main body by simple cycles in $b B_n$. See Figure~\ref{F:betterball}.
\begin{figure}[t]
\begin{center}
\input{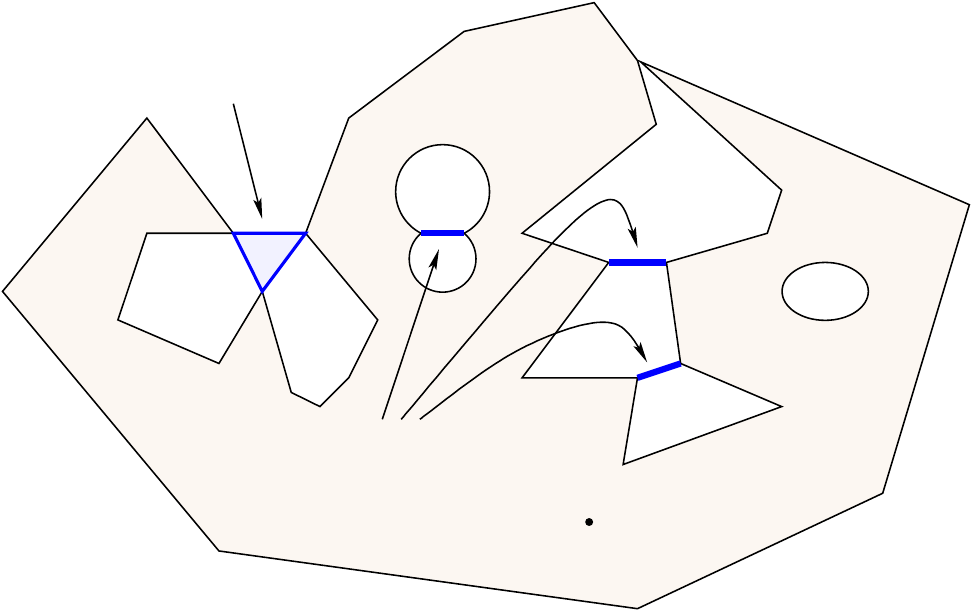_t}
\caption{Removing edges and faces lying in the outside of the main body.}\label{F:betterball}
\end{center}
\end{figure}

It is not difficult to see that $V(b A_n) = V(b B_n)$, because we did not remove vertices from $B_n$ and every vertex in $b B_n$ has a neighbor in $V \setminus V(B_n)$
by Observation~\ref{O1}.  Therefore Observations~\ref{O1} and \ref{O2} remain true with $A_n$ in  place of $B_n$.
Moreover, because $V(A_n) \subset S_n$ and  edges in $E(B_n) \setminus E(A_n)$ must have both ends on $S_n$, 
the arguments in Observation~\ref{O5} work for $A_n$ as well; i.e., Observation~\ref{O5} also holds with $A_n$ in  place of $B_n$.

Suppose that $\mathbb{C} \setminus D(A_n)$ has $m$ components. Then as discussed at the end of Section~\ref{prelim} it is possible to write
$b A_n = \Gamma_1 \cup \cdots \cup \Gamma_m$, where each $\Gamma_j$ is a cycle corresponding to a component of $\mathbb{C} \setminus D(A_n)$. 

\begin{observation}\label{O3}
Every $\Gamma_j$ is a \emph{simple} cycle. That is, if $\Gamma_j = [v_1, \ldots, v_{l} = v_1]$ for some $j$, then $v_a \ne v_b$ for $1 \le a < b \le l-1$.
\end{observation}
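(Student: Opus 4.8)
The plan is to argue by contradiction and to translate non-simplicity of a boundary cycle into a statement about the cyclic arrangement of faces around a single vertex. Suppose some $\Gamma_j$ visits a vertex $v$ twice, i.e.\ $\Gamma_j = [\ldots, v, \ldots, v, \ldots]$. Since walking along $b A_n$ is the same as walking along the topological boundary of $D(A_n) = \overline{D}_0$, each passage through $v$ sweeps out exactly one \emph{maximal angular gap} at $v$, that is, one maximal arc of faces incident to $v$ that do not lie in $F(A_n)$. A double visit by the single complementary component $U_j$ (the component of $\mathbb{C} \setminus D(A_n)$ whose boundary walk is $\Gamma_j$) therefore forces at least two such gaps $\mathcal{B}_1, \mathcal{B}_2$ at $v$ to belong to $U_j$, and in the cyclic order around $v$ these gaps are separated by at least two nonempty arcs $\mathcal{A}_1, \mathcal{A}_2$ of faces lying in $F(A_n)$. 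The first step is to record this dictionary between occurrences of $v$ in $b A_n$ and angular gaps, and to note that $v \notin D_0$, since $v \in V(b A_n) \subset S_n$ by Observation~\ref{O2} and hence $v$ is a boundary, not an interior, point of $\overline{D}_0$.

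Next I would exploit that $D_0$ is a \emph{single} connected open set with $D(A_n) = \overline{D}_0$. The key topological fact is that, being a component of the open set $D(B_n)^\circ$, $D_0$ is relatively closed in it, so $\overline{D}_0 \cap D(B_n)^\circ = D_0$; consequently every face $f \in F(A_n)$, whose interior lies in both $\overline{D}_0$ and $D(B_n)^\circ$, actually satisfies $f^\circ \subset D_0$. Choosing faces $f_1 \in \mathcal{A}_1$ and $f_2 \in \mathcal{A}_2$ incident to $v$, I would pick interior points $a \in f_1^\circ$ and $b \in f_2^\circ$ and join them by a path $\sigma \subset D_0$ avoiding $v$ (possible because $D_0$ is open and connected and $v \notin D_0$). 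Closing $\sigma$ through $v$ by the straight segments $[a, v] \subset f_1$ and $[b, v] \subset f_2$ produces a Jordan curve $\ell \subset \overline{D}_0 = D(A_n)$ whose two prongs at $v$ point into the sectors $\mathcal{A}_1$ and $\mathcal{A}_2$.

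The contradiction then comes from the Jordan curve theorem. Near $v$ the two prongs of $\ell$ split a small disk around $v$ into two angular regions, and because the cyclic order is $\mathcal{A}_1, \mathcal{B}_1, \mathcal{A}_2, \mathcal{B}_2$, the gaps $\mathcal{B}_1, \mathcal{B}_2 \subset U_j$ lie on opposite sides of $\ell$ locally. Since $\ell \subset D(A_n)$ and $U_j \subset \mathbb{C} \setminus D(A_n)$ are disjoint, the connected set $U_j$ must lie entirely inside or entirely outside $\ell$, which contradicts its meeting both sides. This rules out a double visit and shows every $\Gamma_j$ is simple. (One checks that $A_n$ inherits Observations~\ref{O1} and~\ref{O5}, so the sectors $\mathcal{A}_i$ really consist of genuine $A_n$-faces.)

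I expect the main obstacle to be the bookkeeping in the first paragraph: making precise the correspondence between repeated occurrences of $v$ in $b A_n = \bigcup_j \Gamma_j$ and the angular gaps at $v$, and carefully distinguishing a repetition \emph{within} a single $\Gamma_j$ (the harmful case, forcing two gaps in the same component $U_j$) from the harmless situation in which $v$ is merely \emph{shared} by two distinct cycles $\Gamma_j$ and $\Gamma_{j'}$. Once this dictionary is pinned down, the relative-closedness identity $\overline{D}_0 \cap D(B_n)^\circ = D_0$ and the Jordan-curve separation argument are routine.
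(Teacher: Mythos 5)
Your argument is correct, but it runs the topological separation ``from the other side'' compared with the paper. The paper's proof is a two-line reduction: a vertex $v$ repeated in a single $\Gamma_j$ is a pinch point of $D(A_n)$, so $A_n \setminus \{v\}$ is disconnected, and this contradicts Observation~\ref{O5} (applied to $A_n$ with $K = \{v\}$), whose own proof rests on shortest paths to $v_0$ avoiding $S_n$. You instead keep the graph side intact and disconnect the complement: the Jordan curve $\ell \subset D(A_n) = \overline{D}_0$ through $v$, built from two $A_n$-faces at $v$ interleaved with the two gaps of $U_j$ and a path in $D_0$, forces the connected set $U_j$ to lie on one side of $\ell$ while it must approach $v$ from within both gaps. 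The two proofs hinge on the same local picture at $v$ (the interleaving $\mathcal{A}_1, \mathcal{B}_1, \mathcal{A}_2, \mathcal{B}_2$), but yours trades the appeal to Observation~\ref{O5} for the identity $\overline{D}_0 \cap D(B_n)^\circ = D_0$ and the connectedness of $D_0$ and of $U_j$ --- ingredients immediate from the definition of $A_n$ --- which makes your version self-contained at the cost of a longer explicit Jordan-curve construction. You also correctly isolate the point that matters: both gaps must lie in the \emph{same} complementary component, so a vertex merely shared by two distinct cycles $\Gamma_j$, $\Gamma_{j'}$ is harmless (that situation is exactly what Observation~\ref{O4} treats next). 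The only loose ends are routine: the segments $[a,v]$ and $[b,v]$ should be arcs inside $f_1$ and $f_2$ rather than literal straight segments, since faces need not be convex in the embedding, and $\sigma$ must be trimmed so that $\ell$ is simple; neither affects the argument, and both are at the same level of informality as the paper's own appeal to a figure.
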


If not, there exists $v \in b A_n$ that repeats in the cycle  $[v_1, v_2, \ldots, v_{l} = v_1]$. Then $A_n \setminus \{v\}$ is disconnected,
because we chose each $\Gamma_j$ as the boundary of a component of $\mathbb{C} \setminus D(A_n)$ (see (a) and (b) of Figure~\ref{F:cutvertex}).
This contradicts Observation~\ref{O5}, proving Observation~\ref{O3}.

 \begin{figure}[t]
\centerline{
\hfill \subfigure[]{\scalebox{1}{\input{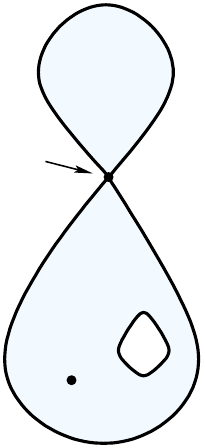_t}}} \hfill
\subfigure[]{\scalebox{1}{\input{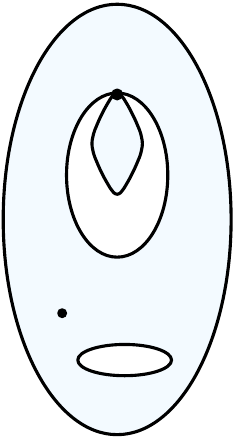_t}}} \hfill
\subfigure[]{\scalebox{1}{\input{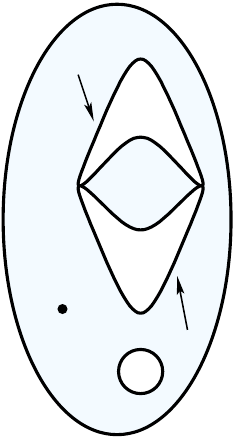_t}}} \hfill
}
\caption{Cases that $A_n \setminus K$ is disconnected for some $K \subset V(b A_n)$.}\label{F:cutvertex}
\end{figure}

\begin{observation}\label{O4}
For $i  \ne j$, $\Gamma_i \cap \Gamma_j$ is at most a vertex.
\end{observation}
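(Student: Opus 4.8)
The plan is to rule out the only two ways $\Gamma_i$ and $\Gamma_j$ could meet in more than one vertex, namely sharing an edge and sharing two distinct vertices. Throughout I would use that each $\Gamma_\ell$ is a \emph{simple} cycle (Observation~\ref{O3}) bounding a single complement component $U_\ell$ of $\mathbb{C}\setminus D(A_n)=\mathbb{C}\setminus\overline{D}_0$, and that $A_n$ was defined as the subgraph carried by $\overline{D}_0$; in particular every edge of $A_n$ is contained in $\overline{D}_0$.

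First I would dispose of shared edges. If an edge $e$ lay on both $\Gamma_i$ and $\Gamma_j$, then, reading off the two boundary cycles, the open region on one side of $e$ would belong to $U_i$ and that on the other side to $U_j$; since $U_i\neq U_j$, both sides of $e$ lie in $\mathbb{C}\setminus\overline{D}_0$. Then a small neighborhood of the relative interior $e^\circ$ meets only $U_i$, $U_j$ and $e$ itself, never $D_0$, so $e^\circ\not\subset\overline{D}_0$, contradicting $e\in E(A_n)$. Thus $\Gamma_i$ and $\Gamma_j$ share no edge; equivalently $A_n$ has no dangling edge, a fact I will reuse below.

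The heart of the argument is to show $\Gamma_i$ and $\Gamma_j$ cannot share two distinct vertices $v,w$. Assuming they do, I would form a Jordan curve $J$ by joining $v$ to $w$ through a crosscut $\gamma_i$ of the Jordan domain $U_i$ and joining $w$ back to $v$ through a crosscut $\gamma_j$ of $U_j$; since $U_i\cap U_j=\emptyset$ these arcs meet only at $v,w$, so $J$ is simple. Because $J\setminus\{v,w\}\subset U_i\cup U_j$ lies in $\mathbb{C}\setminus\overline{D}_0$, it contains no vertex or edge of $A_n$, whence $J\cap A_n=\{v,w\}$ and any path of $A_n$ joining the two sides of $J$ must pass through $v$ or $w$. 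It then remains to exhibit a vertex of $A_n$ strictly inside $J$ and another strictly outside. For this I note that $\gamma_i$ splits $U_i$ into two pieces lying on opposite sides of $J$, so that the two arcs into which $v,w$ divide $\Gamma_i$ lie on opposite sides of $J$; choosing on each arc an edge together with the triangular face of $A_n$ it bounds (such a face exists precisely because $A_n$ has no dangling edge), the third vertex of that triangle is a vertex of $A_n$ lying strictly on the corresponding side of $J$. This yields $x_1$ inside and $x_2$ outside $J$, both distinct from $v$ and $w$.

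Finally, since every $A_n$-path from $x_1$ to $x_2$ must cross $J$ and hence meet $\{v,w\}$, the graph $A_n\setminus\{v,w\}$ cannot connect $x_1$ to $x_2$; as $\{v,w\}\subset V(bA_n)$, this contradicts Observation~\ref{O5} (in the form valid for $A_n$), completing the argument. I expect the delicate step to be the topological bookkeeping that the two boundary arcs of $\Gamma_i$ genuinely land on opposite sides of $J$, so that the face trick really produces vertices on both sides; by contrast, checking that $J$ is an honest Jordan curve and that $J\cap A_n=\{v,w\}$ should be routine once the crosscuts are taken inside the complement components.
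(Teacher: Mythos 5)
Your proposal is correct and follows essentially the same route as the paper: first ruling out a shared edge because both sides of such an edge would lie outside $D(A_n)$ (the paper phrases this as the edge having faces of $F\setminus F(A_n)$ on both sides, so it would have been deleted in forming $A_n$), and then showing that two shared vertices would disconnect $A_n\setminus\{v,w\}$, contradicting Observation~\ref{O5}. The only difference is that you spell out the Jordan-curve/crosscut bookkeeping for the separation step, which the paper delegates to Figure~\ref{F:cutvertex}(c).
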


If $\Gamma_i \cap \Gamma_j$  contains an edge, then this edge should have been removed from $B_n$ while obtaining $A_n$, 
because on this edge we see faces in $F \setminus F(A_n)$ on both sides. This  contradiction shows that every component of $\Gamma_i \cap \Gamma_j$  is at most a vertex.
Now if $\Gamma_i \cap \Gamma_j$ has more than one component, then   $A_n \setminus \Gamma_i \cap \Gamma_j$ has more than one component and
we have another contradiction. See Figure~\ref{F:cutvertex}(c).  We conclude that  
if $\Gamma_i \cap \Gamma_j$ is nonempty, then it must be a vertex.

\medskip

By extending  the previous argument, we can  prove the following.

\begin{observation}\label{O6}
If $\Gamma_{i_1} \cup \Gamma_{i_2} \cup \cdots \cup \Gamma_{i_k}$ is connected and $j \notin \{ i_1, i_2, \ldots, i_k \}$,
then $(\Gamma_{i_1} \cup \Gamma_{i_2} \cup \cdots \cup \Gamma_{i_k}) \cap \Gamma_j$ is at most a vertex.
\end{observation}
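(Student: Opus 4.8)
The plan is to mimic and extend the argument already used for Observation~\ref{O4}, reducing everything to the connectedness of the main body. First I would dispose of edges: any edge of $\Lambda := \Gamma_{i_1} \cup \cdots \cup \Gamma_{i_k}$ belongs to one of its constituent cycles $\Gamma_{i_\ell}$, so an edge lying in $\Lambda \cap \Gamma_j$ would lie in $\Gamma_{i_\ell} \cap \Gamma_j$ for some $\ell$, contradicting Observation~\ref{O4}. Hence $\Lambda \cap \Gamma_j$ contains no edge and is a finite set of vertices, and it remains only to rule out two distinct vertices. So suppose $v, w \in \Lambda \cap \Gamma_j$ with $v \neq w$.

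Next I would build a theta-configuration. Since $\Lambda$ is connected, I can join $v$ to $w$ by a simple arc inside $\Lambda$; walking along it and recording its finitely many intersections with $\Gamma_j$, I may pass to a subarc and thereby assume there is a simple arc $\beta \subseteq \Lambda$ from $v$ to $w$ whose relative interior is disjoint from $\Gamma_j$. Because $v$ and $w$ split the simple cycle $\Gamma_j$ (Observation~\ref{O3}) into two arcs $\alpha_1$ and $\alpha_2$, the set $\theta := \Gamma_j \cup \beta$ is a theta-graph whose complement in the sphere has exactly three components: the complementary region $U_j$ of $\mathbb{C}\setminus D(A_n)$ bounded by $\Gamma_j = \alpha_1 \cup \alpha_2$, together with the two regions $R_1$ (bounded by $\alpha_1 \cup \beta$) and $R_2$ (bounded by $\alpha_2 \cup \beta$). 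Here I use that $\beta$ lies on the $D_0$-side of $\Gamma_j$, so that $U_j$ is bounded by $\Gamma_j$ alone.

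Then I would derive the contradiction from connectedness of the main body. Both $\Gamma_j$ and $\beta$ lie on $b A_n = \partial D(A_n)$, so the connected open set $D_0 = D(A_n)^\circ$ is disjoint from $\theta$ and hence contained in $U_j \cup R_1 \cup R_2$; moreover $D_0$ misses $U_j$. On the other hand, at an interior point of the arc $\alpha_1$ the face of $A_n$ lying on the side of $\Gamma_j$ opposite $U_j$ has interior points in $D_0 \cap R_1$, and likewise $\alpha_2$ furnishes points of $D_0 \cap R_2$; thus the connected set $D_0$ meets the two distinct components $R_1$ and $R_2$ of $\mathbb{C}\setminus\theta$, which is impossible. (Equivalently, this shows that deleting $\Lambda \cap \Gamma_j$ pinches $A_n$ into more than one piece, contradicting Observation~\ref{O5}.) Therefore $\Lambda \cap \Gamma_j$ has at most one vertex, and combined with the absence of edges this proves the observation.

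The main obstacle is the topological bookkeeping in the last two steps: selecting $\beta$ so that it meets $\Gamma_j$ only at its endpoints, correctly identifying the three faces of $\theta$, and verifying that $D_0$ genuinely reaches into \emph{both} $R_1$ and $R_2$ rather than hugging a single arc. This is exactly the point that Observation~\ref{O4} handled for a single cycle $\Gamma_i$; the only new feature is that the single bounding arc is replaced by the arc $\beta$ threading through the connected union $\Lambda$, so the same separation mechanism applies.
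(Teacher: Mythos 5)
Your proof is correct and follows essentially the same route as the paper: edges in the intersection are excluded exactly as in Observation~\ref{O4}, and two intersection vertices are excluded by exhibiting a simple closed curve in $b A_n$ (one of the two cycles $\alpha_1 \cup \beta$, $\alpha_2 \cup \beta$ of your theta-graph, which is precisely the paper's simple cycle $\Gamma \subset (\Gamma_{i_1} \cup \cdots \cup \Gamma_{i_k}) \cup \Gamma_j^1$) that separates the main body, contradicting the connectedness of $D_0$. Your explicit verification that $D_0$ reaches into \emph{both} $R_1$ and $R_2$ --- via the $F(A_n)$-faces attached to interior edges of $\alpha_1$ and $\alpha_2$ on the side opposite $U_j$ --- is a useful sharpening of the paper's ``it is not difficult to see.'' One caution: your parenthetical claim that this ``equivalently'' contradicts Observation~\ref{O5} is exactly what the paper warns against --- the region pinched off by $\alpha_i \cup \beta$ may be a single face of $A_n$ containing no vertices in its interior (cf.\ the removed face in Figure~\ref{F:betterball}), in which case deleting the vertices of $\Lambda \cap \Gamma_j$ does \emph{not} disconnect the vertex set of $A_n$; the genuine contradiction is with the construction of $A_n$ (i.e.\ with $D_0$ being a single connected component), which is what your main argument correctly uses.
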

 
By the same argument as the above we see that every component of $(\Gamma_{i_1} \cup  \cdots \cup \Gamma_{i_k}) \cap \Gamma_j$ is at most a vertex.
Now if $(\Gamma_{i_1}  \cup \cdots \cup \Gamma_{i_k}) \cap \Gamma_j$ has more than one component, then definitely the set
$\Gamma_j \setminus (\Gamma_{i_1}  \cup \cdots \cup \Gamma_{i_k})$ also has more than one component.
Let $\Gamma_j^1$ be one of such component. Then it is not difficult to see that $(\Gamma_{i_1}  \cup \cdots \cup \Gamma_{i_k})\cup \Gamma_j^1$ separates 
$D(A_n)$ into more than one component, contradicting the way we obtained $A_n$; i.e., in this case there is a simple cycle 
$\Gamma \subset (\Gamma_{i_1}  \cup \cdots \cup \Gamma_{i_k})\cup \Gamma_j^1$ that separates 
$D(A_n)$ into more than one component. This contradiction proves Observation~\ref{O6}. (Note that if $\Gamma_j^1$ is an edge, 
then $(\Gamma_{i_1}  \cup \cdots \cup \Gamma_{i_k})\cup \Gamma_j^1$ could separate a face (with  vertices removed) from the main body.
Look at the removed face in Figure~\ref{F:betterball}.  
Thus the argument above yields a result contradictory to the way we obtained $A_n$, but it does not contradict  Observation~\ref{O5}.  
This is partially why we consider $A_n$ instead of $B_n$.)

\medskip

Define $| b A_n| = \sum_{j=1}^m \mbox{length}(\Gamma_j)$, and we next prove the following lemma.

\begin{lemma}\label{L:S_n}
For each $n \geq 1$, we have the inequality
\begin{equation}\label{E:S_n} 
|S_n| \geq |b A_n| - (m-1),
\end{equation}
where $|S_n|$ denotes the number of vertices in $S_n$, and $m$ is the number of components of $\mathbb{C} \setminus D(A_n)$.
\end{lemma}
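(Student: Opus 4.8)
The plan is to reduce the inequality to a vertex-counting statement about the simple cycles $\Gamma_1, \ldots, \Gamma_m$ composing $bA_n$, and then to control how these cycles overlap using Observations~\ref{O3}, \ref{O4}, and \ref{O6}.

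First I would note that every vertex occurring in $bA_n$ lies in $S_n$, since Observation~\ref{O2} remains valid with $A_n$ in place of $B_n$; hence $|S_n| \geq |V(bA_n)|$ and it suffices to bound $|V(bA_n)|$ from below. Because each $\Gamma_j$ is a \emph{simple} cycle by Observation~\ref{O3}, its length equals its number of distinct vertices, so writing $\ell_j = \mathrm{length}(\Gamma_j)$ we have $|V(\Gamma_j)| = \ell_j$ and $\sum_j \ell_j = |bA_n|$. Counting the vertices of $bA_n$ with multiplicity, and letting $c(p)$ be the number of cycles $\Gamma_j$ containing a given vertex $p$, I obtain the identity
\[
|bA_n| = \sum_{j=1}^m |V(\Gamma_j)| = |V(bA_n)| + \sum_{p \in V(bA_n)} \bigl( c(p)-1 \bigr).
\]
Thus the lemma reduces to the combinatorial claim that the total overlap $\sum_p (c(p)-1)$ is at most $m-1$.

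To establish this claim I would analyze the subgraph $\bigcup_{j} \Gamma_j$ one connected component at a time. Suppose one such component consists of $r$ of the cycles. Since it is connected, I can order its cycles as $\Gamma_{j_1}, \ldots, \Gamma_{j_r}$ (following a spanning tree of their intersection pattern) so that each partial union $U_t = \Gamma_{j_1} \cup \cdots \cup \Gamma_{j_t}$ is connected and $\Gamma_{j_t}$ meets $U_{t-1}$ for every $t \geq 2$. Observation~\ref{O6} then forces $\Gamma_{j_t} \cap U_{t-1}$ to be exactly a single vertex, so $|V(U_t)| = |V(U_{t-1})| + \ell_{j_t} - 1$, and inductively $|V(U_r)| = \sum_t \ell_{j_t} - (r-1)$. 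Comparing this with the multiplicity count shows that the overlap contributed by this component is exactly $r-1$. Summing over all $c$ connected components of $bA_n$ gives $\sum_p (c(p)-1) = m - c$, whence $|V(bA_n)| = |bA_n| - (m-c)$.

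Finally, since $bA_n$ has at least one connected component we have $c \geq 1$, so $|V(bA_n)| \geq |bA_n| - (m-1)$, and combining this with $|S_n| \geq |V(bA_n)|$ yields the desired inequality \eqref{E:S_n}. I expect the main obstacle to be the careful justification that the incremental ordering exists and that Observation~\ref{O6} genuinely applies at each stage—that is, that the partial unions stay connected and that each newly attached cycle meets the current union in precisely one vertex; once this ordering is in place, the vertex-counting bookkeeping is routine.
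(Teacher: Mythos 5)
Your proof is correct and follows essentially the same route as the paper: both reduce the lemma to counting vertices of the union of the simple boundary cycles $\Gamma_j$ and both invoke Observations~\ref{O2}, \ref{O3}, and \ref{O6} to force each newly attached cycle to meet the current connected union in exactly one vertex. Your per-component bookkeeping via the multiplicity identity is just a slightly tidier packaging of the paper's sequential renumbering argument (and yields the sharper identity $|V(bA_n)| = |bA_n| - (m-c)$), but the underlying idea is the same.
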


\begin{proof}
Because $V(b A_n) \subset S_n$ by Observation~\ref{O2} and $|V(\Gamma_j)| = \mbox{length}(\Gamma_j)$ for every $j$ by Observation~\ref{O3}, it suffices to show that 
\[
|V(b A_n)| = | V( \Gamma_1 \cup \Gamma_2 \cup \cdots \cup \Gamma_m)| \geq  \sum_{j=1}^m |V(\Gamma_j)| - (m-1),
\]
or
\begin{equation}\label{seqpf}
 | V( \Gamma_1 \cup \Gamma_2 \cup \cdots \cup \Gamma_j)| \geq |  V( \Gamma_1 \cup \Gamma_2 \cup \cdots \cup \Gamma_{j-1})|+|V(\Gamma_j)|-1
 \end{equation}
 for every $j=2,3, \ldots, m$. In fact, \eqref{seqpf} does not have to be true in general, but it will be true by renumbering $\Gamma_j$'s appropriately.
 
 If $\Gamma_i \cap \Gamma_j = \emptyset$ for every $i$ and $j$  with $i \ne j$, then \eqref{seqpf} is definitely true and the proof has been completed. If not, 
 then we may assume that  $\Gamma_1 \cap \Gamma_2 \ne \emptyset$ after renumbering $\Gamma_j$'s if necessary, and we see that $\Gamma_1 \cap \Gamma_2$
 is a vertex by Observation~\ref{O4}. Thus \eqref{seqpf} holds in this case. Now if there is a cycle, say $\Gamma_3$ after renumbering, such that 
 $(\Gamma_1 \cup \Gamma_2) \cap \Gamma_3 \ne \emptyset$, then we know that the intersection is just a vertex by Observation~\ref{O6}, hence \eqref{seqpf} still holds for $j=3$.
 
 We keep doing the above process until there exists $l_1$, $1 < l_1 \leq m$, such that \eqref{seqpf} holds for all $j=2, \ldots, l_1$, the set 
 $\Gamma_1 \cup \ldots \cup \Gamma_{l_1}$ is connected, and  $(\Gamma_1 \cup \cdots \cup \Gamma_{l_1})\cap \Gamma_k  = \emptyset$ for every $k > l_1$. 
 Then we choose $\Gamma_{l_1 +1}$ as it is indexed, and we see that 
 \[
 | V( \Gamma_1 \cup \Gamma_2 \cup \cdots \cup \Gamma_{l_1} \cup \Gamma_{l_1 +1})| = |  V( \Gamma_1 \cup \Gamma_2 \cup \cdots \cup \Gamma_{l_1})|+|V(\Gamma_{l_1 +1})|,
\]
so \eqref{seqpf} holds for $j=l_1 +1$. If there is $k > l_1 +1$ such that $(\Gamma_1 \cup \cdots \cup \Gamma_{l_1+1})\cap \Gamma_k  \ne \emptyset$, then definitely we have 
$(\Gamma_1 \cup \cdots \cup \Gamma_{l_1})\cap \Gamma_k  = \emptyset$  and $\Gamma_{l_1 +1 } \cap \Gamma_k \ne \emptyset$, hence Observation~\ref{O6} is still applicable and
\eqref{seqpf} holds for $j=l_1 +2$ after renumbering if necessary. We keep doing this process until there exists $l_2$, $l_1 < l_2 \leq m$,
 such that \eqref{seqpf} holds for all $j=2, \ldots, l_2$, the set 
 $\Gamma_{l_1+1} \cup \ldots \cup \Gamma_{l_2}$ is connected, and  $(\Gamma_1 \cup \cdots \cup \Gamma_{l_2})\cap \Gamma_k  = \emptyset$ for every $k > l_2$. 
 Then we choose $\Gamma_{l_2 +1}$ as it is indexed, and we can go to the next step.
 
 The above process can be extended to the $(m-1)$th step, hence \eqref{seqpf} holds for all $j=2,3, \ldots, m$ with an appropriate enumeration of $\Gamma_j$'s. This completes the proof.
\end{proof}

Suppose $v$ is a vertex that appears in $\Gamma_j$; i.e., we assume that $\Gamma_j$ is written as $\Gamma_j = [ \ldots, u, v, w, \ldots ]$ for some vertices $u, w \in V(b A_n) \subset S_n$.
Then as we saw in Observation~\ref{O1} there is at least one edge incident to $v$ and going outside, or on the right of $[u,v,w]$. Let $e_1, e_2, \ldots, e_k$, $k \geq 1$, be the edges
incident to $v$ and lying on the right of $[u,v,w]$, which we enumerate counterclockwise. Then we say that there are $k-2$ \emph{extra edges} at $v$ along $\Gamma_j$, and if $k \geq 3$ the
edges $e_2, \ldots, e_{k-1}$ will be called extra edges. Thus if $k=1$,  there is `$-1$' extra edge at $v$ (along $\Gamma_j$) in our definition.
On the other hand, clearly every vertex in $S_{n+1}$ is connected  to $b B_n$ by an edge. We can say more: a vertex in $S_{n+1}$  
must be connected to  $b A_n$  because we did not remove vertices while obtaining $A_n$. 
Thus we can classify vertices in $S_{n+1}$ as follows: a vertex in $S_{n+1}$ is called  type~I if it is incident to a face with one edge on $b A_n$, and it is called  type~II
if it is connected to $b A_n$ by an extra edge. There could be some vertices in $S_{n+1}$ which are of both types~I and II simultaneously   (Figure~\ref{F:S_n}).
\begin{figure}[t]
\begin{center}
\input{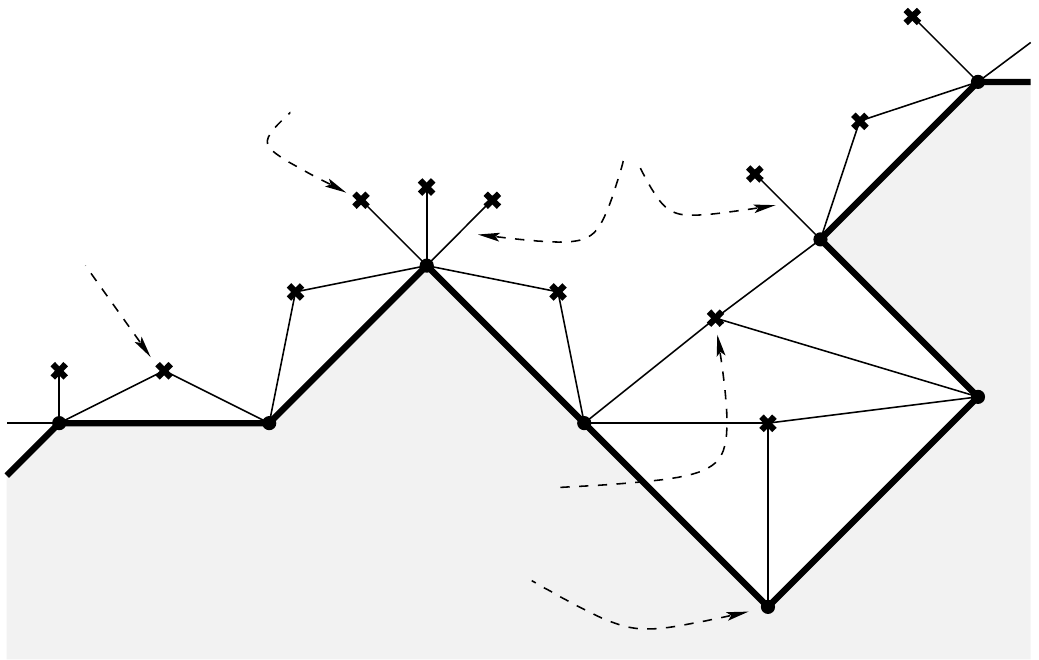_t}
\caption{Vertices in $S_n$ and $S_{n+1}$,  and edges between them.}\label{F:S_n}
\end{center}
\end{figure}

Each  type I vertex corresponds to at least one edge in $b A_n$, and if there is a vertex on some $\Gamma_j$ with $-1$ extra edge then
the number of  type I vertices would be reduced, say from $|b A_n|$, by 1. Furthermore every type II vertex  corresponds to at least one extra edge, so we conclude that
\begin{equation}\label{Sn+1}
|S_{n+1}| \leq   |b A_n|  + (\mbox{total number of extra  edges}),
\end{equation}
where the `total number of extra edges' means the sum of the number of extra edges at every vertex along every cycle $\Gamma_j$.
On the other hand, if there are $k$ extra edges at $v$ along $\Gamma_j$, then because $G$ is a disk triangulation graph and the number of faces incident to $v$ and lying on
the right of $\Gamma_j$ is $k+3$, 
 the outer left turn $\tau_o(v)$  occurred at $v$ (along $\Gamma_j$), which is defined in \eqref{outerLT},  is
\[
\tau_o(v) = \left( \frac{1}{2} - \frac{1}{3} \right) (k+3) - \frac{1}{2} = \frac{k}{6}.
\]
Therefore we have
\[
\tau_o (b A_n) = \frac{1}{6} \cdot  (\mbox{total number of extra  edges}).
\]

For each $v \in V$, the combinatorial curvature at $v$ becomes 
\[
\kappa(v) = 1 - \frac{\deg v}{2} + \sum_{f \sim v} \frac{1}{\deg f} = 1 - \frac{\deg v}{6}
\]
because $G$ is a disk triangulation graph. Thus 
\[
\kappa (A_n) = \kappa(B_n) = \sum_{v \in B_n} \kappa(v) = \frac{1}{6} \cdot \sum_{v \in B_n} (6 - \deg v),
\]
hence  GBF-1 \eqref{GBF-1} implies that
\begin{gather*}
\frac{1}{6}  \cdot \sum_{v \in B_n} (6 - \deg v) + \frac{1}{6} \cdot  (\mbox{total number of extra  edges})\\
 = \kappa (A_n) + \tau_o (b A_n) = \chi (A_n) = 2 - m,
\end{gather*}
or
\begin{equation}\label{degreex}
\sum_{v \in B_n} (\deg v - 6) = 6m-12 + (\mbox{total number of extra  edges}).
\end{equation}
Note that \eqref{degreex} is true even for $n=0$, since $B_0 = \{ v_0 \}$ is simply connected and every edge incident to $v_0$ is an extra edge.
Now by \eqref{E:S_n},  \eqref{Sn+1}, and \eqref{degreex} we get
\begin{align*}
|S_{n+1}| - |S_n|  & \leq  \bigl( |b A_n|  + (\mbox{total number of extra  edges})\bigr)  - \bigl( |b A_n| - (m-1) \bigr) \\
                             & = (\mbox{total number of extra  edges}) +m -1 \\
                             & = \sum_{v \in B_n} (\deg v - 6)  -5m +11 \leq   \sum_{v \in B_n} (\deg v - 6) +6 
\end{align*}
because $m \geq 1$. Therefore, because $k_n = \sum_{v \in B_n} (\deg v - 6)$  and $a_n = \sum_{j=0}^{n-1} (k_j +6)$, we have
\begin{equation}\label{spheresize}
\begin{aligned}
|S_{n+1}| & = |S_1|+ \sum_{j=1}^n (|S_{j+1}| - |S_j|) \\
& \leq \deg v_0 + \sum_{j=1}^n \left( \sum_{v \in B_j} (\deg v - 6) +6 \right) \\
& = \{ (\deg v_0 -6) +6 \} + \sum_{j=1}^n (k_j+6)  \\
& =  (k_0 +6) + \sum_{j=1}^n (k_j + 6) =  \sum_{j=0}^n (k_j + 6) =  a_{n+1}
\end{aligned}
\end{equation}
and we see that  $\sum_{n=1}^\infty 1/|S_n| = \infty$ if \eqref{Reci} holds; i.e., if we have
\begin{equation}\label{reci3}
 \sum_{n=1}^\infty \frac{1}{a_n} = \infty.
 \end{equation} 
Therefore by Rodin and Sullivan's theorem(Theorem~\ref{T:RoSull}) we conclude that $G$ is CP parabolic if \eqref{reci3} holds.

To prove the recurrence part of Theorem~\ref{T1}, let $E_{n} = \partial B_n$, the set of edges connecting $B_n$ and $S_{n+1}$.
Then because $|E_n| \leq 2 |b A_n| + (\mbox{total number of extra  edges})$, we have from \eqref{E:S_n},  \eqref{degreex}, and \eqref{spheresize} that
\begin{equation}\label{edgesize}
\begin{aligned}
|E_n| & \leq 2 |b A_n| +  (\mbox{total number of extra  edges}) \\
          &  \leq 2 (|S_n| + m-1) + \left( \sum_{v \in B_n} (\deg v - 6) - 6m+12 \right) \\
          & = 2 \cdot |S_n| +k_n  -4m + 10 \leq 2  \cdot\sum_{j=0}^{n-1} (k_j +6)  + k_n +6  \\
          & =  \sum_{j=0}^{n-1} (k_j +6) + \sum_{j=0}^{n} (k_j +6) = a_n + a_{n+1}.
\end{aligned}  
\end{equation}           
Therefore if \eqref{Reci2} holds, that is, if
\[
\sum_{n=1}^\infty \frac{1}{a_n + a_{n+1} }= \infty,
\]
then  we  have  $\sum_{n=1}^\infty 1/|E_n| = \infty$ and recurrence of $G$ follows from  Nash-Williams' theorem(Theorem~\ref{T:Nash}).
This completes the proof of Theorem~\ref{T1}, and a remark follows.

\begin{remark}\label{rm1}
By \eqref{spheresize} we have $|S_n| \leq a_n$  for all $n=1,2, \ldots$. 
(In fact, we proved it only for $n \geq 2$ since we have assumed $n \geq 1$ in the proof of \eqref{spheresize}, but one can easily check that $|S_1| = \deg v_0 =a_1$). 
Then because $G$ being a disk triangulation graph forces $|S_n|$ to be  greater than or equal to $3$ for every $n \geq 1$ (because $G$ has to be \emph{simple} and \emph{infinite}), 
we must have $a_n \geq 3$, $n \in \mathbb{N}$, as in  \eqref{ICP} of Theorem~\ref{T:S}. 
Though very far from being hyperbolic, the disk triangulation graph satisfying $|S_n| =3$ for $n \geq 1$ has the degree excess sequence $k_n$ such that $k_0 = -3$ and $k_n = -6$ for $n \geq 1$, 
hence the equality $a_n = \sum_{j=0}^{n-1} (k_j + 6)=3$ holds in this case (Figure~\ref{extreme3}). 
\end{remark}

 \begin{figure}[t]
\begin{center}
 \includegraphics{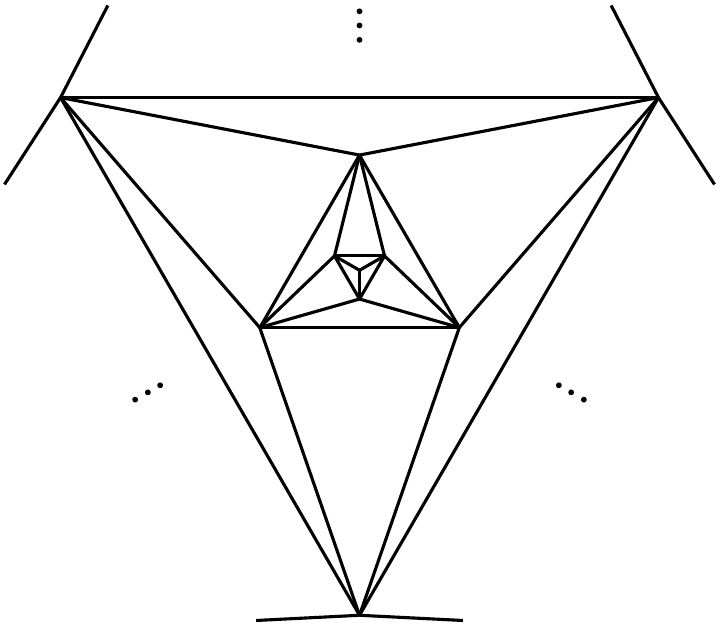}
 \caption{The disk triangulation graph with $|S_n|=3$ for $n \geq 1$.}\label{extreme3}
\end{center}
\end{figure}

\section{Proof of Theorem~\ref{T:Hyp}}\label{S:T2}
If the assumption of Theorem~\ref{T:Hyp} holds for a disk triangulation graph $G$, then there exist a positive number $c >0$ 
and  a nonempty connected  finite subgraph $S_0 \subset G$ such that
\begin{equation}\label{suff1}
\sum_{v \in S} (\deg v - 6) \geq c \cdot g(|S|)
\end{equation}
for every  connected  finite subgraph $S \supset S_0$. Here $g$ is a nondecreasing function such that $\sum_{n=1}^\infty 1/g(n)^2 < \infty$ as in the statement of Theorem~\ref{T:Hyp}.
Thus to prove Theorem~\ref{T:Hyp},  we  assume that a connected  finite subgraph $S$  such that $S_0 \subset S \subset G$ is given. 

Let $T$ be the induced subgraph of $G$ satisfying $V(T) = V(S) \cup d S$, and we add to $T$ all the vertices
and edges in the bounded components of $\mathbb{C} \setminus D(T)$ and obtain a simply connected graph $W$.
That is, we fill all the `holes' in $D(T)$ to obtain $W$.

We claim that for any  $K \subset V(bW)$, the induced graph with the vertex set $V(W) \setminus K$ is connected as in Observation~\ref{O5}. 
To see this, first note that $b W \subset b T$
corresponds to the unbounded component of $\mathbb{C} \setminus D(T)$. Moreover, every vertex in the boundary walk $b T$ 
belongs to $d S$, the vertex boundary of $S$, because if $w \in V(S)$ then all the neighbors of $w$ 
must be in $V(T)$, hence $w$ must be in the interior of $D(T)$ and $w \notin b T$. Now suppose that $v \in V(W) \setminus K$. 
Then it belongs to one of $V(S)$, $dS$, or a bounded component of $\mathbb{C} \setminus D(T)$. 
If $v$ is either in $V(S)$ or $dS$, then it can be connected by a path lying in  $W \setminus K$ to any other vertices in $V(S)$, because $S$ is connected and $S \cap K = \emptyset$.
(Note that since $K \subset V(bW) \subset V(bT) \subset d S$, we have $S \cap K = \emptyset.$) 
Suppose $v$ lies in a bounded component of $\mathbb{C} \setminus D(T)$, and let us write $b T$  as the union of cycles  
corresponding to the boundaries of components of  $\mathbb{C} \setminus D(T)$, as we did before. Note that one of such cycles must be $bW$, and let 
$\Gamma$ be the cycle corresponding to the component containing $v$. Then as in Observation~\ref{O3}, both $bW$ and $\Gamma$ are simple cycles,
since otherwise $\Gamma$ minus the repeated vertex (or $bW$ minus the repeated vertex) must contain a vertex, say $w$, which cannot be connected to
the \emph{connected} graph $S$ without passing through a vertex in $V(\Gamma) \setminus \{ w \}$ (respectively $V(bW) \setminus \{w\}$),
while every vertex in $V(bT) \subset dS$ can be connected to $S$ by an edge. 
Similarly as in Observation~\ref{O4}, $b W \cap \Gamma$ has at most one component since otherwise  $(b W \cup \Gamma) \setminus (b W \cap \Gamma)$ contains
a vertex which cannot be connected to $S$ by an edge. Moreover if $b W \cap \Gamma \ne \emptyset$, then it is either a vertex or an edge. In fact, because both
$bW$ and $\Gamma$ are simple cycles corresponding to distinct components of $\mathbb{C} \setminus D(T)$, 
the intersection must be a vertex or a simple path. But if it is a path of length $> 1$, then the \emph{interior(middle)} vertices 
cannot be connected to $S$ by an edge, so we have a contradiction. In summary, we conclude that $\Gamma$ is a simple cycle and $bW \cap \Gamma$ contains at most two vertices.

Note that  $T$ is an induced graph, hence as in Observation~\ref{O1} 
at every vertex in $V(\Gamma)$ there is an edge lying on the right of $\Gamma$, or going toward the component containing $v$.  
Furthermore, such edges cannot have the other ends on $V(\Gamma)$ because $T$ is induced; i.e., there is no edge \emph{crossing} $\Gamma$. 
Therefore the graph induced by the vertices belonging to the same component
of $\mathbb{C} \setminus \Gamma$ as $v$ must be connected, because for example if there are two components of $G \setminus \Gamma$ contained in the same
component of $\mathbb{C} \setminus \Gamma$,
then the two components must be separated by a face, hence there must be an edge \emph{crossing} $\Gamma$ because every face of $G$  is a triangle.  
We conclude that $v$ can be connected to each vertex in $V(\Gamma)$ by a path lying in $G \setminus \Gamma$ except the ends, hence  
$v$ can be connected by a path lying in  $W \setminus K \bigl( \supset W \setminus V(bW) \bigr)$ to any other vertices in $V(S)$ because $\Gamma$ contains at least three vertices while 
we proved that $b W \cap \Gamma$ contains at most two vertices. Thus the claim, the induced graph with the vertex set $V(W) \setminus K$ is connected, has been proved.

Let $Z$ be the induced subgraph such that $V(Z) = V(W) \setminus V(b W)$. Then $Z$ is connected by the claim above, and definitely we have $S \subset Z$ and $d Z \subset V(b W)$.
In fact, $d Z = V(b W)$ because if $v \in V(b W) \subset V(b T) \subset d S$, then $v$ must have a neighbor in $S \subset Z$, and by the definition of $Z$ we have $v \notin V(Z)$.

Note that $bW$ is a simple cycle as we observed above, hence the inner boundary walk $b_i W$ is the same as the (outer) boundary walk $b W$. 
Moreover because $V(b W) = dZ$, at every vertex  $v \in V(bW)$ there is at least one edge incident to $v$ and going inside, or at least two faces incident to $v$ and lying on the left of
$bW$. Therefore the formula \eqref{innerLT} yields $\tau_i (v) \leq 1/6$, or
\begin{equation}\label{tlt}
6 \cdot \tau_i (b_i W)  \leq |V(bW)|.
\end{equation}
We also have 
\[
W_{-} := V(W) \setminus V(bW)  = V(Z) \supset V(S)
\]
and 
\[
dZ  = V(b W) \subset V(b T) \subset d S.
\]
Thus from \eqref{tlt} we have 
\begin{equation}\label{tttx}
6 \cdot \tau_i (b_i W) \leq |d Z| \leq |d S|.
\end{equation}
Now because $Z$ is a connected finite subgraph such that $Z \supset S_0$, \eqref{suff1} implies that
\begin{equation}\label{suff2}
c \cdot g(|S|) \leq c \cdot g(|Z|) \leq \sum_{v \in Z} ( \deg v - 6) = -6 \cdot \kappa(Z) = -6 \cdot \kappa(W_{-}).
\end{equation}
Therefore by GBF-2 \eqref{GBF-2} together with \eqref{tttx} and \eqref{suff2}, we have
\[
c \cdot g(|S|)  \leq -6 \cdot \kappa(W_{-}) = 6 \cdot \tau_i ( b_i W) - 6 \cdot \chi (W_{-}) \leq |d Z| - 6  \leq |d S|,
\]
because $\chi (W_{-})=1$ by the construction of $W$. Now Theorem~\ref{T:Hyp} follows from Theorem~\ref{HS2}(a),
because $S$ is an arbitrary connected finite subgraph such that $S \supset S_0$.

\section{Criteria for CP hyperbolicity}\label{S:doubt}
 The following inequality given in Theorem~\ref{T:HSit}
\begin{equation}\label{LAV2}
\mbox{lav}(G) := \sup_{S_0} \inf_{S \supset S_0} \left( \frac{1}{|S|} \sum_{v \in S} \deg v \right) > 6
\end{equation}
seems to say that the `average' of vertex degrees is greater than $6$, but we bet it says something different because in this formula one has to consider \emph{all} the sufficiently large subgraphs 
$S\supset S_0$. For example,  if $G$ contains a connected infinite subgraph such that all of its vertices are of degree at most 6,
then neither  Theorem~\ref{T:HSit}(b)  nor  Corollary~\ref{C:useless} can be applied to this graph $G$ even for the case when  $G$ is dominated by vertices of degrees  $\geq 7$.
See Figure~\ref{badline}.

 \begin{figure}[t]
\begin{center}
 \includegraphics{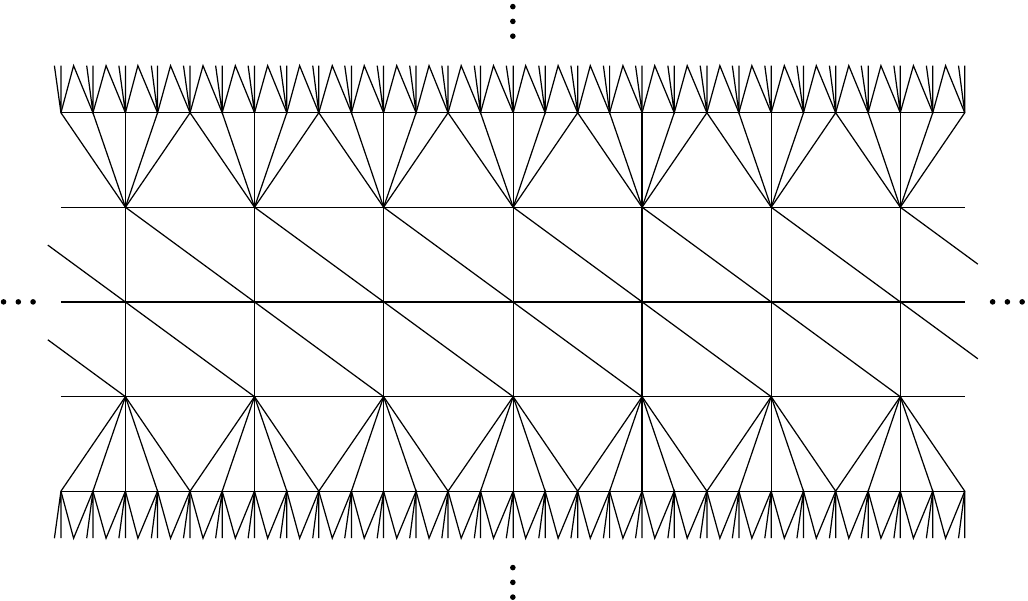}
 \caption{A disk triangulation graph including  infinitely many vertices of degree $6$ on the `middle' horizontal line. This graph is CP hyperbolic by Theorem~\ref{T:BE} or
 Theorem~\ref{T:partition}, but neither Corollary~\ref{C:useless} nor Theorem~\ref{T:HSit}(b) can be applied to this graph.}\label{badline}
\end{center}
\end{figure}

We believe that the real meaning of the inequality \eqref{LAV2} is that, in some sense, \emph{non-negatively curved clusters}, that is, the components of
 $$G \setminus \{ v \in V : \deg v \geq 7 \},$$
 should be placed  somewhere  far from a fixed spot (say $S_0$), and how far it would be must be determined by  the size of each cluster. 
 In this context what was improved in Theorem~\ref{T:Hyp} (or Corollary~\ref{C:useless}) from Theorem~\ref{T:HSit}(b)  is  
 that the graph in consideration could be still CP hyperbolic even 
when the non-negatively curved clusters are closer to a fixed spot than the inequality \eqref{LAV2} requires.

To understand this phenomenon more precisely,  let $H_n$ be the finite graph which is isomorphic to a ball of radius $n$ in the hexagonal regular triangulation (Figure~\ref{F:H_n}). 
We place them on a geodesic line, say on the real line, in order of size. Suppose  $l_n$ is the line segment between $H_n$ and $H_{n+1}$, and we triangulate the plane so that
the resulting graph $G$ contains all $H_n$'s and the geodesic real line, and $7 \leq \deg v \leq 8$ if $v \notin \bigcup_{n \in \mathbb{N}} H_n$. See Figure~\ref{F:H_n2}.

 \begin{figure}[t]
\begin{center}
 \includegraphics{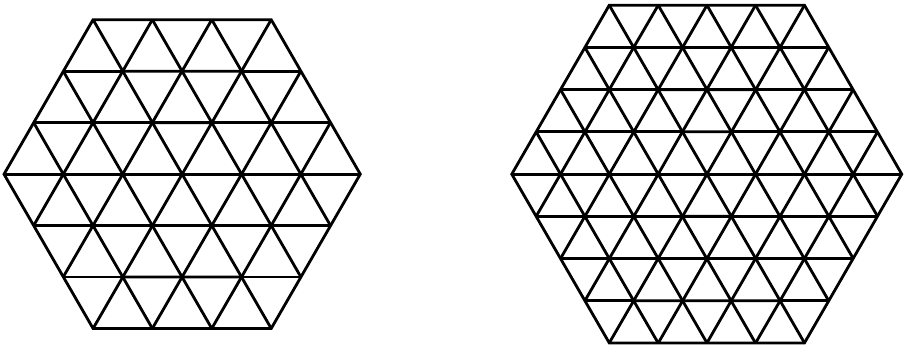}
 \caption{Graphs $H_3$ and $H_4$.}\label{F:H_n}
 \vspace{.7 cm}
 \input{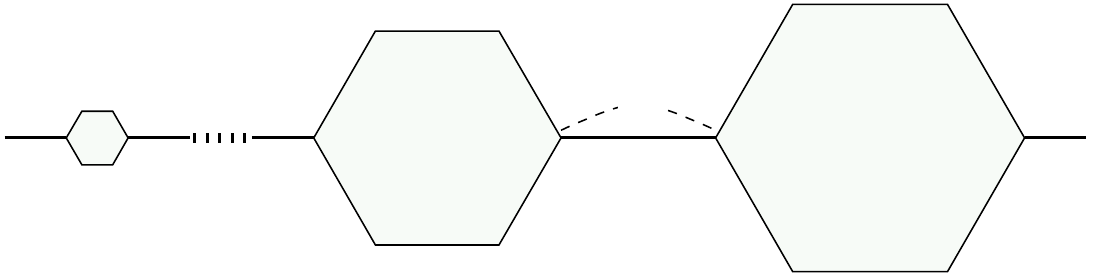_t}
\caption{Constructing a disk triangulation graph $G$ such that $\mbox{\rm lav}(G) >6$ or $\mbox{\rm lav}(G) =6$ depending on the sequence $|l_n|$.
The regions above and below the illustration are to be triangulated so that vertices on $l_n$ and  newly added vertices are of degree $7$ or $8$, and the real line stays geodesic.}\label{F:H_n2}
\end{center}
\end{figure}

Note that $|V(H_n) | = 3n ^2 +3n +1 \approx 3 n^2$. Thus if $|l_n| \geq c n^2$ for some $c >0$, one can check that 
\[
\mbox{lav}(G) \geq  6 + \frac{c}{3+c} >6,
\]
hence $G$ is CP hyperbolic by Theorem~\ref{T:HSit}(b). On the other hand, if $c_1 n^{3\alpha-1} \leq |l_n| \leq c_2 n^{\beta}$ 
for some constants $c_1, c_2 >0$, $\alpha \in (1/2, 1)$, and $\beta \in (1/2, 2)$, then one can check that $\mbox{lav}(G)=6$, hence Theorem~\ref{T:HSit}(b) is not applicable
in this case, but 
\[
\sup_{S_0} \inf_{S \supset S_0} \frac{1}{|S|^{\alpha}} \sum_{v \in S} (\deg v - 6) \geq \frac{c_1}{3 \alpha}  >0,
\]
so Corollary~\ref{C:useless} implies that $G$ is CP hyperbolic. (Thus it is enough to have $|l_n| \approx n$ for $G$ to be CP hyperbolic.)

As we saw in the example given by Figure~\ref{badline}, none of Theorems~\ref{T:HSit}(b) and  \ref{T:Hyp} can be applied to graphs with an infinite non-negatively curved cluster.
To compensate for this weakness we will give two more criteria for CP hyperbolicity, which will be derived from some previously known results. To explain them, however, 
we need to introduce a new concept, \emph{isoperimetric constants}.

Let $G$ be a tessellation in the plane, and $S$ a subgraph of $G$. Recall that $\partial S$, $d S$, and $b S$ denote the edge boundary, the vertex boundary, and the boundary walk,
respectively, of $S$, which were defined in Section~\ref{prelim}. We also define the \emph{inner vertex boundary} $d_0 S$ by the set of vertices in $V(S)$ that have neighbors
in $V \setminus V(S)$. Note that $d_0 S \subset V(S)$ while $d S \cap V(S) = \emptyset$, and every edge in $\partial S$ must connect $d_0 S$ to $d S$. Now we define the \emph{isoperimetric constants}
$\imath(G)$, $\imath^*(G)$,  $\jmath(G)$, and  $\jmath_0 (G)$  by
\begin{equation}\label{isoconst}
\begin{aligned}
& \imath (G) = \inf_S \frac{|\partial S|}{\mbox{\rm{Vol}}(S)}, \qquad  & \imath^{*} (G) = \inf_S \frac{|bS|}{|F(S)|}, \\
& \jmath (G) = \inf_S \frac{|dS|}{|V(S)|},  & \jmath_0 (G) = \inf_S \frac{|d_0S|}{|V(S)|}, 
\end{aligned}
\end{equation}
where $\mbox{\rm{Vol}}(S) = \sum_{v \in V(S)} \deg v$, and the infima are taken over all nonempty finite subgraphs $S \subset G$ (with $F(S) \ne \emptyset$ for $\imath^{*} (G)$).
Also note that $|b S|$ is not the number of vertices in $bS$ but the number of edges as defined before Lemma~\ref{L:S_n}.
The constants $\imath(G), \jmath(G)$(or $\jmath_0(G)$), and $\imath^{*}(G)$  characterize some properties of the edge set,
the vertex set, and the face set, respectively, of $G$, and are discrete analogues of Cheeger's constant \cite{Che70}. We also say that a strong isoperimetric inequality holds on $G$  
if one of the constants in \eqref{isoconst} is positive.

\begin{lemma}\label{twoveriso}
For any tessellation $G$ in the plane, we have
\begin{equation}\label{j}
\jmath_0 (G) =\frac{\jmath(G)}{1+ \jmath (G)}.
\end{equation}
In particular,  $\jmath(G)>0$ if and only if $\jmath_0(G)>0$.
\end{lemma}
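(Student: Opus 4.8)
The plan is to establish the identity by proving the two inequalities $\jmath_0(G) \le \jmath(G)/(1+\jmath(G))$ and $\jmath_0(G) \ge \jmath(G)/(1+\jmath(G))$ separately, exploiting that the function $\phi(t) = t/(1+t)$ is strictly increasing on $[0,\infty)$ and maps it bijectively onto $[0,1)$. First I would record the elementary but crucial remark that $dS$, $d_0 S$, and $|V(S)|$ depend only on the vertex set $V(S)$ together with the adjacency relation of $G$; hence in computing both $\jmath$ and $\jmath_0$ one may freely pass between a subgraph and the graph induced on its vertices, and in particular one may freely enlarge or shrink the underlying vertex set without affecting the quantities involved.

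For the upper bound, given a nonempty finite $S$ I would \emph{fatten} it: let $T$ be the graph induced on $V(S) \cup dS$. Since every vertex of $V(S)$ has, by the definition of $dS$, all of its outside neighbors already inside $dS$, no vertex of $V(S)$ can lie in $d_0 T$; thus $d_0 T \subseteq dS$, while $|V(T)| = |V(S)| + |dS|$. Dividing gives
\[
\frac{|d_0 T|}{|V(T)|} \le \frac{|dS|}{|V(S)| + |dS|} = \phi\!\left(\frac{|dS|}{|V(S)|}\right).
\]
Choosing $S$ with $|dS|/|V(S)|$ within $\epsilon$ of $\jmath(G)$ and letting $\epsilon \to 0$, monotonicity of $\phi$ yields $\jmath_0(G) \le \phi(\jmath(G))$.

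For the lower bound I would instead \emph{thin} $S$: let $R = V(S) \setminus d_0 S$ be the set of interior vertices. Each $v \in R$ has all of its neighbors in $V(S)$, so any vertex adjacent to $R$ from outside $R$ must lie in $d_0 S$; hence $dR \subseteq d_0 S$, while $|V(R)| = |V(S)| - |d_0 S|$. When $R \ne \emptyset$, applying the definition of $\jmath(G)$ to $R$ gives $\jmath(G)\,(|V(S)| - |d_0 S|) \le |dR| \le |d_0 S|$, which rearranges exactly to $|d_0 S|/|V(S)| \ge \phi(\jmath(G))$; when $R = \emptyset$ one has $d_0 S = V(S)$, so the ratio equals $1 \ge \phi(\jmath(G))$. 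Taking the infimum over $S$ gives $\jmath_0(G) \ge \phi(\jmath(G))$, and combining the two bounds proves \eqref{j}. The final ``in particular'' is then immediate, since $\phi(t) = 0$ precisely when $t = 0$.

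The steps I expect to require the most care are the two boundary inclusions $d_0 T \subseteq dS$ and $dR \subseteq d_0 S$: both hinge on tracking precisely where the neighbors of a given vertex are allowed to lie, and both rely on the preliminary observation that the boundary operators are functions of the vertex set alone. The only genuine case distinction is the possibly empty interior $R$ in the lower bound, which is handled trivially as above. Notably, no geometry of the tessellation (faces, planarity, or triangle structure) is actually needed, so I would not expect the combinatorial Gauss--Bonnet machinery to enter here.
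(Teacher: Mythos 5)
Your proof is correct and follows essentially the same route as the paper: the upper bound by fattening $S$ to the graph induced on $V(S)\cup dS$ and using $d_0T\subseteq dS$, and the lower bound by passing to $V(S)\setminus d_0S$ and using that its vertex boundary lies in $d_0S$. The only differences are cosmetic (you handle the empty-interior case explicitly where the paper instead separates out $\jmath(G)=0$, and you phrase the optimization via near-optimal $S$ rather than via an arbitrary $\alpha>\jmath(G)$), so no further comment is needed.
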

\begin{proof}
If $\alpha > \jmath(G)$, there exists $S \subset G$ such that $|dS|< \alpha |V(S)|$. Let $T$ be the induced subgraph such that $V(T) = V(S) \cup dS$. Then definitely
$d_0 T \subset d S$, hence
\[
|V(T)| = |V(S)| + |d S| \geq \frac{|d S|}{\alpha}+|dS|\geq \frac{1+\alpha}{\alpha} \cdot |d_0 T|. 
\]
Therefore we have $\jmath_0 (G) \leq \jmath(G)/(1+ \jmath(G))$ since $|d_0 T| /|V(T)| \geq \jmath_0 (G)$ and $\alpha$ is an arbitrary positive number such that 
$\alpha > \jmath(G)$.

Now if $\jmath(G) =0$, the above computation shows that $\jmath_0(G)=0$, hence \eqref{j} holds. 
Suppose $\jmath(G) >0$, and let $S$ be a nonempty finite subgraph of $G$.
Let $T$ be the induced subgraph of $G$ such that $V(T) = V(S) \setminus d_0 S$. Then we have $d T \subset d_0 S$, and
\[
|V(S)| = |V(T)|+ |d_0 S| \leq \frac{1}{\jmath(G)} |dT| + |d_0 S| \leq \frac{1 + \jmath(G)}{\jmath(G)} \cdot |d_0 S|.
\]
Thus we have $\jmath(G)/(1+ \jmath(G)) \leq \jmath_0(G)$ since $S \subset G$ is arbitrary. The lemma follows.
\end{proof}

We also have
\begin{theorem}\label{T:dual}
Let $G$ be a tessellation in the plane and  $G^*$ be the dual graph of $G$. Then the following hold.
\begin{enumerate}[(a)]
\item $\imath(G^*) >0$ if and only if $\imath^{*}(G)>0$;
\item $\jmath_0 (G)>0$ if and only if $\jmath_0 (G^*)>0$;
\item if $\jmath_0 (G)>0$, then $\imath(G)>0$ and $\imath^*(G)>0$.
\end{enumerate}
\end{theorem}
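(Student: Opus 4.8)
The plan is to reduce every one of the four isoperimetric constants to a comparison of a boundary count with a bulk count, and to run all three parts through two elementary planar facts: Euler's formula and the sparsity bound $e(S)\le 3|V(S)|-6$ for the number $e(S)$ of edges of $G$ with both endpoints in a finite vertex set $V(S)$ (valid since a tessellation is simple). The bridge between degree sums and edge counts is the identity $\mathrm{Vol}(S)=\sum_{v\in V(S)}\deg v=2e(S)+|\partial S|$, because each internal edge contributes $2$ and each boundary edge contributes $1$. I also use the dual dictionary $V(G^*)\leftrightarrow F(G)$, $E(G^*)\leftrightarrow E(G)$, $\deg_{G^*}(f)=\deg_G f$: for a subgraph $S$ equal to the union of the faces in $F(S)$, the dual subgraph $S^*$ induced on $F(S)$ satisfies $|\partial_{G^*}S^*|=|bS|$ and $|V(S^*)|=|F(S)|$. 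Throughout I pass freely between $\jmath$ and $\jmath_0$ via Lemma~\ref{twoveriso}.

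For part (c), assume $\jmath_0(G)>0$, hence $\jmath(G)>0$. The sparsity bound gives $\mathrm{Vol}(S)=2e(S)+|\partial S|<6|V(S)|+|\partial S|$ for every finite $S$. Since each inner-boundary vertex carries at least one boundary edge, $|d_0S|\le|\partial S|$, so $|V(S)|\le\jmath_0^{-1}|d_0S|\le\jmath_0^{-1}|\partial S|$ and therefore $\imath(G)\ge\jmath_0/(6+\jmath_0)>0$. For $\imath^*(G)$, I take $S$ to be a union of faces (adjoining any other edge only enlarges $bS$) and set $R=D(S)$; both $|bS|$ and $|F(S)|$ are additive over components, so I may assume $R$ connected. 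Euler's formula together with $e(R)\le 3|V(R)|-6$ then gives $|F(S)|=e(R)-|V(R)|+\chi(R)\le e(R)-|V(R)|+1<2|V(R)|$, whatever the number of holes. Applying $\jmath(G)>0$ to the set $W_{\mathrm{int}}$ of interior vertices of $R$ (whose vertex boundary lies in the boundary vertices of $R$, which in turn lie on $bS$, so $|W_{\mathrm{int}}|\le\jmath^{-1}|bS|$) bounds $|V(R)|\le(\jmath^{-1}+1)|bS|$, whence $\imath^*(G)\ge[2(\jmath^{-1}+1)]^{-1}>0$.

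For part (a) the two numerators coincide, $|\partial_{G^*}S^*|=|bS|$, so everything is a matter of denominators. Since $\mathrm{Vol}_{G^*}(S^*)=\sum_{f\in F(S)}\deg f\ge 3|F(S)|$, dividing gives $\imath(G^*)\le\tfrac13\imath^*(G)$, which yields $\imath(G^*)>0\Rightarrow\imath^*(G)>0$. For the converse I apply the same volume identity inside the planar graph $G^*$: $\mathrm{Vol}_{G^*}(S^*)=2e_{G^*}(S^*)+|\partial_{G^*}S^*|<6|V(S^*)|+|\partial S^*|$. As the correspondence $S\leftrightarrow S^*$ identifies $\imath^*(G)$ with the vertex-normalized edge-isoperimetric constant $\inf_{S^*}|\partial S^*|/|V(S^*)|$ of $G^*$, positivity of $\imath^*(G)$ gives $|V(S^*)|\le\imath^*(G)^{-1}|\partial S^*|$ and hence $\imath(G^*)\ge\imath^*(G)/(6+\imath^*(G))>0$. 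Thus planar sparsity is exactly what converts control of the vertex-normalized edge boundary into control of the volume-normalized one, and it is the only place where unbounded face degrees could have caused trouble.

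For part (b) I would use the bidual identity $(G^*)^*=G$ to reduce to the single implication $\jmath_0(G)>0\Rightarrow\jmath_0(G^*)>0$, and try to bound the number of boundary faces $|d_0S^*|$ below by a fixed multiple of $|F(S)|$, once more writing $R=D(S)$, applying $\jmath(G)>0$ to its interior vertices, and using Euler's formula to tie $|F(S)|$ to the interior edges and vertices of $R$. I expect this to be the main obstacle, and a genuinely delicate one. The edge-counting that powers (a) and (c) controls only the boundary-walk length $|bS|=|\partial_{G^*}S^*|$, and when face degrees are unbounded a single high-degree boundary face can account for a long stretch of $bS$; consequently $|d_0S^*|$ can be far smaller than $|\partial_{G^*}S^*|$, so the vertex boundary is \emph{not} controlled by the edge boundary and the reduction of (c) to the edge level is unavailable here. (This decoupling is precisely why (c) is stated as a one-way implication: in general $\imath>0$ does not force $\jmath_0>0$.) The real content of (b) is therefore a purely vertex-level argument that $\jmath_0(G)>0$ forbids a large face-region from being shielded from its complement by only a few faces; I would pursue it by analyzing how the boundary faces of $R$ partition the boundary vertices along $bS$ and invoking Euler's formula on the interior subcomplex, rather than through any estimate on $|\partial S|$, and I would watch carefully for whether an unbounded-degree configuration can make $|d_0S^*|/|F(S)|$ small while $\jmath_0(G)$ stays positive.
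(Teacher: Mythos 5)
Your arguments for parts (a) and (c) are correct and essentially complete, and they are in fact more self-contained than the paper, which does not prove Theorem~\ref{T:dual} at all but quotes \cite[Theorem~1]{Oh14} and merely verifies that tessellations satisfy the hypotheses (``normal'' graphs) of that reference. The identity $\mathrm{Vol}(S)=2e(S)+|\partial S|$, the sparsity bound $e(S)\le 3|V(S)|-6$ (valid for $G^*$ as well, since the tessellation axioms rule out loops and multiple edges in the dual), the observation $|d_0S|\le|\partial S|$, and the Euler-formula estimate $|F(S)|<2|V(R)|$ combined with $|V(R)|\le(\jmath(G)^{-1}+1)|bS|$ all check out. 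The one gloss worth repairing is in (a): the face-union $S$ built from a vertex set $A\subset V(G^*)=F(G)$ may satisfy $F(S)\supsetneq A$ (a face all of whose edges are shared with faces of $A$ gets absorbed into $F(S)$), so the clean identifications $|bS|=|\partial_{G^*}S^*|$ and $|F(S)|=|V(S^*)|$ should be replaced by the one-sided inequalities $|bS|\le|\partial_{G^*}A|$ and $|F(S)|\ge|A|$, which point in the direction your estimate needs.

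The genuine gap is part (b), as you yourself acknowledge: you give no proof of $\jmath_0(G)>0\Rightarrow\jmath_0(G^*)>0$, only a plan and an (accurate) diagnosis of why the edge-level bookkeeping powering (a) and (c) cannot deliver it --- with unbounded face degrees a single boundary face can absorb an arbitrarily long stretch of $bS$, so the dual vertex boundary $|d_0S^*|$ is not comparable to the dual edge boundary $|\partial_{G^*}S^*|=|bS|$, and the symmetric problem occurs at high-degree vertices of $G$. This is not a routine omission: (b) is precisely the statement the paper needs (via Lemma~\ref{boundedfaces}) to run the proofs of Theorems~\ref{T:BE} and~\ref{T:partition}, and it is exactly the part for which the citation of \cite[Theorem~1]{Oh14} is doing real work; a purely vertex-level argument transferring an isoperimetric inequality across duality without any degree bounds requires a further idea beyond Euler's formula and planar sparsity. (Your proposed reduction of (b) to one implication via $(G^*)^*=G$ also silently assumes that whatever hypotheses the missing argument uses are inherited by $G^*$, which would itself need checking.) As written, the proposal establishes (a) and (c) but leaves (b), and hence the theorem, unproved.
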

\begin{proof}
This theorem was proved in \cite[Theorem~1]{Oh14}. See also  \cite{OS16}, where the above statements were extended to planar graphs with more than one end.
In fact, planar graphs studied in \cite{Oh14}  are those whose dual graphs are simple (i.e., dual graphs do not have self loops), so every edge in the given graph
has to be incident to two distinct faces.
Furthermore, in \cite{Oh14} a simple infinite planar graph was called \emph{proper} if every face is homeomorphic to a closed disk (i.e., the facial walk of  each face is a simple cycle of length at least three), 
and a proper infinite planar graph was called \emph{normal} if every two faces are either disjoint or intersect in one vertex or one edge. 
Therefore tessellations in this paper must be normal graphs in \cite{Oh14}, which are definitely proper by definition. Thus  \cite[Theorem~1]{Oh14} can
be interpreted as above.
\end{proof}

An easy consequence of Theorem~\ref{T:dual} is the following.

\begin{lemma}\label{boundedfaces}
Suppose $G$ is a tessellation in the plane such that $\deg f \leq K$ for all $f \in F(G)$, where $K$ is a fixed positive constant. If $\imath^{*} (G) >0$, then $\jmath_0 (G)>0$.
\end{lemma}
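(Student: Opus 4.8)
The plan is to transport everything to the dual graph $G^*$ and invoke Theorem~\ref{T:dual}. First I would observe that the hypothesis $\deg f \leq K$ for every $f \in F(G)$ translates into a uniform bound on the vertex degrees of $G^*$: each vertex of $G^*$ corresponds to a face $f$ of $G$, and since $G$ is a tessellation (so every edge borders two distinct faces, and two faces meet in at most one edge) the degree of that vertex equals $\deg f \leq K$. Next, by Theorem~\ref{T:dual}(a) the assumption $\imath^{*}(G) > 0$ is equivalent to $\imath(G^*) > 0$, and by Theorem~\ref{T:dual}(b) the desired conclusion $\jmath_0(G) > 0$ is equivalent to $\jmath_0(G^*) > 0$. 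Thus the lemma reduces to the purely graph-theoretic implication: if $H$ is a graph with $\deg v \leq K$ for all $v$ and $\imath(H) > 0$, then $\jmath_0(H) > 0$.

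To prove this implication I would argue directly from the definitions in \eqref{isoconst}. Fix a finite subgraph $S \subset H$. On one hand, every edge of $\partial S$ has its endpoint inside $V(S)$ lying in the inner vertex boundary $d_0 S$, and each such vertex is incident to at most $K$ edges, so $|\partial S| \leq \sum_{v \in d_0 S} \deg v \leq K\,|d_0 S|$. On the other hand, writing $c_0 = \imath(H) > 0$, the definition of $\imath$ gives $|\partial S| \geq c_0\,\mathrm{Vol}(S) \geq c_0\,|V(S)|$, where the last inequality uses that every vertex has degree at least one. Combining the two estimates yields $K\,|d_0 S| \geq c_0\,|V(S)|$, hence $|d_0 S|/|V(S)| \geq c_0/K$. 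Since $S$ was arbitrary, $\jmath_0(H) \geq \imath(H)/K > 0$.

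Applying this with $H = G^*$ (whose vertex degrees are bounded by $K$) gives $\jmath_0(G^*) > 0$, and Theorem~\ref{T:dual}(b) then returns $\jmath_0(G) > 0$, completing the proof. I expect no deep obstacle here, since once the duality dictionary is in place the argument is essentially bookkeeping. The one point requiring a little care, and the place where the hypotheses are genuinely used, is the translation of the face-degree bound into a vertex-degree bound on $G^*$: this relies on the tessellation axioms to guarantee that $G^*$ is simple and that $\deg_{G^*} f^* = \deg_G f$ exactly, so that the bounded-degree reduction is legitimate.
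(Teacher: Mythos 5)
Your proposal is correct and follows essentially the same route as the paper: pass to the dual via Theorem~\ref{T:dual}(a), chain the inequalities $|V(S)| \leq \mathrm{Vol}(S) \leq c\,|\partial S| \leq cK\,|d_0 S|$ using the degree bound on $V(G^*)=F(G)$, and return via Theorem~\ref{T:dual}(b). The only cosmetic difference is that you package the middle step as a standalone bounded-degree implication for an abstract graph $H$, which the paper carries out directly on $G^*$.
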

\begin{proof}
Suppose $\imath^{*} (G) >0$. Then $\imath(G^*)>0$ by Theorem~\ref{T:dual}(a), hence there exists $c >0$ such that $\sum_{ v \in V(S)} \deg v = \mbox{\rm Vol} (S)  \leq c |\partial S|$ for every $S \subset G^*$.
Then since $\deg v \leq K$ for every $v \in V(G^*) = F(G)$, we have
\[
|V(S)| \leq \sum_{v \in V(S)} \deg v \leq c |\partial S| \leq cK |d_0 S|,
\]
showing that $\jmath_0 (G^*) \geq 1/cK >0$. Now the lemma follows from Theorem~\ref{T:dual}(b).
\end{proof}

Now we are ready to provide some more criteria for hyperbolicity.

\begin{theorem}\label{T:BE}
Suppose $G$ is a disk triangulation graph such that $\deg v \geq 6$ for every $v \in V$. If there exists $K\in \mathbb{N}$ 
such that  every  combinatorial ball $B_K (v)$ of radius $K$ contains a vertex of degree at least $7$, then $G$ is CP hyperbolic and transient.
\end{theorem}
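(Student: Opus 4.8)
The plan is to show that $G$ satisfies a \emph{strong isoperimetric inequality}, and then to read off both conclusions from the machinery assembled above. Transience will follow once $\imath(G)>0$: by Theorem~\ref{T:dual}(c) this gives a positive edge-isoperimetric constant, and it is standard that such a constant makes the weighted random walk nonamenable (spectral radius $<1$, regardless of degree bounds), hence transient. CP hyperbolicity will follow once $\jmath(G)>0$, since then $|dS|\ge \jmath(G)\,|V(S)|$ for every finite $S$, so taking $g(n)=\jmath(G)\,n$ (which satisfies $\sum 1/g(n)^2<\infty$) and invoking Theorem~\ref{HS2}(a) shows $G$ cannot be CP parabolic, whence it is CP hyperbolic. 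By Lemma~\ref{twoveriso} and Theorem~\ref{T:dual} it suffices to produce \emph{one} positive constant among $\jmath_0(G),\jmath(G),\imath^*(G)$; indeed, since $G$ is a triangulation we have $\deg f=3$ for every face, so $\imath^*(G)>0$ already yields $\jmath_0(G)>0$ by Lemma~\ref{boundedfaces}. Thus everything reduces to one inequality of the form $|dS|\ge c\,|V(S)|$, valid for all finite $S$.

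To prove such an inequality I would combine a curvature upper bound with a density lower bound. First, as in the proof of Theorem~\ref{T:Hyp}, I may assume $S$ is connected, induced and simply connected (filling the bounded complementary components only increases $|V(S)|$ while shrinking the vertex boundary). Since $\deg v\ge 6$ forces $\kappa(v)=1-\deg v/6\le 0$, the Gauss--Bonnet formula GBF-2 \eqref{GBF-2} gives $-6\kappa(S_-)=\sum_{v\in S_-}(\deg v-6)=6\tau_i(b_iS)-6\le |V(bS)|-6$, because each boundary vertex has at least two incident interior triangles and so contributes at most $1/6$ to $\tau_i(b_iS)$. This is the combinatorial ``area $\lesssim$ perimeter'' estimate: the total degree excess carried by the interior of $S$ is controlled by the length of the boundary walk.

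The complementary, and more delicate, ingredient is a lower bound saying that $S$ \emph{must} carry a definite amount of excess relative to its size. Here I would exploit that the degree-$6$ subgraph $G_6$ (induced by all vertices of degree exactly $6$) has maximum degree at most $6$, so a ball of radius $r$ in $G_6$ has at most $6^{\,r+1}$ vertices. The hypothesis that every $B_K(v)$ meets a vertex of degree $\ge 7$ translates, along a shortest path from a flat vertex to its nearest defect, into the statement that every degree-$6$ vertex lies within $G_6$-distance $K-1$ of a degree-$6$ vertex adjacent to some defect. Since a defect $d$ is adjacent to at most $\deg d\le 7(\deg d-6)$ degree-$6$ vertices, covering the flat vertices by these $G_6$-balls yields a bound $|V(S)|\le C_K\,\bigl(\text{degree excess near }S\bigr)$ once one also counts the defects themselves. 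Combining this with the Gauss--Bonnet estimate above converts ``size'' into ``boundary'' and should produce the desired $|dS|\ge c\,|V(S)|$.

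The main obstacle is precisely the interface between these two estimates when degrees are \emph{unbounded}. The density bound naturally charges degree-$6$ vertices to defects lying in a $K$-neighborhood of $S$, which may stick out past $dS$, while the Gauss--Bonnet bound only controls the excess of the \emph{interior} $S_-$, leaving the excess of possibly high-degree boundary vertices unaccounted for. Reconciling the two requires working with the deep interior $\{v:d(v,V\setminus V(S))>K\}$, controlling the bounded-growth collar of width $K$ around the boundary, and routing the argument through $\jmath_0$ and the inner vertex boundary $d_0S$ so that high-degree boundary vertices are not overcounted. This bookkeeping---rather than either estimate individually---is the crux, and it is exactly what forces the detour through the isoperimetric constants of Section~\ref{S:doubt} rather than the simpler Corollary~\ref{C:useless}, which genuinely fails here: a long segment of the flat horizontal line in Figure~\ref{badline} carries zero degree excess yet has a large vertex boundary.
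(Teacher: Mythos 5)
Your reduction is sound and matches the paper's endgame: both arguments aim at a strong isoperimetric inequality, pass through Lemmas~\ref{twoveriso} and \ref{boundedfaces} and Theorem~\ref{T:dual} to get $\jmath(G)>0$, and then conclude CP hyperbolicity from Theorem~\ref{HS2}(a) with $g(n)=cn$ (the paper gets transience from Theorem~\ref{HS1} rather than from nonamenability, but that is a cosmetic difference). The genuine gap is that the decisive estimate $|dS|\ge c\,|V(S)|$ is never actually established. Your two ingredients are individually plausible --- the Gauss--Bonnet bound $\sum_{v\in S_-}(\deg v-6)\le |V(bS)|$ from \eqref{GBF-2}, and the covering bound charging degree-$6$ vertices to nearby defects --- but they live on mismatched sets: the covering bound charges vertices of $S$ to defects in a $K$-neighborhood of $S$ (possibly outside $S$, or on $bS$ where their excess is invisible to the interior Gauss--Bonnet formula), and the inner collar $\{v\in S: d(v,V\setminus V(S))\le K\}$ can a priori be large compared to $|dS|$ when degrees are unbounded. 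You correctly identify this reconciliation as ``the crux,'' but then leave it at ``should produce the desired inequality.'' Since this bookkeeping is precisely where the hypothesis and the assumption $\deg v\ge 6$ must interact quantitatively, the proof is incomplete as written. (A smaller issue: your bound $\tau_i(v)\le 1/6$ at each boundary vertex needs that vertex to have at least two faces of $S$ on its left, which holds for the graph $W$ in Section~\ref{S:T2} by construction but is not automatic for an arbitrary induced simply connected $S$; this only costs a constant, but it should be addressed.)

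The paper sidesteps all of this combinatorics by working on the polyhedral surface $\mathcal{S}_G$: since $\deg v\ge 6$ the surface is a non-positively curved Aleksandrov surface, the natural embedding $G\hookrightarrow\mathcal{S}_G$ is a quasi-isometry, so the hypothesis that every $B_K(v)$ meets a defect becomes the statement that every metric disk $D_R(a)$ carries integral curvature at most $-\pi/3$; Bonk and Eremenko's theorem (Theorem~\ref{T2:BE}) then delivers a linear isoperimetric inequality $\mathrm{Area}(\Omega)\le c\cdot\mathrm{length}(b\Omega)$, which translates directly into $\imath^*(G)>0$ because every face is a unit equilateral triangle. In other words, the hard quantitative step you deferred is exactly what Theorem~\ref{T2:BE} supplies. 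If you want to keep your purely combinatorial route, you would in effect be reproving a discrete version of the Bonk--Eremenko implication (i)$\Rightarrow$(ii); that may well be doable (for instance by applying the covering bound only to the deep interior $\{v: d(v,V\setminus V(S))>K\}$ and using the term-by-term nonnegativity of $\deg v-6$ to bound the excess of all of $S$ by $|dS|$ via Gauss--Bonnet on $S\cup dS$), but it must actually be carried out, with the collar estimates made explicit.
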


Theorem~\ref{T:BE} is in fact an easy consequence of a result in  \cite{BE00} by Bonk and Eremenko, where they proved the following.

\begin{theorem}[Bonk and Eremenko]\label{T2:BE}
Let $M$ be an open simply connected non-positively curved Aleksandrov surface.  Then the following two statements are equivalent.
\begin{enumerate}[(i)]
\item There exist $R > 0$ and $\epsilon > 0$ such that every relatively compact open disk $D_R (a) \subset M$ has integral curvature less than $-\epsilon$,
where $D_R (a)$ is the disk in $M$ with radius $R$ and centered at $a \in M$.
\item A linear isoperimetric inequality holds on $M$. That is, there exists $c >0$ such that all Jordan regions $\Omega \subset M$ satisfy 
\begin{equation}\label{E:isoconst}
\mbox{\rm Area}(\Omega) \leq c \cdot \mbox{\rm length} (b \Omega),
\end{equation}
where $b \Omega$ is the topological boundary of $\Omega$.
\end{enumerate}
\end{theorem}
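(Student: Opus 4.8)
The plan is to treat the two implications of Theorem~\ref{T2:BE} separately, in both cases exploiting the Gauss--Bonnet theorem for Aleksandrov surfaces. Recall that on such a surface the integral curvature $\omega$ is a Radon measure with $\omega \le 0$ (non-positive curvature), that every rectifiable Jordan curve has a well-defined total turning $\tau$, and that for a Jordan region $\Omega$ (a topological disk, so $\chi(\Omega)=1$) one has
\[
\omega(\Omega) + \tau(b\Omega) = 2\pi .
\]
Since $M$ is simply connected and non-positively curved it is a CAT(0) space: geodesics are unique, metric balls $D_r(a)$ are convex topological disks, and by volume comparison $\mathrm{Area}(D_r(a)) \ge \pi r^2$. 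Writing $A(r)=\mathrm{Area}(D_r(a))$ and $L(r)=\mathrm{length}(\partial D_r(a))$, the coarea formula and the first variation of length give, in the a.e.\ sense valid in this category,
\[
A'(r) = L(r), \qquad L'(r) = \tau(\partial D_r(a)) = 2\pi - \omega(D_r(a)).
\]
These identities, together with the monotonicity $\omega(D_s(a)) \ge \omega(D_r(a))$ for $s\le r$ (a consequence of $\omega\le 0$), are the only analytic inputs I need.

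For the implication (ii)$\Rightarrow$(i) I would argue by a growth comparison. The linear isoperimetric inequality reads $A(r) \le c\,L(r) = c\,A'(r)$, so $(\log A)' \ge 1/c$ and hence $A(r) \ge \pi\, e^{(r-1)/c}$ for $r\ge 1$ by the volume lower bound. On the other hand, if $\omega(D_R(a)) = -\eta$ then monotonicity gives $L'(s) = 2\pi-\omega(D_s(a)) \le 2\pi+\eta$ for $s\le R$, whence $L(s)\le (2\pi+\eta)s$ and $A(R)=\int_0^R L \le (\pi+\tfrac{\eta}{2})R^2$. Comparing the two bounds yields $\eta \ge 2\pi\bigl(e^{(R-1)/c}R^{-2}-1\bigr)$. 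Choosing a single large radius $R_0$ with $e^{(R_0-1)/c}R_0^{-2}\ge 2$ forces $\eta\ge 2\pi$ for every center $a$, which is exactly statement (i) with this $R_0$ and $\epsilon=2\pi$. This direction is clean and needs no regularity beyond the two variation formulas.

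The implication (i)$\Rightarrow$(ii) is the substantial one, and I would prove it by peeling $\Omega$ inward. Put $\Omega_t=\{x\in\Omega: \mathrm{dist}(x,b\Omega)\ge t\}$, $\ell(t)=\mathrm{length}(b\Omega_t)$, and let $t_{\max}$ be the inradius of $\Omega$; then $\mathrm{Area}(\Omega)=\int_0^{t_{\max}}\ell(t)\,dt$, while Gauss--Bonnet applied to each $\Omega_t$ gives $\ell'(t) = \omega(\Omega_t)-2\pi \le -2\pi$. Thus $\ell$ is strictly decreasing, and the goal is to show $\int_0^{t_{\max}}\ell \le c\cdot\ell(0)=c\cdot\mathrm{length}(b\Omega)$. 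I would split the range of $t$ into a \emph{thin} phase and a \emph{thick} phase. In the thin phase the inradius of $\Omega_t$ is $<R$; since $\Omega_t$ then goes extinct within further parameter time $<R$ and $\ell$ is decreasing, this phase contributes at most $R\cdot\ell(0)$ to the area, which is linear in the perimeter. In the thick phase the enclosed curvature must be harvested from hypothesis (i): along the portions of $b\Omega_t$ whose inward normal penetrates distance $R$ inside $\Omega_t$, one packs a maximal $2R$-separated family of centers to obtain disjoint balls $D_R(p_i)\subset\Omega_t$, each contributing $-\omega(D_R(p_i))>\epsilon$; this should yield $-\omega(\Omega_t)\ge \delta\,\ell(t)$ for the locally thick part of the curve, turning $\ell'(t)=\omega(\Omega_t)-2\pi$ into $\ell'(t)\le -\delta\,\ell(t)$ and hence exponential decay, so that the thick phase contributes $O(\ell(0)/\delta)$. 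Summing the two phases gives $\mathrm{Area}(\Omega)\le c\cdot\mathrm{length}(b\Omega)$.

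The hard part will be the curvature-harvesting step in the thick phase. The naive hope that hypothesis (i) forces $-\omega(\Omega_t)$ to grow linearly in $\ell(t)$ is false in general: a long flat strip of width $<2R$ satisfies (i) (every $R$-ball centered in it protrudes into the negatively curved exterior) yet carries no interior curvature, so boundary length need not decay exponentially everywhere. This is exactly why the thin/thick decomposition is forced, and why the harvesting estimate must be made \emph{local}, attributing curvature only to those boundary arcs along which an $R$-ball genuinely fits inside $\Omega_t$ while the remaining arcs are absorbed into the bounded-width thin phase. Carrying this out rigorously---defining the thick and thin boundary arcs in the low-regularity Aleksandrov setting, justifying the packing of $R$-balls and the a.e.\ validity of $\ell'(t)=\omega(\Omega_t)-2\pi$ across the non-smooth parameter values, and controlling the topology of $\Omega_t$ (which may disconnect) during the peeling---is where the real work lies; everything else reduces to the Gauss--Bonnet bookkeeping above.
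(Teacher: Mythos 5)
You should first be aware that the paper you are working against does not prove Theorem~\ref{T2:BE} at all: it imports the statement verbatim from Bonk and Eremenko \cite{BE00} and uses it as a black box (only the implication (i)~$\Rightarrow$~(ii) is ever invoked, inside the proof of Theorem~\ref{T:BE}). So there is no internal proof to compare yours with, and your proposal has to be judged as a free-standing proof of the Bonk--Eremenko theorem. Judged that way, your implication (ii)~$\Rightarrow$~(i) is essentially sound: the ingredients you invoke (the comparison bound $\mathrm{Area}(D_r(a))\ge \pi r^2$ in non-positive curvature, the coarea identity $A'=L$, the first-variation/Gauss--Bonnet identity $L'(r)=2\pi-\omega(D_r(a))$, valid here because simply connected non-positively curved surfaces have no cut locus, and the monotonicity of $\omega$ coming from $\omega\le 0$) are all standard in this setting, and the exponential-versus-quadratic growth comparison correctly produces a uniform $R_0$ and $\epsilon$.

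The gap is in (i)~$\Rightarrow$~(ii) --- unfortunately exactly the direction the host paper needs --- and you name it yourself: the curvature-harvesting estimate in the thick phase is not proved, and that estimate is the substance of the theorem. Concretely, two things break. First, the packing step: if you take a maximal $2R$-separated family of points along the thick portion of $b\Omega_t$ and push each one to depth $R$ along an inward normal, the resulting centers need not be $2R$-separated --- the boundary can fold back on itself so that push-ins from far-apart boundary arcs cluster --- so the balls $D_R(p_i)$ need not be disjoint, you cannot add up their curvatures, and no bounded-multiplicity substitute is offered; the number of disjoint $R$-balls you can actually place inside $\Omega_t$ is a priori controlled by its area, not by its boundary length, which makes the argument circular for your purposes. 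Second, the thin/thick bookkeeping: thinness is a property of boundary arcs rather than of components, a component of $\Omega_t$ generically carries thin and thick arcs simultaneously, components split as $t$ increases, and your bound that the thin phase contributes at most $R\cdot\ell(0)$ is justified only for entire components within time $R$ of extinction, not for thin arcs (for instance long flat fingers of width $<2R$) attached to thick cores that survive for a long time. Until the local estimate $-\omega(\Omega_t)\ge\delta\cdot(\mbox{length of the thick arcs of } b\Omega_t)$ is actually formulated and proved in the low-regularity Aleksandrov setting, together with the a.e.\ inequality $\ell'(t)\le\omega(\Omega_t)-2\pi$ for inner parallels that you also assume, what you have is one proved implication plus a plausible program for the other --- a genuine gap rather than a proof.
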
 

Bonk and Eremenko  proved in the same paper that the above two statements are also equivalent to both  Gromov hyperbolicity and  tightness of $M$.
Here we omitted the last two equivalent statements since we do not need them in this paper.

\begin{proof}[Proof of Theorem~\ref{T:BE}]
The polyhedral surface $\mathcal{S}_G$ we defined in Section~\ref{S:CGB} is an open simply connected complete Aleksandrov surface which is locally Euclidean except at the points
corresponding to vertices $v \in V$, where it carries an atomic curvature $2 \pi \kappa(v)$ as we saw in \eqref{curvertex}. Moreover, because 
$\deg v \geq 6$ for all $v \in V$, $\mathcal{S}_G$ is non-positively curved. Let $d_G$ and $d_\mathcal{S}$ be the metrics on $G$ and $\mathcal{S}_G$, respectively, and
$h : G \to \mathcal{S}_G$  be the natural embedding. Then it is not difficult to see that there exist $L>0$ and $C>0$ such that
\begin{enumerate}[(a)]
\item $\frac{1}{L} \cdot d_G (v, w) \leq d_\mathcal{S} \bigl( h(v), h(w) \bigr) \leq L \cdot d_G (v, w)$ for all $v, w \in V$, and
\item for all $a \in \mathcal{S}_G$, there exists $v \in V$ such that $d_\mathcal{S} \bigl( a, h(v) \bigr) \leq C$.
\end{enumerate}
Therefore there exists $R>0$ such that every disk $D_R (a) \subset \mathcal{S}_G$ of radius $R$ includes the image of a combinatorial ball 
$B_K (v) \subset G$ of radius $K$. Now because $B_K (v)$ contains a vertex of degree at least $7$ and $\mathcal{S}_G$ is non-positively curved, the integral curvature
of $D_R (a)$ will be less than or equal to $2 \pi \cdot (-1/6) = - \pi/3$. Thus by Theorem~\ref{T2:BE}, a linear isoperimetric inequality holds on $\mathcal{S}_G$.

Let $S$ be a nonempty subgraph of $G$. Here we want to show that $|F(S)| \leq c_0 |b S|$ for some absolute constant $c_0 > 0$, so by adding to $S$ 
all the finite components of $G \setminus S$, removing from $S$ all the edges and vertices that are not incident to faces in $F(S)$, 
and then considering each component of $D(S)^\circ$ separately, we may assume that $D(S)$ consists of faces and $D(S)^\circ$ is simply connected. 
Now let $\Omega$ be the region in $\mathcal{S}_G$ that corresponds to $D(S)$. Then because $\Omega$ is simply connected and every face in $F(G)$ corresponds to
an equilateral triangle of side length $1$, we have
\[
\frac{\sqrt{3}}{4} \cdot |F(S)| = \mbox{\rm Area}(\Omega) \leq c \cdot \mbox{\rm length}(\partial \Omega) = c \cdot |b S|,
\]
where $c$ is the constant in \eqref{E:isoconst}. Thus $\imath^* (G) \geq \sqrt{3}/4c >0$ because $S \subset G$ is arbitrary, and by Lemmas~\ref{twoveriso} and \ref{boundedfaces}
we see that $\jmath(G) >0$. We conclude that $G$ is CP hyperbolic by Theorem~\ref{HS2}(a), and then $G$ is transient by Theorem~\ref{HS1}.
\end{proof}

The above arguments do not hold if there exists $v \in V$ such that $\deg v <6$, because the surface $\mathcal{S}_G$ has to be non-positively curved.
 (In fact, $G$ is allowed to have two vertices of degree 5 or one vertex of degree 4,
but nevertheless Theorem~\ref{T:BE} cannot be proved using above arguments if there are many vertices of degree at most $5$. See \cite[p.\ 64]{BE00}.)  
In the next criterion $G$ is allowed to have infinitely many vertices of degree at most $5$, but the formulation is a little bit more complicated.

\begin{theorem}\label{T:partition}
Suppose $G$ is a disk triangulation graph. If there exist $\epsilon >0$, $K>0$, and  a partition $\{ \Gamma_\alpha : \alpha \in I \}$ of $V$ such that for all $\alpha \in I$ we have
\begin{enumerate}[(a)]
\item $\kappa(\Gamma_\alpha) \leq - \epsilon$ and
\item $|V(\Gamma_\alpha)| \leq K$,
\end{enumerate}
then $G$ is CP hyperbolic and transient. Here the letter $I$ denotes an index set, and  $\{ \Gamma_\alpha \}$ being a partition of $V$ means that 
each $\Gamma_\alpha$ should be connected in $G$, and we have $\bigcup_{\alpha \in I} V(\Gamma_\alpha) = V(G)$ and $V(\Gamma_\alpha) \cap V(\Gamma_\beta) = \emptyset$ 
for $\alpha \ne \beta$.
\end{theorem}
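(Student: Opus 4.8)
The plan is to derive Theorem~\ref{T:partition} from a single linear vertex-isoperimetric inequality. Concretely, I would show that the hypotheses force $\jmath(G)>0$; that is, there is a constant $\delta>0$ with $|dS|\ge \delta\,|V(S)|$ for \emph{every connected finite} subgraph $S\subset G$. Granting this, CP hyperbolicity follows immediately from the contrapositive of Theorem~\ref{HS2}(a): taking the nondecreasing $g(n)=\delta n$, the perimetric inequality $|dS|\ge g(|S|)$ holds for every connected finite $S\supset S_0$ (with $S_0$ any fixed vertex), whereas $\sum_{n=1}^\infty 1/g(n)^2=\delta^{-2}\sum_{n=1}^\infty n^{-2}<\infty$; hence $G$ cannot be CP parabolic. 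Transience then comes for free from the contrapositive of the first assertion of Theorem~\ref{HS1} (recurrent implies CP parabolic), which carries no bounded-degree hypothesis. So everything reduces to the isoperimetric estimate, and note that only \emph{connected} subgraphs are ever needed.

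For the estimate I would combine two ingredients. The first is a Gauss-Bonnet bound taken verbatim from the proof of Theorem~\ref{T:Hyp}: given a connected finite $S$, form the induced graph $T$ on $V(S)\cup dS$, fill the bounded components of $\mathbb{C}\setminus D(T)$ to obtain a simply connected $W$, and set $Z=W_-$ (its interior vertices). As shown there, $Z$ is connected, $V(S)\subseteq V(Z)$, $dZ\subseteq dS$, $bW$ is a simple cycle, and GBF-2 together with $6\,\tau_i(b_iW)\le |dZ|$ yields
\[
\sum_{v\in Z}(\deg v-6)=-6\,\kappa(Z)=6\,\tau_i(b_iW)-6\le |dZ|-6\le |dS|.
\]
This is purely geometric (no degree hypothesis enters), so it is available here. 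Recall also that on a disk triangulation graph $\kappa(v)=1-\deg v/6$, whence $\deg v-6\ge -3$ for every $v$.

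The second ingredient is the partition. I would examine the clusters $\Gamma_\alpha$ meeting $V(Z)$ and split them into \emph{inner} clusters, with $V(\Gamma_\alpha)\subseteq V(Z)$ (say $N_{\mathrm{in}}$ of them), and \emph{straddling} clusters, meeting both $V(Z)$ and its complement (say $N_{\mathrm{bd}}$ of them). Each straddling cluster, being connected, contains an edge from $V(Z)$ to a vertex of $dZ$; as the clusters are disjoint this defines an injection into $dZ$, so $N_{\mathrm{bd}}\le |dZ|\le |dS|$. Writing the degree-excess sum over $Z$ cluster by cluster and using $\sum_{v\in\Gamma_\alpha}(\deg v-6)=-6\,\kappa(\Gamma_\alpha)\ge 6\epsilon$ on inner clusters together with $\deg v-6\ge -3$ and $|V(\Gamma_\alpha)|\le K$ on the straddling ones gives
\[
|dS|\ge \sum_{v\in Z}(\deg v-6)\ge 6\epsilon\,N_{\mathrm{in}}-3K\,N_{\mathrm{bd}}\ge 6\epsilon\,N_{\mathrm{in}}-3K\,|dS|,
\]
so $N_{\mathrm{in}}\le \tfrac{3K+1}{6\epsilon}|dS|$. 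Since every vertex of $Z$ lies in some cluster meeting $Z$ and each such cluster has at most $K$ vertices, $|V(S)|\le |V(Z)|\le K(N_{\mathrm{in}}+N_{\mathrm{bd}})\le KC\,|dS|$ for an explicit $C=C(\epsilon,K)$, which is the desired inequality with $\delta=1/(KC)$.

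I expect the only delicate point to be the treatment of the straddling clusters: a cluster crossing the boundary of $Z$ may contribute \emph{positive} curvature to $\kappa(Z)$, and one must confirm this positive contribution is controlled by the boundary rather than by the volume. The uniform bound $\deg v-6\ge -3$ combined with the injective assignment of straddling clusters into $dZ$ is exactly what forces the straddling term to be $O(|dS|)$ instead of $O(|V(Z)|)$; pinning down these bookkeeping constants, and checking throughout that only connected subgraphs are required so that Theorem~\ref{HS2}(a) and Theorem~\ref{HS1} apply, is the crux. The remainder is routine.
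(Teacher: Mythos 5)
Your proof is correct, but it takes a genuinely different route from the paper. The paper disposes of this theorem in two lines: it cites \cite[Theorem~6]{OS16} for the implication that conditions (a) and (b) force $\jmath_0(G)>0$, upgrades this to $\jmath(G)>0$ via Lemma~\ref{twoveriso}, and then concludes exactly as in the proof of Theorem~\ref{T:BE} (CP hyperbolicity from the contrapositive of Theorem~\ref{HS2}(a) with $g$ linear, transience from the contrapositive of the first half of Theorem~\ref{HS1}). Your endgame is identical, but you replace the external citation by a self-contained derivation of the linear isoperimetric inequality using the paper's own machinery: the bound $\sum_{v\in Z}(\deg v-6)\le |dS|$ extracted from GBF-2 in the proof of Theorem~\ref{T:Hyp} (which indeed uses nothing beyond $G$ being a disk triangulation graph and $S$ being connected and finite), combined with the inner/straddling cluster decomposition, the injection of straddling clusters into $dZ$, and the uniform lower bound $\deg v-6\ge -3$. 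I checked the bookkeeping: $N_{\mathrm{bd}}\le|dZ|\le|dS|$, $6\epsilon N_{\mathrm{in}}-3KN_{\mathrm{bd}}\le\sum_{v\in Z}(\deg v-6)\le|dS|$, and $|V(S)|\le|V(Z)|\le K(N_{\mathrm{in}}+N_{\mathrm{bd}})$ do yield $|dS|\ge\delta|V(S)|$ with $\delta$ depending only on $\epsilon$ and $K$, and restricting to connected $S$ is indeed all that Theorem~\ref{HS2}(a) requires. What your approach buys is independence from \cite{OS16} and an explicit constant; what it gives up is generality, since the cited result holds for arbitrary tessellations whereas your curvature identities $\kappa(v)=1-\deg v/6$ and $\deg v-6\ge-3$ are specific to triangulations — which is, however, all that this theorem needs.
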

\begin{proof}
It was shown in \cite[Theorem~6]{OS16} that  if $G$ is a tessellation  satisfying the conditions (a) and (b), then we have $\jmath_0 (G) >0$. Therefore $\jmath(G)>0$ by Lemma~\ref{twoveriso},
hence $G$ is CP hyperbolic and transient as in the proof of Theorem~\ref{T:BE}.
\end{proof}

The condition (a) above actually means that for each $\alpha$, there should be a lot of edges in $E(\Gamma_\alpha) \cup \partial \Gamma_\alpha$
for the size of $\Gamma_\alpha$. In fact, we have
\[
\kappa(\Gamma_\alpha) = \sum_{v \in V(\Gamma_\alpha)} \frac{6 - \deg v}{6} = |V(\Gamma_\alpha)| - \frac{1}{6} \sum_{v \in V(\Gamma_\alpha)} \deg v,
\]
so the condition (a)  can be restated as {\it
\begin{enumerate}[(a$'$)]
\item the average vertex degree of $\Gamma_\alpha$ is greater than $6$; i.e., $$\frac{1}{|V(\Gamma_\alpha)|} \sum_{v \in V(\Gamma_\alpha)} \deg v >6.$$
\end{enumerate}
}
\noindent This is because  $6 \cdot \kappa(v) \in \mathbb{Z}$ for every $v \in V$.

One can see that either of Theorem~\ref{T:BE} or Theorem~\ref{T:partition} implies that the example in Figure~\ref{badline} is CP hyperbolic, 
but Corollary~\ref{C:useless} cannot be applied to this graph as we saw before.
On the other hand, the example in Figure~\ref{F:H_n2} can be shown to be CP hyperbolic by Corollary~\ref{C:useless}, depending on the
lower bounds of $|l_n|$, but no isoperimetric constant in \eqref{isoconst} is positive for this graph because it contains $H_n$ for all $n \in \mathbb{N}$. Therefore neither of
Theorems~\ref{T:BE} and \ref{T:partition} can be applied to this graph.

\section{Vertex extremal length and square tilings}\label{S:VEL}
In this section we will discuss vertex extremal length of a graph, one of the main tools used by He and Schramm in \cite{HS95}. Suppose a connected graph $G=(V, E)$ is given.
A function $\mu : V \to [0, \infty)$ is called a \emph{metric} defined on $V$, and the area of $\mu$ is defined by
\[
\mbox{\rm area}_\mu (V) = \| \mu \|_V^2 := \sum_{v \in V} \mu(v)^2.
\]
The metric $\mu$ is called \emph{admissible} if $0<\| \mu \|_V < \infty$, and $\mathcal{M}(V)$ denotes the set of admissible metrics on $V$.
For a simple path  $\gamma$ in $G$,  we define the $\mu$-\emph{length} of $\gamma$  by 
\[
L_\mu (\gamma) := \sum_{v \in V(\gamma)} \mu (v).
\]
If $\Gamma$ is a collection of simple paths in $G$, we let 
$$L_\mu (\Gamma) := \inf_{\gamma \in \Gamma} L_\mu (\gamma),$$ 
and we define the \emph{vertex extremal length}(VEL) of $\Gamma$ by
\[
\mbox{VEL}(\Gamma) := \sup_{\mu \in \mathcal{M}(V)} \frac{L_\mu (\Gamma)^2}{\mbox{\rm area}_\mu (V)} =
\sup_{\mu \in \mathcal{M}(V)} \left\{ \inf_{\gamma \in \Gamma} \frac{L_\mu (\gamma)^2}{\mbox{\rm area}_\mu (V)} \right\}.
\]
Finally for disjoint subsets $A, B \subset V$,  we let $\Gamma(A, B)$ be the collection of simple paths in $G$ connecting $A$ to $B$. Then the \emph{vertex extremal distance}
between $A$ and $B$ is defined by 
$\mbox{\rm VEL}(A, B) := \mbox{VEL}(\Gamma(A, B)).$
 Here we allow  $B = \infty$ if $G$ is an infinite graph, but in this case $\Gamma(A, \infty)$ should be  the collection of simple paths that start at vertices in $A$ and 
 eventually leave every finite subset of $V$. Definitely every $\gamma\in \Gamma(A, \infty)$ must be an infinite path of the form $\gamma = [a_0, a_1, a_2, \ldots]$. 
We also use the notation $\mbox{\rm VEL}(v, \infty)$ and $\Gamma(v,\infty)$ for $\mbox{\rm VEL}(\{v \}, \infty)$ and $\Gamma(\{v\},\infty)$, respectively.

Vertex extremal length was introduced by Cannon \cite{Can94} and is a discrete analogue of the classical extremal length (cf.\ \cite{Ahlfors73, Lehto}).
Another discrete analogue is the one called \emph{edge extremal length}(EEL), which was introduced earlier than VEL by Duffin \cite{Duf62}. It is well known that
EEL is closely related to the properties of electric network and random walk, 
whereas VEL is related to circle packing types. In fact, it was VEL and EEL properties that were extensively used in the proof  of Theorem~\ref{HS1}.

Below are listed some properties of VEL that are needed later in this paper.
\begin{theorem}\label{list}
For a connected graph $G=(V,E)$ and disjoint subsets $A, B \subset V$, the following hold.
\begin{enumerate}[(a)]
\item If $\mbox{\rm VEL}(v, \infty) = \infty$ for some $v \in V$, then $\mbox{\rm VEL}(A, \infty) = \infty$ for every finite $A \subset V$.
In this case the graph $G$ is called \emph{VEL parabolic}, and otherwise  $G$ is called \emph{VEL hyperbolic}.
\item If $V$ is finite, or if there are only finitely many vertices between $A$ and $B$, then there exists $\mu_0 \in \mathcal{M}(V)$ such that 
$\mbox{\rm VEL}(A, B) = L_{\mu_0} (\Gamma(A,B))^2/\mbox{\rm area}_{\mu_0}(V)$. Such $\mu_0$ is called an extremal metric, and extremal metrics are
unique up to multiplication by positive constants.
\item $\mbox{\rm VEL}(v, \infty) = \infty$ if and only if there exists $\mu_0 \in \mathcal{M}(V)$ such that $L_{\mu_0} (\gamma) = \infty$ for every $\gamma \in \Gamma(v,\infty)$.
Such $\mu_0$ is also called an extremal metric.
\item If $G$ is a disk triangulation graph, then $G$ is CP parabolic if and only if $G$ is VEL prabolic.
\item Suppose $\phi : H \to G$ is a graph homomorphism such that for some $K >0$, we have $|\phi^{-1}(\{v \}) | \leq K$ for every $v \in V$; i.e., we assume that every $v \in V$ 
has at most $K$ preimages. Then if $G$ is VEL parabolic, so is $H$. In particular, if $G$ includes a VEL hyperbolic subgraph, then $G$ itself must be VEL hyperbolic.
\end{enumerate}
\end{theorem}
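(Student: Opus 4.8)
The plan is to handle parts (a)–(c) and (e) by elementary arguments with vertex metrics, and to obtain (d) by quoting He and Schramm \cite{HS95}, who establish exactly the equivalence of CP type and VEL type for disk triangulation graphs. I would prove the parts in the order (b), (c), (a), (e): part (b) furnishes the extremal metrics from which the single metric of (c) is built, (a) is then an immediate corollary of (c), and (e) likewise follows from (c). I expect the forward direction of (c) to be the main obstacle, with (d) being deep but delegated to the literature.

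For (b) I would normalize by the condition $L_\mu(\Gamma(A,B)) \geq 1$, so that $\mbox{VEL}(A,B) = 1/\inf \|\mu\|_V^2$, the infimum being over all $\mu$ with $L_\mu(\gamma)\geq 1$ for every $\gamma \in \Gamma(A,B)$. Each constraint $L_\mu(\gamma) = \sum_{v\in V(\gamma)}\mu(v)\geq 1$ is linear in $\mu$, so the admissible set is closed and convex; since every metric may be replaced by one supported on the finitely many vertices lying between $A$ and $B$ without raising the area or lowering path lengths, the problem becomes finite-dimensional. The functional $\|\mu\|_V^2$ is continuous, coercive, and strictly convex, so a minimizer exists by compactness and is unique by strict convexity, and this minimizer is the extremal metric.

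The heart of the theorem is the forward direction of (c): from $\mbox{VEL}(v,\infty)=\infty$ one must produce a single $\mu_0\in\mathcal{M}(V)$ with $L_{\mu_0}(\gamma)=\infty$ on every $\gamma\in\Gamma(v,\infty)$. Here I would exhaust $G$ by the balls $B_n$ and, using (b), take an extremal metric $\mu_n$ for the annular family $\Gamma(v, V\setminus B_n)$, normalized to unit area and length $\sqrt{d_n}$, where $d_n=\mbox{VEL}(v,V\setminus B_n)\to\infty$. Setting $\mu_0=\sum_n a_n\mu_n$ with coefficients chosen so that $\sum_n a_n^2$ converges (keeping $\|\mu_0\|_V<\infty$) while the length accumulated across the annuli by any escaping path diverges, yields the metric; arranging both conditions simultaneously is the delicate bookkeeping and the principal difficulty. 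The converse is immediate, since such a $\mu_0$ gives $L_{\mu_0}(\Gamma(v,\infty))=\infty$ with finite area. Part (a) then follows: given the $\mu_0$ of (c) and a finite set $A$, for each $a\in A$ prepend a finite path $P$ joining $v$ to $a$; any $\gamma\in\Gamma(A,\infty)$ starting at $a$ gives a walk $P\cup\gamma$ from $v$ to $\infty$ containing a simple path $\gamma'\in\Gamma(v,\infty)$, whence $\infty=L_{\mu_0}(\gamma')\leq L_{\mu_0}(P)+L_{\mu_0}(\gamma)$ forces $L_{\mu_0}(\gamma)=\infty$, so $\mbox{VEL}(A,\infty)=\infty$.

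For (e) I would pull back the metric. With $G$ VEL parabolic, fix $\mu_G=\mu_0$ from (c) and set $\mu_H=\mu_G\circ\phi$; then
\[
\|\mu_H\|_{V(H)}^2 = \sum_{w\in V(H)}\mu_G(\phi(w))^2 = \sum_{v\in V}\bigl|\phi^{-1}(\{v\})\bigr|\,\mu_G(v)^2 \leq K\,\|\mu_G\|_V^2 < \infty,
\]
so $\mu_H\in\mathcal{M}(V(H))$. Any $\delta\in\Gamma_H(w_0,\infty)$ cannot remain in $\phi^{-1}(F)$ for a finite $F\subset V$, since that preimage has at most $K|F|$ vertices; hence $\phi(\delta)$ escapes to infinity in $G$, and concatenating with a finite path from $v$ to $\phi(w_0)$ and extracting a simple path $\gamma'\in\Gamma(v,\infty)$ gives $\infty=L_{\mu_G}(\gamma')\leq L_{\mu_G}(P)+L_{\mu_H}(\delta)$, so $L_{\mu_H}(\delta)=\infty$ and $H$ is VEL parabolic by (c). The final assertion is the contrapositive of the special case $\phi=$ inclusion (with $K=1$) applied to a subgraph. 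Finally, (d) I would cite directly from \cite{HS95}, where the circle packing radii supply a near-extremal metric linking the combinatorial VEL to the CP type. Thus the only genuine hurdles are the coefficient choice in (c) and the depth of the cited equivalence (d); the remaining parts are routine once (c) is available.
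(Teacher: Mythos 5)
The paper does not actually prove this theorem: its ``proof'' consists of citing \cite{HS95, Sch93} for parts (b)--(d) and leaving (a) and (e) to the reader. Your proposal therefore supplies substantially more than the paper does, and most of it is sound. The convexity/compactness argument for (b), the loop-erasure argument deducing (a) from (c), the pull-back argument for (e) (including the counting identity $\sum_{w}\mu_G(\phi(w))^2=\sum_v|\phi^{-1}(\{v\})|\mu_G(v)^2\leq K\|\mu_G\|_V^2$ and the observation that $\phi(\delta)$ must escape every finite set because preimages of finite sets have at most $K|F|$ vertices), and the delegation of (d) to He--Schramm all match the standard route and are correct. Notably, the author's own Remark 8.1 in Section 8 carries out exactly the kind of superposition $\mu=\sum_k \mu_{n_k}/2^k$ that you describe for (c), so your plan is consistent with how the paper itself uses these facts.

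There is, however, one concrete slip in the key step of (c). With each $\mu_n$ normalized to unit area, $\|\mu_n\|_V=1$, the bound on the area of $\mu_0=\sum_n a_n\mu_n$ comes from the triangle inequality for the $\ell^2$-norm, $\|\mu_0\|_V\leq\sum_n a_n\|\mu_n\|_V=\sum_n a_n$; the condition you impose, $\sum_n a_n^2<\infty$, does \emph{not} keep the area finite, because the extremal metrics for the nested families $\Gamma(v,V\setminus B_n)$ have heavily overlapping supports (they all charge the vertices near $v$). For instance $a_n=1/n$ satisfies $\sum a_n^2<\infty$, yet if the $\mu_n$ were all comparable near $v$ the area of $\mu_0$ would behave like $(\sum 1/n)^2=\infty$. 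The repair is standard and cheap: require $\sum_n a_n<\infty$ instead, and pass to a subsequence $n_k$ with $d_{n_k}\geq 4^k$, taking $a_k=2^{-k}$. Since every $\gamma\in\Gamma(v,\infty)$ contains an initial segment joining $v$ to $V\setminus B_{n_k}$ for \emph{every} $k$, one gets $L_{\mu_0}(\gamma)\geq\sum_k a_k\sqrt{d_{n_k}}\geq\sum_k 1=\infty$, while $\mathrm{area}_{\mu_0}(V)\leq(\sum_k a_k)^2\leq 1$. This is precisely the computation in Remark 8.1 of the paper. With that correction your argument for (c), and hence for (a) and (e), goes through.
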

\begin{proof}
See \cite{HS95, Sch93}  for (b)--(d). The proofs for (a) and (e) are simple and left to the reader.
\end{proof}

We next consider square tilings studied in \cite{CFP94} and \cite{Sch93}.  
Let $R$ be a rectangle $[0,1]\times [0, h] \subset \mathbb{R}^2$ or a ring(cylinder without top and bottom) $S^1 \times [0, h] \subset \mathbb{R}^3$, where $h$ is a positive number and $S^1$ is the circle in the $xy$-plane with circumference $1$ and centered at the origin.
A \emph{square tiling} $\mathcal{T}$ of $R$ is a finite collection of squares $\{ Q_v : v \in V \}$ such that $\bigcup_{v \in V} Q_v = R$
and for $v \ne w$ the squares $Q_v$ and $Q_w$ have disjoint interiors. Here every square $Q_v$ must have sides parallel to axes if $R$ is a rectangle, 
or parallel to either the $xy$-plane or the $z$-axis if $R$ is a ring. If $R$ is a ring, moreover,
 a square should be interpreted as a square on the ring $R$, not a square in $\mathbb{R}^3$, hence if a side is  parallel to the $xy$-plane then it is not 
 a line segment but a circular arc. 
 We further assume that no four distinct squares in $\mathcal{T}$ have nonempty intersections, and define the contact graph $G=(V, E)$ of $\mathcal{T}$ as we did for circle packings.
 That is, $G$ is defined so that the vertex set $V$ is just the index set of the square tiling, and there is an edge $[v,w] \in E$ for $v, w \in V$,
 $v \ne w$, if and only if $Q_v \cap Q_w \ne \emptyset$. In these assumptions  one can check that $G$
 is a triangulation graph of a closed (topological) disk or annulus, depending on whether $R$ is a rectangle or a ring, respectively. Also see \cite[Observation~1.2]{Sch93}.
 
 Let $R_b$ and $R_t$ be the bottom and the top sides of $R$. That is, let $R_b = [0,1]\times \{0\}$ and $R_t =[0,1]\times \{h \}$ if $R$ is a rectangle, 
 and  $R_b = S^1 \times \{0\}$ and $R_t =S^1\times \{h \}$ if $R$ is a ring.
 Now we define $A = \{ v \in V : Q_v \cap R_b \ne \emptyset \}$ and $B = \{ v \in V : Q_v \cap R_t \ne \emptyset \}$.
 \begin{lemma}\label{lemma1}
 Let $G$, $A$, $B$ be as above. Then we have $\mbox{\rm VEL}(A,B) = h$ with an extremal metric 
 \[
 \mu_0 (v) =  \mbox{\rm the side length of the square } Q_v.
 \]
 Moreover, any other  extremal metric must be of the form $k \cdot \mu_0 (v)$ for some $k >0$.
 \end{lemma}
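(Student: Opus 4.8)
The plan is to prove the two-sided bound $\mbox{\rm VEL}(A,B)=h$ by establishing $\geq h$ and $\leq h$ separately, with $\mu_0$ realizing the supremum, and then to read off uniqueness from Theorem~\ref{list}(b). Throughout write $s_v=\mu_0(v)$ for the side length of $Q_v$, and note first that $\mu_0$ is admissible with $\mbox{\rm area}_{\mu_0}(V)=\sum_{v} s_v^2=\sum_v \mbox{\rm Area}(Q_v)=\mbox{\rm Area}(R)=h$, since the squares tile $R$ and $R$ has area $1\cdot h=h$ in both the rectangle and ring cases.

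For the lower bound I would argue that every $\gamma=[a_0,a_1,\dots,a_n]\in\Gamma(A,B)$ has $\mu_0$-length at least $h$. Let $I_v\subset[0,h]$ be the orthogonal projection of $Q_v$ onto the vertical ($z$-)axis, an interval of length $s_v$. Consecutive squares along $\gamma$ are adjacent in $G$, hence intersect, so $I_{a_j}\cap I_{a_{j+1}}\ne\emptyset$ for each $j$; moreover $a_0\in A$ forces $0\in I_{a_0}$ and $a_n\in B$ forces $h\in I_{a_n}$. Thus $\bigcup_j I_{a_j}$ is a connected subset of $[0,h]$ containing both endpoints, so it equals $[0,h]$, and therefore $L_{\mu_0}(\gamma)=\sum_j s_{a_j}\ge\big|\bigcup_j I_{a_j}\big|=h$. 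Taking the infimum over $\gamma$ gives $L_{\mu_0}(\Gamma(A,B))\ge h$, whence $\mbox{\rm VEL}(A,B)\ge L_{\mu_0}(\Gamma(A,B))^2/\mbox{\rm area}_{\mu_0}(V)\ge h^2/h=h$.

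For the upper bound I would average over vertical slices. For all but finitely many horizontal positions $x$ (in $[0,1]$ for a rectangle, or on $S^1$ for a ring), the vertical segment $\{x\}\times[0,h]$ meets the interiors of a sequence of squares which, ordered by height, crosses consecutive squares through a shared horizontal edge of positive length; this sequence is therefore a simple path $\gamma_x\in\Gamma(A,B)$, its bottom square lying in $A$ and its top square in $B$. Given any $\mu\in\mathcal{M}(V)$, integrating the bound $L_\mu(\Gamma(A,B))\le L_\mu(\gamma_x)$ over $x$ and using that $\{x:Q_v\text{ meets the slice}\}$ is exactly the horizontal extent of $Q_v$, of measure $s_v$, yields
\[
L_\mu(\Gamma(A,B))\le\int L_\mu(\gamma_x)\,dx=\sum_{v\in V}\mu(v)\,s_v.
\]
By the Cauchy--Schwarz inequality,
\[
\Big(\sum_{v\in V}\mu(v)\,s_v\Big)^2\le\Big(\sum_{v\in V}\mu(v)^2\Big)\Big(\sum_{v\in V}s_v^2\Big)=\|\mu\|_V^2\cdot h,
\]
so $L_\mu(\Gamma(A,B))^2/\mbox{\rm area}_\mu(V)\le h$; taking the supremum over $\mu$ gives $\mbox{\rm VEL}(A,B)\le h$. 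Combining the two bounds shows $\mbox{\rm VEL}(A,B)=h$, and specializing the upper-bound computation to $\mu=\mu_0$ gives $L_{\mu_0}(\Gamma(A,B))\le\sum_v s_v^2=h$, matching the lower bound, so $\mu_0$ attains the supremum and is extremal.

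Finally, since the square tiling is finite the vertex set $V$ is finite, so Theorem~\ref{list}(b) guarantees that the extremal metric for $\mbox{\rm VEL}(A,B)$ is unique up to multiplication by a positive constant; as $\mu_0$ is extremal, every extremal metric has the form $k\mu_0$ with $k>0$. The step I expect to be the main obstacle is the geometric bookkeeping in the upper bound: verifying that for a generic slice the squares it meets really do form a simple path in $G$ (consecutive ones overlapping in a nondegenerate edge), and that the exceptional set of positions $x$ is null and hence negligible in the integral. This is essentially Schramm's Observation~1.2 in \cite{Sch93}, which I would invoke; the remaining inequalities are then just the interval-covering estimate and Cauchy--Schwarz.
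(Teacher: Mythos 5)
Your proof is correct and is essentially the argument the paper intends: the paper merely cites Schramm's Lemmas 4.1 and 3.1 and leaves the details to the reader, and your projection/interval-covering lower bound together with the slice-averaging plus Cauchy--Schwarz upper bound is precisely the standard length-area argument contained in those references. Your one deviation --- deducing uniqueness from Theorem~\ref{list}(b) rather than reproving Schramm's Lemma~3.1 --- is harmless, since the graph here is finite and Theorem~\ref{list}(b) already records exactly that uniqueness statement.
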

 \begin{proof}
 The facts that $\mbox{\rm VEL}(A,B) = h$ and $\mu_0$ is an extremal metric  can be proved with a small modification of the arguments in \cite[Lemma~4.1]{Sch93}. 
 The proof for the uniqueness part is given in \cite[Lemma~3.1]{Sch93}. We leave the details to the reader.
 \end{proof}
 
 \section{Graphs without cut locus and proof of Theorem~\ref{T:S}}\label{S:des}
Let $G=(V,E)$ be a disk triangulation graph and suppose $w \in V$. Following \cite{BP01, BP06}, we define the \emph{cut locus} of $w$ as
\[
C(w) := \{ v \in V : d(w, v') \leq d(w, v) \mbox{ for all neighbors } v' \mbox{ of } v \}.
\]
Therefore $C(w) \subset V$ is the set where the function $v \mapsto d(w, v)$ attains its local maxima. Also note that $C(w) = \emptyset$ if and only if 
every $v \in V$ has a neighbor $v'$ such that $d(w, v') = d(w, v) + 1 > d(w, v)$. If $C(w) = \emptyset$  for all $w \in V$, we will say that $G$ has \emph{no cut locus}.

The definition of cut locus might be confusing because it has nothing to do with the number of shortest paths. That is, a vertex $v$ could belong to $C(w)$
even though there is only one shortest path connecting $v$ and $w$, and it is possible to have $v \notin C(w)$ even though there are more than one shortest paths between $v$ and $w$.
See Figure~\ref{F:cutlocus}.
 \begin{figure}[t]
\begin{center}
 \input{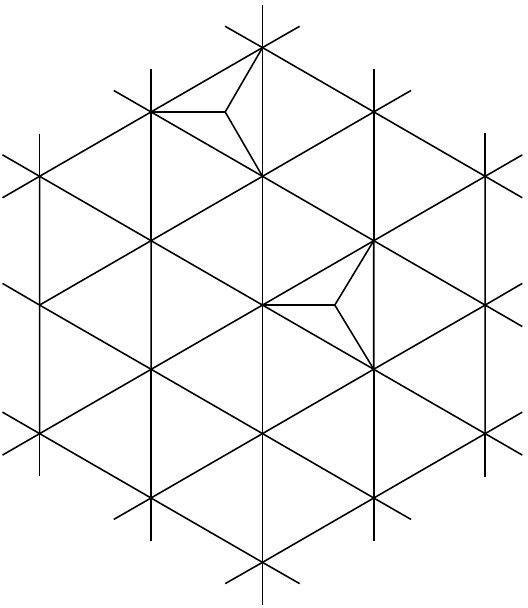_t}
 \caption{Cut locus of $w$. The vertices $v_1$ and $v_2$ belong to $C(w)$ even though there is only one shortest path from $w$ to each of these vertices,
 while $v_3 \notin C(w)$ even though there are two shortest paths from $w$ to $v_3$.}\label{F:cutlocus}
 \end{center}
\end{figure}

Fix $v_0 \in V$, and suppose $C(v_0) = \emptyset$. Then because every $v \in S_n = S_n (v_0)$ has a neighbor $v' \in S_{n+1}$, it is not difficult to see that
$B_n = B_n (v_0)$ is simply connected for every $n \in \mathbb{N}$; i.e., we have $m =1$ and $b B_n = b A_n = \Gamma_1$ in the computations performed in Section~\ref{S:T1}. 
Moreover, no vertices in $S_n$ can lie inside $b B_n$, so we must have $|b B_n| = |S_n|$. 
Finally it is also true that $|S_{n+1}| = | b B_n | + \mbox{(total number of extra edges)}$,
since otherwise there would be a vertex in $S_{n+1}$ lying inside  $b B_{n+1}$. 
Therefore the proof in Section~\ref{S:T1} actually shows that  \eqref{spheresize} becomes an equality in this case; i.e., we have
\[
|S_{n+1}|  =  \sum_{j=0}^n (k_j +6) =: a_{n+1}
\]
for every $n =0,1,2, \ldots$, where $\{ k_j \}$ is the degree excess sequence defined in Theorem~\ref{Rep}.

Baues and Peyerimhoff showed in  \cite{BP06} (cf.\ \cite{BP01, Lyn67}) that if the \emph{corner curvature} 
\[
\kappa (v, f) = \frac{1}{\deg v} + \frac{1}{\deg f} - \frac{1}{2}
\]
is non-positive for all pairs $(v, f) \in V \times F$ such that $v$ and $f$ are incident to each other, then $G$ has no cut locus. But in our case  $G$ is a disk triangulation graph,
hence corner curvatures become
\[
\kappa (v, f) =  \frac{1}{\deg v}  - \frac{1}{6},
\]
which is less than or equal to zero if and only if $\deg v \geq 6$. Thus we obtain the following theorem, which might be interesting by itself.

\begin{theorem}\label{degree6}
Suppose $G$ is a disk triangulation graph such that the  function $v \mapsto d(v_0, v)$ has no local maxima (i.e., $C(v_0) = \emptyset$). Then we have
\[
|S_n (v_0)| =  \sum_{j=0}^{n-1} (k_j +6) = a_n \quad \mbox{and} \quad |B_n (v_0)| = 1+ \sum_{k=1}^n a_k
\]
for every $n =1,2, \ldots$. In particular, the above formulae hold if $\deg v \geq 6$ for every vertex $v \in V$.
\end{theorem}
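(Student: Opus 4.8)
The plan is to show that, under the hypothesis $C(v_0)=\emptyset$, every inequality established in the proof of Theorem~\ref{T1} in Section~\ref{S:T1} degenerates into an equality. The starting point is the observation that $C(v_0)=\emptyset$ means each vertex $v\in S_n=S_n(v_0)$ has a neighbor in $S_{n+1}$; from this I would first argue that every combinatorial ball $B_n=B_n(v_0)$ is simply connected. Indeed, if $D(B_n)^\circ$ had a bounded complementary component, some vertex at distance $n$ from $v_0$ would be enclosed with no edge leading outward, contradicting the existence of a neighbor in $S_{n+1}$. Simple connectivity forces $m=1$ and $A_n=B_n$ in the notation of Section~\ref{S:T1}, so that $bB_n=bA_n=\Gamma_1$ is a single simple cycle.

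With $m=1$ in hand, I would sharpen the three counting relations of Section~\ref{S:T1}. First, since no vertex of $S_n$ can be trapped in the interior of $bB_n$, the bound \eqref{E:S_n} becomes the equality $|S_n|=|bB_n|$. Second, because every vertex of $S_{n+1}$ is reached either through a boundary face or an extra edge, and no such vertex lies strictly inside $bB_{n+1}$, the inequality \eqref{Sn+1} also becomes an equality, $|S_{n+1}|=|bB_n|+(\text{total number of extra edges})$. Third, substituting $m=1$ into the Gauss--Bonnet identity \eqref{degreex} gives $(\text{total number of extra edges})=k_n+6$, whence $|S_{n+1}|=|S_n|+k_n+6$. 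Combined with $|S_1|=\deg v_0=k_0+6=a_1$, an immediate induction yields $|S_{n+1}|=\sum_{j=0}^n(k_j+6)=a_{n+1}$, i.e.\ $|S_n|=a_n$ for every $n\ge 1$. The ball formula then follows by summation, $|B_n|=|S_0|+\sum_{k=1}^n|S_k|=1+\sum_{k=1}^n a_k$, since $|S_0|=1$.

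For the final assertion I would invoke the Baues--Peyerimhoff criterion \cite{BP06}: if the corner curvature $\kappa(v,f)=\frac{1}{\deg v}+\frac{1}{\deg f}-\frac{1}{2}$ is non-positive at every incident pair $(v,f)$, then $G$ has no cut locus. For a disk triangulation graph $\deg f=3$, so $\kappa(v,f)=\frac{1}{\deg v}-\frac{1}{6}\le 0$ exactly when $\deg v\ge 6$; hence $\deg v\ge 6$ for all $v$ guarantees $C(v_0)=\emptyset$, and the two formulae apply.

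The main obstacle is the first step---rigorously converting simple connectivity of $B_n$ (a topological statement) into the exact vertex counts $|S_n|=|bB_n|$ and $|S_{n+1}|=|bB_n|+(\text{total number of extra edges})$. One must rule out the very pathologies that motivated replacing $B_n$ by $A_n$ in Section~\ref{S:T1}: vertices of $S_n$ sitting in the interior, or vertices carrying $-1$ extra edge and thereby creating a deficit. The no-cut-locus hypothesis is precisely what excludes these, since it forces an outward edge at every boundary vertex and prevents any vertex of a given sphere from being surrounded by vertices of the same or larger radius. Once this is settled, the remaining computation is exactly the $m=1$ specialization of the work already carried out in Section~\ref{S:T1}.
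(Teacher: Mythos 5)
Your proposal follows the paper's own argument essentially verbatim: establish simple connectivity of $B_n$ from $C(v_0)=\emptyset$ so that $m=1$ and $A_n=B_n$, turn the inequalities \eqref{E:S_n}, \eqref{Sn+1}, and the chain leading to \eqref{spheresize} into equalities, and deduce the ``in particular'' clause from the Baues--Peyerimhoff corner-curvature criterion. The only quibble is cosmetic: a vertex enclosed by a bounded complementary component of $D(B_n)$ would lie at distance greater than $n$ from $v_0$ (one at maximal distance is then a local maximum), not at distance $n$ as you wrote, but this does not change the argument.
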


For example, if $G$ is the regular hexagonal triangulation of the plane, then we have $k_n=0$ for all $n \in \mathbb{N}$ and obtain
$|S_n| = 6n$ and $|B_n| = 1+3n(n+1)= 3n^2 + 3n +1$ for $n \in \mathbb{N}$, as we computed in the example given by Figure~\ref{F:H_n2}.
 Now we are ready to construct a graph needed for Theorem~\ref{T:S}, which is essentially the same as the graph constructed in \cite[Theorem~6.3]{Wood09}.

\begin{proof}[Proof of Theorem~\ref{T:S}]
Suppose $\{ k_n \}_{n=0}^\infty$ is a sequence of integers satisfying the inequality
\[
 a_n = \sum_{j=0}^{n-1} (k_j +6) \geq 3
 \]
for $n \geq 1$, and define $\mathcal{B}_n$ as the ring made up of $a_n$ squares of side lengths $1/a_n$, 
which are placed in $\mathcal{B}_n$ as one layer. See Figure~\ref{F:band}. 
 \begin{figure}[t]
\begin{center}
 \input{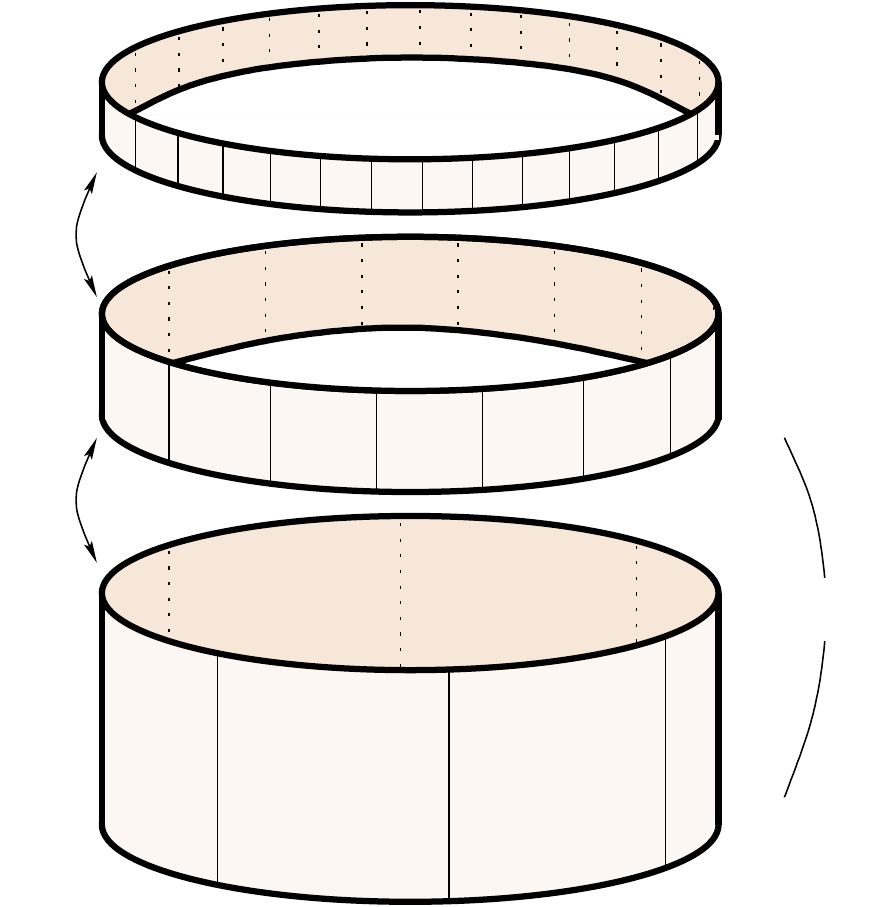_t}
 \caption{Pasting rings made up of squares.}\label{F:band}
 \end{center}
\end{figure}
We then paste the top side(circle) of $\mathcal{B}_1$ to the bottom side(circle) of $\mathcal{B}_{2}$ so that 
no four distinct squares in $\mathcal{B}_1 \cup \mathcal{B}_2$ have nonempty intersections. 
Set $\mathcal{A}_1 = \mathcal{B}_1$, and we define $\mathcal{A}_2$ as the ring obtained by pasting $\mathcal{B}_1$ and $\mathcal{B}_2$. Next we paste the top side of $\mathcal{A}_2$ to the bottom side of $\mathcal{B}_3$ such that
no four distinct squares in $\mathcal{A}_2 \cup \mathcal{B}_3$ have nonempty intersections, and obtain a new ring $\mathcal{A}_3$.

We repeat this process and get a sequence of rings $\mathcal{A}_1 \subset \mathcal{A}_2 \subset \cdots$. Let $G_n$ be the contact graph of $\mathcal{A}_n$. Then definitely we can embed $G_n$
into the plane so that $G_1 \subset G_2 \subset \cdots$. Now by adding a vertex $v_0$ to the graph $\bigcup_{n=1}^\infty G_n$ 
and connecting $v_0$ to the vertices corresponding to the squares in $\mathcal{B}_1$, we get a disk triangulation graph, say $G$.
From the construction  the function $v \mapsto d(v_0, v)$ cannot have local maxima, and we have  $|S_n (v_0)| = a_n$. 
Therefore Theorem~\ref{degree6} implies that the sequence $\{ k_n \}_{n=0}^\infty$ must be the degree excess sequence of $G$; i.e., $k_n = \sum_{v \in B_n} ( \deg v - 6)$.

Now suppose that $\sum 1/a_n < \infty$, and we claim that the graph $G$ constructed above is CP hyperbolic in this case. 
Note that $\mbox{\rm VEL} (S_1, S_n) = \sum_{k=1}^n 1/a_k$ by Lemma~\ref{lemma1} and our construction, hence
\[
\mbox{\rm VEL}(S_1, \infty) = \lim_{n \to \infty}  \mbox{\rm VEL}(S_1, S_n) = \sum_{n=1}^\infty \frac{1}{a_n} < \infty.
\]
We conclude that $G$ is VEL hyperbolic, hence CP hyperbolic by Theorem~\ref{list}(d). 

Next let us assume $\sum 1/(a_n + a_{n+1})< \infty$, and we want to show that $G$ is transient in this case. But by the simple criterion of transience by T.~Lyons
\cite{LyT83} (cf.\ \cite[Theorem~2.11]{LP16} or \cite[Theorem~3.33]{Soa}), it suffices to show that there exists a unit flow from $v_0$ to infinity with finite energy.
For this purpose we define a \emph{water flow}, an antisymmetric function $\theta: V \times V \to \mathbb{R}$, as follows. 
Suppose $[v, w] \in E_n$ for some $v \in S_n$ and $w \in S_{n+1}$, $n \geq 1$.
Here $E_n$ is the set of edges connecting $S_n$ to $S_{n+1}$ as before. Let $Q_v$ be the square corresponding to $v$, which should be included in the ring $\mathcal{B}_n$
in our construction. Similarly let $Q_w \subset  \mathcal{B}_{n+1}$ be the square corresponding to $w$, and we define  the amount of water flowing from $v$ to $w$ as
\[
\theta(v, w) = - \theta(w, v) = \mbox{length}(Q_v \cap Q_w).
\]
Note that $\theta(v, w) >0$, because $w$ is a neighbor of $v$ and therefore $Q_v$ intersects $Q_w$ (in some finite length). Next for $v \in S_1$ we define
\[
\theta(v_0, v) = - \theta(v, v_0) = \frac{1}{a_1} = (\mbox{side length of the square corresponding to } v).
\]
Finally we let $\theta(v, w) =0$ if $v$ and $w$ are not neighbors, or both $v$ and $w$ belong to $S_n$ for some $n$. 

For $v \in V \setminus \{v_0\}$,  it is clear that the total amount of  water flowing into $v$ is the same as the total amount of water flowing out of $v$, both of which are 
the side length of $Q_v$. Moreover, the amount of water flowing out of $v_0$ is one and no water flows into $v_0$. Thus $\theta$ defines a flow function from $v_0$
to infinity, so we only need to show that $\theta$ has finite energy.

One can check either from \eqref{edgesize} or directly from the construction that for $n \geq 1$, $|E_n| = |S_n| + |S_{n+1}| = a_n + a_{n+1}$ 
in our case.  Moreover if $[v, w] \in E_n$, $n \geq 1$,
then because $|\theta(v, w)|$ is less than or equal to the minimum of the side lengths of $Q_v$ and $Q_w$, we have
\[
|\theta(v, w)| \leq \min \left\{ \frac{1}{a_n} , \frac{1}{a_{n+1}} \right\} = \frac{1}{\max \{ a_n, a_{n+1} \} } \leq \frac{2}{a_n + a_{n+1}}.
\]
Now since $\theta(e) = 0$ for $e \notin \bigcup_{n=0}^\infty E_n$, we have 
\begin{align*}
\sum_{e \in E} \theta(e)^2 & = \sum_{n=0}^\infty \sum_{e \in E_n} \theta (e)^2 \leq \frac{a_1}{(a_1)^2} + \sum_{n=1}^\infty \sum_{e \in E_n} \frac{4}{(a_n + a_{n+1})^2}\\
& =  \frac{1}{a_1} + \sum_{n=1}^\infty \frac{4}{a_n + a_{n+1}}< \infty.
\end{align*}
We conclude that $G$ is transient, and this completes the proof of Theorem~~\ref{T:S}.
\end{proof}

Suppose $G$ is a disk triangulation graph such that $\deg v \geq 6$ for every vertex $v \in V(G)$. If $k_n = n$ for all $n =0,1,2, \ldots$, then every vertex $v \in S_n$, $n \geq 1$,
must be of degree 6 except one vertex  of degree $7$. Note that in this case every $B_n$ is simply connected because $G$ has no cut locus, and we have
\begin{equation}\label{conj}
\sum_{n=1}^\infty \frac{1}{a_n} < \infty.
\end{equation}
 Does this guarantee CP hyperbolicity of  $G$?
 In general, if $G$ is a disk triangulation graph such that $\deg v \geq 6$ for all $v \in V$ and satisfies \eqref{conj}, does $G$ need to be CP hyperbolic?
 If not, under what conditions can we say that $G$ is CP hyperbolic?
 Note that the graph constructed in the proof of Theorem~\ref{T:S} does not answer for this question, since it may have vertices of degree less than 6. 
 
 \section{Proof of Theorem~\ref{T:LCP}}\label{S:LCP}
 Suppose $G=(V, E, F)$ is a disk triangulation graph associated with a layered circle packing determined by the sequence $(h_k, d_k)$. Let $v_0 \in V$ be the distinguished (center) vertex of $G$,
 and  let $S_n = S_n (v_0)$ and $B_n = B_n (v_0)$ be as before. Also let $k_n$ be the degree excess sequence. 
 
 Note that $k_0 = \deg v_0 -6$,  which is the same as $k_m$ for $m = 1,2, \ldots, h_1 -1$ because every vertex in $S_1 \cup S_2 \cup \cdots \cup S_{h_1 -1}$ is of degree $6$.
 But every vertex in $S_{h_1}$ is of degree $6 + d_1$, so we have
 \[
 k_{h_1} - k_0 = k_{h_1}- k_{h_1 -1} =  d_1 \cdot|S_{h_1}|.
 \]
 On the other hand, because $\deg v \geq 6$ for all $v \in V$ by our assumption, Theorem~\ref{degree6} implies  that 
 \[
 |S_{h_1}| = \sum_{j=0}^{h_1 -1} (k_j +6)=\sum_{j=0}^{h_1 -1} (k_0 +6) = h_1 (k_0 +6).
 \]
From similar computations we obtain
\[
k_{h_1 + h_2} - k_{h_1} = d_2 \cdot |S_{h_1 + h_2}| 
\]
and 
\[
|S_{h_1 + h_2}| - |S_{h_1}| = \sum_{j=h_1}^{h_1 + h_2 -1}  (k_{h_1}+6) = h_2 (k_{h_1}+6).
\]
Let $\theta_n = h_1 + h_2 + \cdots + h_n$, $\delta_n = (6 + k_{\theta_n})/(6+k_0) =(6+  k_{h_1 + \cdots + h_n})/(6 + k_0)$, and $c_n = |S_{\theta_n}|/ (6+k_0)$ for $n \geq 1$. 
We also define $\delta_0 =1$. Then by the same computations as above we get the following system of recurrence relations
\[ 
\begin{cases}
\delta_0 =1, c_1 = h_1, \\
\delta_n - \delta_{n-1} = d_n c_n \mbox{ for } n \geq 1, \\
c_n - c_{n-1} = h_n \delta_{n-1} \mbox{ for } n \geq 2.
\end{cases}
\] 
Therefore
\[
\delta_n = \delta_0 + \sum_{k=1}^n (\delta_k - \delta_{k-1}) = 1 + d_1 c_1 + d_2 c_2 + \cdots + d_n c_n,
\]
hence for $n \geq 2$ we have
\[
c_n = c_{n-1} + h_n \delta_{n-1} = c_{n-1} + h_n (1 + d_1 c_1 + d_2 c_2 + \cdots + d_{n-1} c_{n-1}).
\]

Theorem~\ref{T:LCP} asserts that $G$ is CP parabolic or hyperbolic according as the series
\begin{equation}\label{E:series2}
 \sum_{n=2}^\infty \frac{\ln h_n}{d_{n-1} c_{n-1}}
\end{equation}
diverges or converges, respectively, where $c_n$ is as defined above. Here we claim that convergence of the series \eqref{E:series2} is equivalent to convergence of
\begin{equation}\label{E:series}
 \sum_{n=2}^\infty \frac{\ln (h_n+1)}{d_{n-1} c_{n-1}}.
\end{equation}
Definitely divergence of \eqref{E:series2} implies divergence of \eqref{E:series}. Thus let us assume that  \eqref{E:series2} converges. Note that
 the series $\sum 1/c_n$ must be convergent because $c_n \geq 2 c_{n-1}$ for all $n$. Therefore,
 \begin{align*}
 \sum_{n=2}^\infty  \frac{\ln (h_n+1)}{d_{n-1} c_{n-1}} & = \sum_{h_n =1} \frac{\ln (h_n+1)}{d_{n-1} c_{n-1}} + \sum_{h_n >1} \frac{\ln (h_n+1)}{d_{n-1} c_{n-1}} \\
  & \leq  \sum_{n=2}^\infty \frac{\ln 2}{c_{n-1}} + \sum_{n=2}^\infty \frac{ 2 \ln h_n}{d_{n-1} c_{n-1}} < \infty,
 \end{align*}
 as desired.
 
Now let us prove the parabolic case. Suppose $1 \leq  l \leq h_{n}$. Then 
\begin{align*}
|S_{\theta_{n-1} + l}| & =  \sum_{j=0}^{\theta_{n-1}-1} (k_j +6) +  \sum_{j = \theta_{n-1}}^{\theta_{n-1} + l-1} (k_{\theta_{n-1}} + 6) \\
          & = (6 + k_0) ( c_{n-1} + l \cdot \delta_{n-1}) \\
          & = (6 + k_0) \left\{ c_{n-1}+ l \cdot (1 + d_1 c_1 + d_2 c_2 + \cdots + d_{n-1} c_{n-1}) \right\} \\
          & \leq (6+ k_0) \cdot l \cdot (3 d_{n-1} c_{n-1}),
\end{align*}
because 
\begin{align*}
c_{n-1} & = c_{n-2} + h_{n-1} (1 + d_1 c_1 + d_2 c_2 + \cdots + d_{n-2} c_{n-2}) \\
               & \geq 1 + d_1 c_1 + d_2 c_2 + \cdots + d_{n-2} c_{n-2}.
\end{align*}
Therefore we obtain
\[
\sum_{j=\theta_{n-1}+1}^{\theta_n}  \frac{1}{|S_j|} \geq \frac{1}{3 (6 + k_0)}\cdot \frac{1}{ d_{n-1} c_{n-1}} \sum_{l=1}^{h_n} \frac{1}{l} 
\geq \frac{1}{3 (6 + k_0)}\cdot \frac{\ln (h_n +1 )}{ d_{n-1} c_{n-1}},
\]
and
\[
\sum_{j=h_1 + 1}^\infty \frac{1}{|S_j|} = \sum_{n=2}^\infty\sum_{j=\theta_{n-1}+1}^{\theta_n}  \frac{1}{|S_j|} \geq \frac{1}{3 (6 + k_0)} \sum_{n=2}^\infty 
\frac{\ln (h_n +1 )}{ d_{n-1} c_{n-1}}.
\]
We conclude by Theorem~\ref{T:RoSull}, the Rodin-Sullivan theorem, that $G$ is CP parabolic if the series in \eqref{E:series}, or in \eqref{E:series2}, diverges.

The above arguments almost prove the hyperbolic case as well. Note that we have $|S_{\theta_{n-1} + l}|  \geq (6+k_0)\cdot l \cdot d_{n-1} c_{n-1}$, hence
\[
\sum_{j=h_1 + 1}^\infty \frac{1}{|S_j|} \leq \frac{1}{6+k_0}\cdot \sum_{n=2}^\infty  \frac{1}{ d_{n-1} c_{n-1}} \sum_{l=1}^{h_n} \frac{1}{l} 
\leq \frac{2}{6+k_0} \cdot \sum_{n=2}^\infty  \frac{\ln (h_n +1)}{ d_{n-1} c_{n-1}}.
\]
Therefore the hyperbolic part of Theorem~\ref{T:LCP} follows from \cite[Theorem~3.6]{DW05}, where Dennis and Williams proved that a layered circle packing is CP hyperbolic
if $\sum 1/ |S_n|$ converges. Unfortunately, however, the arguments in \cite{DW05} can be applied only when the sequence $d_n$ is bounded, so we need a different approach
for the hyperbolic case. Our strategy is to adopt Siders' method in \cite{Si98}, but we will compute vertex extremal distance instead of electric resistance.

 \begin{figure}[t]
\begin{center}
 \includegraphics{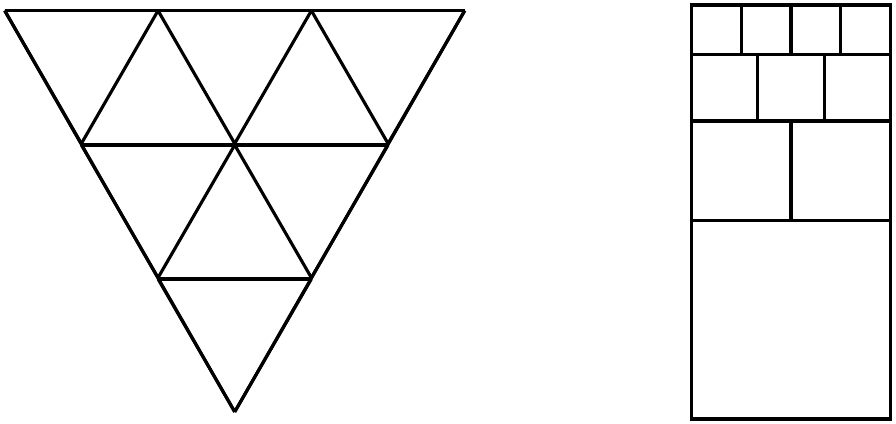}
 \caption{A triangular mesh of size $3$ (left), and the corresponding square tiling (right).}\label{F:mesh}
 \vspace{.5 cm}
  \includegraphics{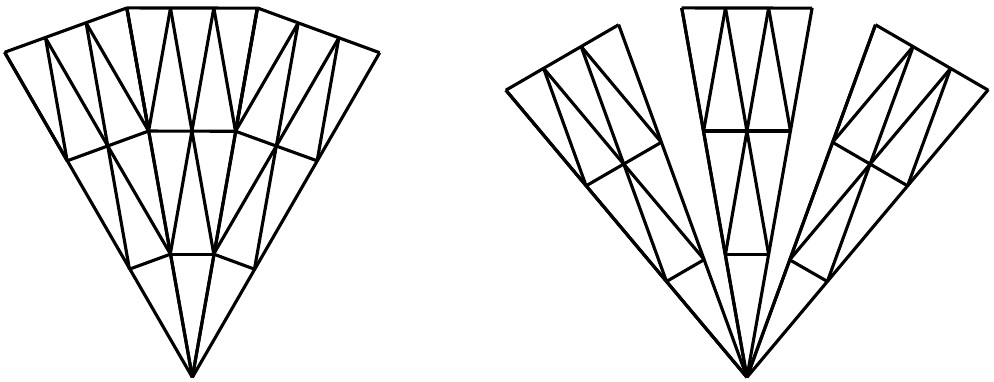}
 \caption{A 3-fold triangular mesh of size $3$ (left), and its separated triangular mesh (right).}\label{F:foldmesh}
\end{center}
\end{figure}
To adopt Siders' method precisely, we need to introduce some terminology. A \emph{triangular mesh} of size $n$, or an $n$-triangular mesh, is the graph shown
on the left of Figure~\ref{F:mesh}: it is the contact graph of the square tiling of a rectangle consisting  of $n+1$ layers such that the $k$-th layer of the tiling has $k$ squares of side
lengths $1/k$. Note that the vertex extremal distance between an apex(corner vertex) and the opposite side of an $n$-triangular mesh is $\sum_{k=1}^{n+1} 1/k$, as we discussed in 
Section~\ref{S:VEL}, and the vertex extremal distance between the 2nd layer(the set of neighbors of an apex) and the last layer(the opposite side of the apex) is $\sum_{k=2}^{n+1} 1/k$.
 A \emph{$d$-fold $n$-triangular mesh} is the graph obtained by pasting $d$ triangular meshes of size $n$ along sides so that they are pasted counterclockwise one
after another around a common vertex, the \emph{apex} (Figure~\ref{F:foldmesh}). 
Then by \emph{separating} a $d$-fold $n$-triangular mesh, we will get a graph consisting of $d$ triangular meshes of size $n$
which are connected only at the apex. Note that the hexagonal regular triangulation includes $5$-fold triangular meshes of every size, and $6$-fold triangular meshes with the first and last sides
are pasted; in this case we will say that the $n$-fold triangular mesh is \emph{closed}. 
 \begin{figure}[p]
\begin{center}
 \input{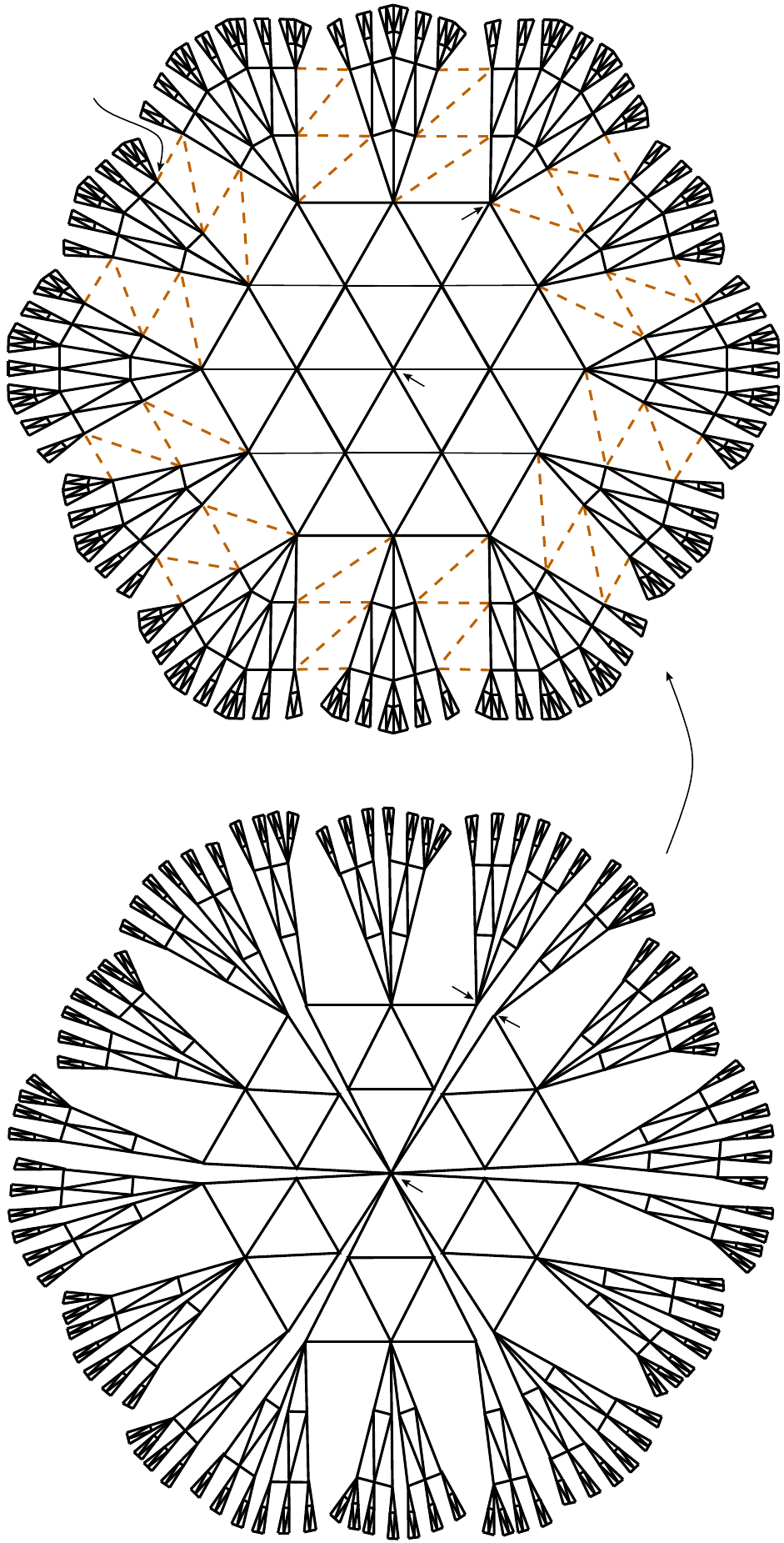_t}
 \caption{Graphs $A_3$ (top) and $\mathcal{A}_3$ (bottom) with $\deg v_0 = 6$, $h_1 = h_2 = h_3=2$, $d_1 =2$ and $d_2 =1$. In the course of obtaining $A_2$, 
 the edges removed from $B_{\theta_2}$ are shown in the top figure as dashed lines.}\label{F:A_n}
\end{center}
\end{figure}

The disk triangulation graph $G$ in consideration includes $B_{h_1}= B_{\theta_1}$, a $(k_0+6)$-fold closed  triangular mesh of size $h_1$. We then separate this triangular mesh 
and get a graph $\mathcal{A}_1$, and denote its apex by $w_0$. Also let $A_1 = B_{\theta_1}$, $\mathcal{T}_1$  the set of vertices in $\mathcal{A}_1$ 
whose combinatorial distance from $w_0$ is $h_1$, and $\phi_1 : \mathcal{A}_1 \to A_1$ the natural graph homomorphism such that $\phi_1(w_0) = v_0$ and 
$\phi_1(\mathcal{T}_1) = S_{h_1}$. 

We next observe that $G$ contains a subgraph, say $A_2$, such that if $v \in S_{h_1}$ has two parents (that is, if $v$ is of type I as defined in Section~\ref{S:T1}), 
then a $d_1$-fold triangular mesh of size $h_2$ is attached to $v$,
and if $v \in S_{h_1}$ has only one parent (that is, if $v$ is of type II), then a $(d_1+1)$-fold triangular mesh of size $h_2$ is attached to $v$. 
We also assume that $A_2$ includes only $A_1$ and the attached triangular meshes as shown in the graph on the top  of Figure~\ref{F:A_n}, 
so that $A_2$ should be  obtained from $B_{\theta_2}$ by deleting edges. Note that the vertex sets of $A_2$ and $B_{\theta_2}$ must be the same.

If $v \in S_{\theta_1} = S_{h_1}$ has two parents, then there is only one vertex $w \in \mathcal{T}_1$ such that $\phi_1(w) = v$. In this case we attach to $w$ 
a $d_1$-fold \emph{separated} triangular mesh of size $h_2$.  If $v$ has only one parent, then there are exactly two vertices 
$w, w'  \in \mathcal{T}_1$ such that $\phi_1(w)= \phi_1(w') =v$. Suppose $w$ precedes $w'$ counterclockwise around $w_0$. 
We then attach to $w$ a (1-fold) triangular mesh of size $h_2$, and to $w'$  a $d_1$-fold \emph{separated} triangular mesh of size $h_2$.
After attaching triangular meshes to all the vertices in $\mathcal{T}_1$ as described above, we obtain a new graph $\mathcal{A}_2$ 
and a graph homomorphism $\phi_2 : \mathcal{A}_2 \to A_2$
such that $\phi_2 |_{\mathcal{A}_1} = \phi_1$ and $\phi_2 (\mathcal{T}_2) = S_{\theta_2}$, where $\mathcal{T}_2$ is the set of vertices in $\mathcal{A}_2$
whose combinatorial distance from $w_0$ is $\theta_2 = h_1+h_2$. Also note that the number of the second generation (1-fold) triangular meshes in $\mathcal{A}_2$ 
is $$d_1 \cdot |S_{\theta_1}|+ (k_0 +6) = (k_0+6)( 1+ d_1 c_1)= (k_0+6) \delta_1,$$ and that every type I vertex in $S_{\theta_2}$ has only one preimage via $\phi_2$, while
each of type II vertices in $S_{\theta_2}$ has either one preimage or two preimages.
 
 The next step is similar to the previous step, but there is a slight difference. We first observe that $G$ contains a subgraph $A_3$ with $A_2 \subset A_3 \subset B_{\theta_3}$
 such that if $v \in S_{\theta_2}$ is of type I then a $d_2$-fold triangular mesh of size $h_3$ is attached to $v$,
and if $v \in S_{\theta_2}$ is of type II then a $(d_2+1)$-fold triangular mesh of size $h_3$ is attached to $v$. Definitely we want $A_3$ to include only $A_2$ and
the attached triangular meshes, so that $A_3$ should be obtained from $B_{\theta_3}$ by deleting edges. Now if $v \in S_{\theta_2}$ is of type I,  there is only one
$w \in \mathcal{T}_2$ such that $\phi_2(w) = v$, hence we attach to $w$ a $d_2$-fold separated triangular mesh of size $h_3$. If $v \in S_{\theta_2}$ is of type II with
only one preimage $w\in \mathcal{T}_2$ such that $\phi_2(w) = v$, then we attach to $w$ a $(d_2+1)$-fold separated mesh. If $v \in S_{\theta_2}$ is of type II and there are
two elements of $\mathcal{T}_2$, say $w$ and $w'$, such that $\phi_2(w) = \phi_2 (w')= v$, then we attach to $w$ a mesh and to $w'$ a $d_2$-fold separated mesh,
assuming that $w$ precedes $w'$  counterclockwise around $w_0$. After attaching triangular meshes to all the vertices in $\mathcal{T}_2$ as described above, 
we obtain $\mathcal{A}_3$ and a graph homomorphism $\phi_3 : \mathcal{A}_3 \to A_3$
such that $\phi_3 |_{\mathcal{A}_2} = \phi_2$ and $\phi_3 (\mathcal{T}_3) = S_{\theta_3}$, where $\mathcal{T}_3$ is  defined similar to $\mathcal{T}_1$ and $\mathcal{T}_2$.
Then every type I vertex in $S_{\theta_3}$ has only one preimage via $\phi_3$ and each of type II vertices in $S_{\theta_3}$ has either one preimage or two preimages as before,
and the number of the third generation triangular meshes in $\mathcal{A}_3$ is 
 \[
 |S_{\theta_2+1}| -  |b B_{\theta_2}|  = |S_{\theta_2+1}| - |S_{\theta_2}| = (k_0+6)\delta_{2} 
 = (k_0+6)(1+ d_1 c_1 + d_{2} c_{2}).
 \]
 
 We repeat the above process to get a sequence of graphs $\mathcal{A}_1 \subset \mathcal{A}_2 \subset \mathcal{A}_3 \subset \cdots$ 
 and a sequence of graph homomorphisms $\phi_n : \mathcal{A}_n \to A_n \subset B_{\theta_n}$ such that
 $\phi_n |_{\mathcal{A}_{n-1}} = \phi_{n-1}$ and $\phi_ n (\mathcal{T}_n) = S_{\theta_n}$, where $A_n$ and $\mathcal{T}_n$ are defined similarly as in the previous steps.
 Let $\mathcal{A} = \bigcup_{n=1}^\infty \mathcal{A}_n$ and $A= \bigcup_{n=1}^\infty A_n$,
 and define $\phi : \mathcal{A} \to A$ by $\phi |_{\mathcal{A}_n} = \phi_n$. Note that $V(A) = V(G)$, and every $v \in V(A) = V(G)$ has at most two preimages via $\phi$.
Thus Theorem~\ref{list} implies that VEL hyperbolicity of $\mathcal{A}$ implies CP hyperbolicity as well as VEL hyperbolicity of $G$. Also one can check that
 the number of  $(n+1)$-th generation triangular meshes in  $\mathcal{A}$ is 
 \begin{equation}\label{E:generation}
 |S_{\theta_n +1}| -  |b B_{\theta_n}|  =  (k_0+6)(1+ d_1 c_1 + \cdots + d_{n} c_{n}).
 \end{equation}

 Let $\mu_1$ be the extremal metric for $\mbox{\rm VEL}(w_0, \mathcal{T}_1)$ such that $\mu_1 (w_0) =1$, and $T$ be a component of $\mathcal{A}_1 \setminus \{ w_0 \}$.
 Also let $X$ be the set of neighbors of $w_0$ belonging to $T$ and $Y = V(T) \cap \mathcal{T}_1$. Then $T$ is a $h_1$-triangular mesh with the apex removed, hence
  we have $|X|=2$, $|Y|= h_1+1$, and by Lemma~\ref{lemma1}
 \[
 \lambda_1 := \mbox{\rm VEL}(X, Y) = \frac{1}{2} + \frac{1}{3} + \cdots + \frac{1}{h_1+1} \leq \ln (h_1 +1).
 \]
Because every component of $\mathcal{A}_1 \setminus \{ w_0 \}$ is isomorphic to $T$, it is not difficult to see that $\mu_1 |_T$ is an extremal metric for 
$\mbox{\rm VEL}(X, Y)$. Since this argument can be applied to every component of $\mathcal{A}_1 \setminus \{ w_0 \}$, 
 Lemma~\ref{lemma1} and the fact that $\mu_1$ is an extremal metric for $\mbox{\rm VEL}(w_0, \mathcal{T}_1)$ imply that there exists $c>0$ such that
$\mu_1 (w) = c/(m+1)$  for every $w \in V(\mathcal{A}_1) \setminus \{w_0\}$ with $d_{\mathcal{A}_1} (w_0, w) = m \geq 1$.  
Note that 
\[
\mbox{\rm area}_{\mu_1} (T) := \mbox{\rm area}_{\mu_1} (V(T))  = c^2 \lambda_1. 
\]
Then because of the fact that $\mu_1$ is the extremal metric for $\mbox{\rm VEL}(w_0, \mathcal{T}_1)$ again,
the positive real number $c$ must  maximize the function
\[
f(x) = \frac{(1+ \lambda_1 x)^2}{1+ M \cdot \lambda_1 x^2}
\]
for $x>0$, where $M = (k_0 +6)$ is the number of triangular meshes in the first generation. Moreover, the maximum should be equal to $\mbox{\rm VEL}(w_0, \mathcal{T}_1)$.
Now one can easily compute that $c = 1/M$ and  $\mbox{\rm VEL}(w_0, \mathcal{T}_1) = 1 + \lambda_1/M = 1+ \lambda_1/(k_0 + 6)$.

Next let $\mu_1$ and $\mu_2$ be the extremal metrics for  $\mbox{\rm VEL}(w_0, \mathcal{T}_1)$ and  $\mbox{\rm VEL}(w_0, \mathcal{T}_2)$, respectively,
but this time we normalize them so that $\mbox{\rm area}_{\mu_1}(\mathcal{A}_1) = \mbox{\rm area}_{\mu_2}(\mathcal{A}_1) =1$.
Note that $\mu_1 (w) = \mu_1 (w')$ if $d_{\mathcal{A}_1} (w_0, w) =d_{\mathcal{A}_1} (w_0, w') $ as we observed above. Therefore for every geodesic  path $\eta$ from $w_0$
to $\mathcal{T}_1$, we must have $$L_{\mu_1} (\eta)^2 = \mbox{\rm VEL}(w_0, \mathcal{T}_1) = 1+ \lambda_1/(k_0 + 6)$$
because $\mbox{\rm area}_{\mu_1}(\mathcal{A}_1)=1$.
Moreover, because at least one $h_2$-triangular mesh is attached to each vertex in $\mathcal{T}_1$ and 
all the components of $\mathcal{A}_2 \setminus \mathcal{A}_1$ are isomorphic to each others,  one can check that we must have $\mu_2 |_{\mathcal{A}_1} = \mu_1$.
In fact, if 
\[
\inf_{\eta \in \Gamma(w_0, \mathcal{T}_1)} L_{\mu_2}(\eta) =L_{\mu_2} \bigl(\Gamma(w_0, \mathcal{T}_1) \bigr) 
< L_{\mu_1} \bigl(\Gamma(w_0, \mathcal{T}_1) \bigr)=\mbox{\rm VEL}(w_0, \mathcal{T}_1)^{1/2},
\]
 then clearly we have 
\[
 \frac{L_{\mu_2} \bigl(\Gamma(w_0, \mathcal{T}_2) \bigr)^2}{\mbox{\rm area}_{\mu_2} (\mathcal{A}_2)} <  
 \frac{L_{\mu'}\bigl (\Gamma(w_0, \mathcal{T}_2) \bigr)^2}{\mbox{\rm area}_{\mu'} (\mathcal{A}_2)}
 \]
 for the metric 
 \[
 \mu' (v) = \begin{cases} \mu_1 (v) & \mbox{ if } v \in V(\mathcal{A}_1) \\
                                           \mu_2 (v) & \mbox{ if } v \in V(\mathcal{A}_2) \setminus  V(\mathcal{A}_1)
                   \end{cases},
 \]
 because ${\mbox{\rm area}_{\mu_2} (\mathcal{A}_2)}= \mbox{\rm area}_{\mu'} (\mathcal{A}_2)$. This contradicts the assumption that $\mu_2$ is an extremal metric
 for $\mbox{\rm VEL}(w_0, \mathcal{T}_2)$, hence we have $L_{\mu_2} \bigl(\Gamma(w_0, \mathcal{T}_1) \bigr) = \mbox{\rm VEL}(w_0, \mathcal{T}_1)^{1/2}$.
 Now the uniqueness part of Theorem~\ref{list}(b)  implies that $\mu_2 |_{\mathcal{A}_1} = \mu_1$. 
 
Suppose $T$ is a component of $\mathcal{A}_2 \setminus \mathcal{A}_1$,  $X$ is the set of vertices in $T$ that have neighbors in $\mathcal{T}_1$, 
and $Y= V(T) \cap \mathcal{T}_2$. 
Note that every geodesic path from $w_0$ to $\mathcal{T}_1$ has the same $\mu_2$-length because $\mu_2|_{\mathcal{A}_1} = \mu_1$. Moreover, we know that 
 at least one $h_2$-triangular mesh is attached to each vertex in $\mathcal{T}_1$ and every component of $\mathcal{A}_2 \setminus \mathcal{A}_1$ is isomorphic to $T$.
 Thus one can check without difficulties that  $\mu_2 |_T$ must be an extremal metric for $\mbox{\rm VEL}(X, Y)$. Then because $T$ could be any component of 
 $\mathcal{A}_2 \setminus \mathcal{A}_1$, we have by Lemma~\ref{lemma1}  that
  \[
 \lambda_2 := \mbox{\rm VEL}(X, Y) = \frac{1}{2} + \frac{1}{3} + \cdots + \frac{1}{h_2+1} \leq \ln (h_2 +1),
 \]
 and $\mu_2 (w) = c/(m+1)$ if $d_{\mathcal{A}_2} (w_0, w) = h_1 + m$, where $c$ is a fixed positive constant and $m \geq 1$. In particular, this implies that 
 $\mu_2 (w) = \mu_2 (w')$ if $w$ and $w'$ have the same combinatorial distance from $w_0$.
 
Let $M= (k_0+6)( 1+d_1 c_1)$, the number of triangular meshes in the second generation, 
 and $\lambda$ be the positive number such that $\lambda^2 = \mbox{\rm VEL}(w_0, \mathcal{T}_1) = 1+ \lambda_1/(k_0 + 6)$. 
 Then $c$ must be  the positive number maximizing 
\[
f(x) = \frac{(\lambda+ \lambda_2 x)^2}{1+ M \cdot \lambda_2 x^2}
\]
for $x>0$, so we obtain $c = 1/M \lambda$ and the maximum is
\begin{align*}
\mbox{\rm VEL}(w_0, \mathcal{T}_2) & = \lambda^2 + \frac{\lambda_2}{M} =  1+ \frac{\lambda_1}{k_0 + 6} + \frac{\lambda_2}{(k_0+6) (1+d_1 c_1)} \\
& \leq 1+ \frac{\ln (h_1 +1) }{k_0 + 6} + \frac{\ln(h_2 +1)}{(k_0+6) d_1 c_1}.
\end{align*}

We repeat the above argument inductively, and note that the number of triangular meshes in the $k$-th generation is given in \eqref{E:generation}.
Thus we  obtain
\begin{equation}\label{vel}
\begin{aligned}
\mbox{\rm VEL}(w_0, \mathcal{T}_n) & = 1+ \frac{\lambda_1}{k_0 + 6} + \frac{1}{k_0 +6} \sum_{k=2}^{n} \frac{\lambda_k}{1+ d_1 c_1 + \cdots + d_{k-1} c_{k-1}} \\
& \leq 1+ \frac{\ln (h_1 +1) }{k_0 + 6} + \frac{1}{k_0 +6} \sum_{k=2}^{n} \frac{\ln (h_k+1)}{d_{k-1} c_{k-1}}
\end{aligned}
\end{equation}
for $n \geq 2$.
 Since $\mbox{\rm VEL}(w_0, \infty)= \lim_{n \to \infty} \mbox{\rm VEL}(w_0, \mathcal{T}_n)$, we conclude that $\mathcal{A}$ is VEL hyperbolic,
hence $G$ is CP hyperbolic, if the series in \eqref{E:series}, or in \eqref{E:series2}, converges. This completes the proof of Theorem~\ref{T:LCP}.

\begin{remark}\label{FR1}
Though we proved the parabolic part of Theorem~\ref{T:LCP} using the Rodin-Sullivan theorem, it is also possible to prove it by computing  $\mbox{\rm VEL}(v_0, \infty)$. 
To see it, let us assume that the series \eqref{E:series} diverges. Then because $1+ d_1 c_1 + \cdots  + d_n c_n \leq 2 d_n c_n$ 
and $2 \lambda_n \geq  \ln (h_n +1)$, where $\lambda_n$ is as before, we see from \eqref{vel} that $\mbox{\rm VEL}(w_0, \mathcal{T}_n) \to \infty$ because
\begin{align*}
\mbox{\rm VEL}(w_0, \mathcal{T}_n) & = 1+ \frac{\lambda_1}{k_0 + 6} + \frac{1}{k_0 +6} \sum_{k=2}^{n} \frac{\lambda_k}{1+ d_1 c_1 + \cdots + d_{k-1} c_{k-1}}\\
& \geq 1+ \frac{\lambda_1}{k_0 + 6} + \frac{1}{4(k_0 +6)} \sum_{k=2}^{n}   \frac{\ln (h_k+1)}{d_{k-1} c_{k-1}}.
\end{align*}
Now for each $n \geq 1$, let $\mu_n$ be the extremal metric for $\mbox{\rm VEL}(w_0, \mathcal{T}_n)$ 
such that ${\mbox{\rm area}_{\mu_n} (\mathcal{A}_n)}=1$, and define $\mu_n (w) = 0$ for $w \in V(\mathcal{A}) \setminus V(\mathcal{A}_n)$.
Choose a subsequence $n_k$ of natural numbers such that $\mbox{\rm VEL}(w_0, \mathcal{T}_{n_k}) \geq 2^k$, and define
$\mu (w) = \sum_{k=1}^\infty \mu_{n_k} (w)/2^{k}$.
Note that $\mu$ is admissible because
\begin{align*}
0 & < \mbox{\rm area}_\mu (\mathcal{A}) = \sum_{w \in V(\mathcal{A})} \mu(w)^2  = \sum_{w \in V(\mathcal{A})} \left( \sum_{k=1}^\infty \frac{\mu_{n_k} (w)}{2^{k}} \right)^2 \\
& \leq \sum_{w \in V(\mathcal{A})}\left( \sum_{k=1}^\infty \frac{1}{2^k} \cdot \sum_{k=1}^\infty \frac{\mu_{n_k} (w)^2 }{2^k} \right) 
= \sum_{w \in V(\mathcal{A})}\left( \sum_{k=1}^\infty \frac{\mu_{n_k} (w)^2 }{2^k} \right) \\
& = \sum_{k=1}^\infty  \sum_{w \in V(\mathcal{A})}\frac{\mu_{n_k} (w)^2 }{2^k} = \sum_{k=1}^\infty \frac{\mbox{\rm area}_{\mu_k} (\mathcal{A}) }{2^k} 
= \sum_{k=1}^\infty  \frac{1}{2^k} = 1.
\end{align*}

We can push down the metric $\mu$ via $\phi : \mathcal{A} \to G$ because we have $\mu(w) = \mu(w')$ if $d_{\mathcal{A}} (w_0, w) =  d_{\mathcal{A}} (w_0, w')$. 
That is, if we  define $\mu' : V(G) \to [0, \infty)$ so that $\mu' \bigl( \phi (w) \bigr) = \mu (w)$, then the metric $\mu'$ is well-defined because 
 $\phi$ is a surjective graph homomorphism satisfying $d_{\mathcal{A}} (w_0, w) = d_{G} (v_0, \phi(w) )$.
Note that $\mu'$ is admissible because $\mbox{\rm area}_{\mu} (\mathcal{A}) \geq  \mbox{\rm area}_{\mu'} (G)>0$, and one can check that
\[
 \infty = L_{\mu}\bigl (\Gamma(w_0, \infty) \bigr)=L_{\mu'}\bigl (\Gamma(v_0, \infty) \bigr).
 \]
Therefore $\mbox{\rm VEL}(v_0, \infty) = \infty$, and we conclude that $G$ is VEL parabolic, hence CP parabolic by Theorem~\ref{list}. 
\end{remark}

\section*{Acknowledgement}
The author thanks to Steffen Rohde for invaluable comments, and Ken Stephenson for developing and making available  the CirclePack software,
by which Figures~\ref{F:cp} and \ref{F:layered} were generated. 
The author especially thanks to the anonymous referee for the detailed and thoughtful review, and highly appreciates to inform him the recurrence condition \eqref{Reci2}
and  the corresponding example in Theorem~\ref{T:S}, which significantly contributed to improving the quality of the paper.
This research was supported by Basic Science Research Program through the National Research Foundation of Korea(NRF) funded by the Ministry of Education(NRF-2017R1D1A1B03034665).

\bibliographystyle{plain}
\bibliography{criteria}

\end{document}